\newtheorem{theorem}{Theorem}[section]
\newtheorem{corollary}[theorem]{Corollary}
\newtheorem{lemma}[theorem]{Lemma}
\newtheorem{definition}[theorem]{Definition}
\newtheorem{proposition}[theorem]{Proposition}
\newtheorem{remark}[theorem]{Remark}
\numberwithin{theorem}{section}
\newtheorem{acknowledgement}{Acknowledgement}
\begin{document}
\title[Extremal marginals of a local $\mathcal{CCP}$-map ]{Extremal
marginals of an unbounded local completely positive and local completely
contractive map }
\author{Maria Joi\c{t}a}
\address{Department of Mathematics, Faculty of Applied Sciences, University
Politehnica of Bucharest, 313 Spl. Independentei, 060042, Bucharest, Romania
and \\
Simion Stoilow Institute of Mathematics of the Romanian Academy, P.O. Box
1-764, 014700, Bucharest, Romania}
\email{mjoita@fmi.unibuc.ro and maria.joita@mathem.pub.ro}
\urladdr{http://sites.google.com/a/g.unibuc.ro/maria-joita/}
\subjclass[2000]{ 46L05}
\thanks{This work was partially supported by a grant of the Ministry of
Research, Innovation and Digitization, CNCS/CCCDI--UEFISCDI, project number
PN-III-P4-ID-PCE-2020-0458, within PNCDI III}
\keywords{locally $C^{\ast }$-algebras, quantized domain, local completely
positive maps, local completely contractive maps, extremal local completely
positive maps }

\begin{abstract}
In this paper we consider the unbounded local completely positive and local
completely contractive maps on maximal tensor product of unital locally $%
C^{\ast }$-algebras and discuss on extremal points of certain convex subsets
in the set of such maps.
\end{abstract}

\maketitle

\section{Introduction}

Locally \ $C^{\ast }$-algebras are generalizations of $C^{\ast }$-algebras,
the topology on a locally $C^{\ast }$-algebra is defined by an upward
directed family of $C^{\ast }$-seminorms. The concrete models for locally $%
C^{\ast }$-algebras are $\ast $-algebras of unbounded linear operators on a
Hilbert space. In the literature, the locally $C^{\ast }$-algebras are
studied under different names like pro-$C^{\ast }$-algebras (D. Voiculescu
\cite{V}, N.C. Philips\ \cite{Ph}), $LMC^{\ast }$-algebras ( K. Schm\"{u}%
dgen \cite{S}), $b^{\ast }$-algebras (C. Apostol \cite{A}) and multinormed $%
C^{\ast }$-algebras (A. Dosiev \cite{D1}). The term locally \ $C^{\ast }$%
-algebra is due to A. Inoue \cite{I}.

A locally $C^{\ast }$-algebra is a complete Hausdorff complex topological $%
\ast $-algebra $\mathcal{A}$ whose topology is determined by an upward
filtered family $\{p_{\lambda }\}_{\lambda \in \Lambda }\ $of $C^{\ast }$%
-seminorms defined on $\mathcal{A}$. An element $a\in \mathcal{A}$ is
positive if there exists $b\in \mathcal{A}$ such that $a=b^{\ast }b$ and it
is local positive if there exist $c,d\in \mathcal{A}$ and $\lambda \in
\Lambda $ such that $a=c^{\ast }c+d$ and $p_{\lambda }\left( d\right) =0$.
Thus, the notion of (local) completely positive maps appeared naturally
while studying linear maps between locally $C^{\ast }$-algebras. A quantized domain in a Hilbert space $\mathcal{H}$ is a triple $\{%
\mathcal{H};\mathcal{E};\mathcal{D}_{\mathcal{E}}\}$, where $\mathcal{E}=\{%
\mathcal{H}_{\iota };\iota \in \Upsilon \}$ is an upward filtered family of
closed subspaces such that the union space $\mathcal{D}_{\mathcal{E}%
}=\bigcup\limits_{\iota \in \Upsilon }\mathcal{H}_{\iota }$ is dense in $%
\mathcal{H\ }$\cite{D1}. The collection of all linear operators $T:\mathcal{D}%
_{\mathcal{E}}\rightarrow \mathcal{D}_{\mathcal{E}}$ such that $T(\mathcal{H}%
_{\iota })\subseteq \mathcal{H}_{\iota },T(\mathcal{H}_{\iota }^{\bot }\cap 
\mathcal{D}_{\mathcal{E}})\subseteq \mathcal{H}_{\iota }^{\bot }\cap 
\mathcal{D}_{\mathcal{E}}$ and $\left. T\right\vert _{\mathcal{H}_{\iota
}}\in B(\mathcal{H}_{\iota })$ for all $\iota \in \Upsilon ,\ $denoted by $%
C^{\ast }(\mathcal{D}_{\mathcal{E}}),$ has a structure of locally $C^{\ast }$%
-algebra\ with the topology determined by the family of $C^{\ast }$-seminorms  $%
\{\left\Vert \cdot \right\Vert _{\iota }\}_{\iota \in \Upsilon },$ where $%
\left\Vert T\right\Vert _{\iota }=\left\Vert \left. T\right\vert _{\mathcal{H%
}_{\iota }}\right\Vert .$
In \cite%
{MJ3}, we showed that an unbounded map is local completely positive if and
only if it is continuous and completely positive. The structure of the
unbounded local completely positive and local completely contractive maps on
unital locally $C^{\ast }$-algebras is described in \cite{D1}. More exactly,
Dosiev proved a Stinespring type theorem for unbounded local $\mathcal{CPCC}$
(completely positive and completely contractive) maps on unital locally $%
C^{\ast }$-algebras. Bhat, Ghatak and Pamula \cite{BGK} introduced a partial
order relation on the set of all unbounded local $\mathcal{CPCC}$-maps on a
unital locally $C^{\ast }$-algebra $\mathcal{A}$ and described this relation
in terms of their minimal Stinespring dilations. In \cite{MJ1}, we
determined the structure of unbounded local $\mathcal{CPCC}$-maps on unital
locally $C^{\ast }$-algebras which preserve the local orthogonality and in
\cite{MJ3}, we proved some factorization properties for unbounded local
positive maps. In \cite{MJ2}, we proved a local convex version of
Wittstock's extension theorem and a Stinespring type theorem for unbounded
local $\mathcal{CC}$ (completely contractive) maps. In this paper, we
consider the unbounded local $\mathcal{CPCC}$-maps on maximal tensor product
of unital locally $C^{\ast }$-algebras and discuss on extremal points of
certain convex subsets in the set of such maps. In \cite{HHP}, Haapasalo, Heinosaari and Pellonp\"{a}\"{a} considered
bounded $\mathcal{CP\ }$(completely positive) maps on tensor product of von
Neumann algebras and showed that if one of the marginal maps of such map is
an extremal point, then the marginal maps uniquely determine the map. We
extend the results of \cite{HHP} to unbounded local $\mathcal{CPCC}$-maps on
maximal tensor product of unital locally $C^{\ast }$-algebras.

The paper is organized as follows. In Section $2$, we gather some basic
facts on locally $C^{\ast }$-algebras, concrete models for locally $C^{\ast }
$-algebras and unbounded local $\mathcal{CPCC}$-maps that are needed for
understanding the main results of the paper. Section $3$ is devoted to the
characterization of the extremal elements in certain convex subsets of $%
\mathcal{CPCC}_{\text{loc}}(\mathcal{A},C^{\ast }(\mathcal{D}_{\mathcal{E}}))
$. In Section $4$, we show that two unital unbounded local $\mathcal{CPCC}$%
-maps on the unital locally $C^{\ast }$-algebras $\mathcal{A}$ and $\mathcal{B}$%
, whose ranges commute, induce an unbounded local $\mathcal{CPCC}$-map an
the maximal tensor product $\mathcal{A\otimes }_{\max }\mathcal{B}$ of $%
\mathcal{A}$ and $\mathcal{B}$ (Proposition \ref{tmax}). Section $5$ is
devoted to marginal maps and joint maps. The structure of an unbounded local
$\mathcal{CPCC}$-map on the maximal tensor product of locally $C^{\ast }$%
-algebras in terms of a minimal Stinespring dilation of one of its marginal
map is given in Theorem \ref{Help}. As in the case of bounded $\mathcal{CP}$%
-maps, we show that if one of the marginal maps is an extremal point, then
the joint map is unique (Theorem \ref{C} $(1)$) and when both marginal maps
are extremal points, then the unique joint map is extremal (Theorem \ref{C} $%
(2)$).

\section{Preliminaries}

\paragraph{\textbf{Locally }$C^{\ast }$\textbf{-algebras}}

A \textit{locally }$C^{\ast }$\textit{-algebra }is a complete Hausdorff
complex topological $\ast $-algebra $\mathcal{A}$ whose topology is
determined by an upward filtered family $\{p_{\lambda }\}_{\lambda \in
\Lambda }\ $of $C^{\ast }$-seminorms defined on $\mathcal{A}$.

An element $a\in \mathcal{A}$ is \textit{bounded} if $\sup \{p_{\lambda
}\left( a\right) ;\lambda \in \Lambda \}<\infty .$ Then $b\left( \mathcal{A}%
\right) =\{a\in \mathcal{A};\left\Vert a\right\Vert _{\infty }:=\sup
\{p_{\lambda }\left( a\right) ;\lambda \in \Lambda \}<\infty \}\ $is a $%
C^{\ast }$-algebra with respect to the $C^{\ast }$-norm $\left\Vert \cdot
\right\Vert _{\infty }$. Moreover, $b\left( \mathcal{A}\right) $ is dense in
$\mathcal{A}$ \cite[Proposition 1.11]{Ph}$.$

We see that $\mathcal{A}$ can be realized as a projective limit of an
inverse family of $C^{\ast }$-algebras as follows: For each $\lambda \in
\Lambda $, let $\mathcal{I}_{\lambda }=\{a\in \mathcal{A};p_{\lambda }\left(
a\right) =0\}$. Clearly, $\mathcal{I}_{\lambda }$ is a closed two sided $%
\ast $-ideal in $\mathcal{A}$ and $\mathcal{A}_{\lambda }=\mathcal{A}/%
\mathcal{I}_{\lambda }$ is a $C^{\ast }$-algebra with respect to the norm
induced by $p_{\lambda }$ \cite[Corollary 1.12]{Ph}. The canonical quotient $%
\ast $-morphism from $\mathcal{A\ }$to $\mathcal{A}_{\lambda }$ is denoted
by $\pi _{\lambda }^{\mathcal{A}}$. For each $\lambda _{1},\lambda _{2}\in
\Lambda $ with $\lambda _{1}\leq \lambda _{2}$, there is a canonical
surjective $\ast $-morphism $\pi _{\lambda _{2}\lambda _{1}}^{\mathcal{A}}:$
$\mathcal{A}_{\lambda _{2}}\rightarrow \mathcal{A}_{\lambda _{1}}$ given by $%
\pi _{\lambda _{2}\lambda _{1}}^{\mathcal{A}}\left( a+\mathcal{I}_{\lambda
_{2}}\right) =a+\mathcal{I}_{\lambda _{1}},$ $a\in \mathcal{A}$. Then, $\{%
\mathcal{A}_{\lambda },\pi _{\lambda _{2}\lambda _{1}}^{\mathcal{A}}\}$\
forms an inverse system of $C^{\ast }$-algebras, since $\pi _{\lambda _{1}}^{%
\mathcal{A}}=$ $\pi _{\lambda _{2}\lambda _{1}}^{\mathcal{A}}\circ \pi
_{\lambda _{2}}^{\mathcal{A}}$ whenever $\lambda _{1}\leq \lambda _{2}$. The
projective limit%
\begin{equation*}
\lim\limits_{\underset{\lambda }{\leftarrow }}\mathcal{A}_{\lambda
}=\{\left( a_{\lambda }\right) _{\lambda \in \Lambda }\in
\prod\limits_{\lambda \in \Lambda }\mathcal{A}_{\lambda };\pi _{\lambda
_{2}\lambda _{1}}^{\mathcal{A}}\left( a_{\lambda _{2}}\right) =a_{\lambda
_{1}}\text{ whenever }\lambda _{1}\leq \lambda _{2},\lambda _{1},\lambda
_{2}\in \Lambda \}
\end{equation*}%
of the inverse system of $C^{\ast }$-algebras $\{\mathcal{A}_{\lambda },\pi
_{\lambda _{2}\lambda _{1}}^{\mathcal{A}}\}$ is a locally $C^{\ast }$%
-algebra that can be identified with $\mathcal{A}$ by the map $a\mapsto
\left( \pi _{\lambda }^{\mathcal{A}}\left( a\right) \right) _{\lambda \in
\Lambda }.$

\paragraph{\textbf{Positive and local positive elements}}

Let $\mathcal{A}$ be a locally $C^{\ast }$-algebra with the topology defined
by the family of $C^{\ast }$-seminorms $\left\{ p_{\lambda }\right\}
_{\lambda \in \Lambda }.$

An element $a\in \mathcal{A}$ is \textit{self-adjoint} if $a^{\ast }=a$ and
it is\textit{\ positive} if $a=b^{\ast }b$ for some $b\in \mathcal{A}.$

An element $a\in \mathcal{A}$ is called \textit{local self-adjoint} if $%
a=a^{\ast }+c$, where $c\in \mathcal{A}$ such that $p_{\lambda }\left(
c\right) =0$ for some $\lambda \in \Lambda $, and we call $a$ as $\lambda $%
-self-adjoint, and \textit{local positive} if $a=b^{\ast }b+c$, where $%
b,c\in $ $\mathcal{A}$ such that $p_{\lambda }\left( c\right) =0\ $ for some
$\lambda \in \Lambda $, we call $a$ as $\lambda $-positive and note $a\geq
_{\lambda }0$. We write $a=_{\lambda }0$ whenever $p_{\lambda }\left(
a\right) =0$. Note that $a\in \mathcal{A}$ is local self-adjoint if and only
if there is $\lambda \in \Lambda $ such that $\pi _{\lambda }^{\mathcal{A}%
}\left( a\right) $ is self adjoint in $\mathcal{A}_{\lambda }$ and $a\in
\mathcal{A}$ is local positive if and only if there is $\lambda \in \Lambda $
such that $\pi _{\lambda }^{\mathcal{A}}\left( a\right) $ is positive in $%
\mathcal{A}_{\lambda }.$

\paragraph{\textbf{Quantized domains}}

Throughout the paper, $\mathcal{H}$ is a complex Hilbert space and $B(%
\mathcal{H})$ is the algebra of all bounded linear operators on the Hilbert
space $\mathcal{H}$.

Let $(\Upsilon ,\leq )$ be a directed poset. A \textit{quantized domain }in
a Hilbert space $\mathcal{H}$ is a triple $\{\mathcal{H},\mathcal{E},%
\mathcal{D}_{\mathcal{E}}\}$, where $\mathcal{E}=\{\mathcal{H}_{\iota
};\iota \in \Upsilon \}$ is an upward filtered family of closed subspaces
such that the union space $\mathcal{D}_{\mathcal{E}}=\bigcup\limits_{\iota
\in \Upsilon }\mathcal{H}_{\iota }$ is dense in $\mathcal{H\ }$\cite{D1}.

A quantized family $\mathcal{E}=\{\mathcal{H}_{\iota };\iota \in \Upsilon \}$
determines an upward filtered family $\{P_{\iota };\iota \in \Upsilon \}$ of
projections in $B(\mathcal{H})$, where $P_{\iota }$ is a projection onto $%
\mathcal{H}_{\iota }$.

We say that a quantized domain $\mathcal{F}=\{\mathcal{K}_{\iota };\iota \in
\Upsilon \}$ \ of $\mathcal{H}$ with its union space $\mathcal{D}_{\mathcal{F%
}}$ and $\mathcal{K}$ $=\overline{\bigcup\limits_{\iota \in \Upsilon }%
\mathcal{K}_{\iota }}$ is a \textit{quantized subdomian} of $\mathcal{E}$,
if $\mathcal{K}_{\iota }\subseteq \mathcal{H}_{\iota }$ for all $\iota \in
\Upsilon $ and denote by $\mathcal{F}\subseteq $ $\mathcal{E}$.

 Let $\mathcal{E}=\{\mathcal{H}_{\iota };\iota \in \Upsilon \}$ be a
quantized domain in a Hilbert space $\mathcal{H}\ $and $\mathcal{F}=\{%
\mathcal{K}_{\iota };\iota \in \Upsilon \}$ be a quantized domain in a
Hilbert space $\mathcal{K}\ $. Then $\mathcal{E}\oplus \mathcal{F}=\left\{
\mathcal{H}_{\iota }\oplus \mathcal{K}_{\iota };\iota \in \Upsilon \right\}
\ $is a quantized domain in the Hilbert space $\mathcal{H}\oplus \mathcal{K}$
with the union space $\mathcal{D}_{\mathcal{E}\oplus \mathcal{F}}=\bigcup\limits_{\iota
\in \Upsilon }
\mathcal{H}_{\iota }\oplus \mathcal{K}_{\iota }$. For each $n,\ $we\ will
use the notations: $\underset{n}{\underbrace{\mathcal{E}\oplus ...\oplus
\mathcal{E}}}=\mathcal{E}^{\oplus n}\ $and~$\mathcal{D}_{
\mathcal{E}^{\oplus n}}=\bigcup\limits_{\iota
\in \Upsilon }
\mathcal{H}_{\iota }^{\oplus n}.$

  If $V:\mathcal{D}_{\mathcal{E}}\rightarrow \mathcal{K}$ is a linear
operator such that $V(\mathcal{H}_{\iota })\subseteq \mathcal{K}_{\iota
}$\ for all $\iota \in \Upsilon $, we write $V(\mathcal{E})\subseteq
\mathcal{F}$.

A linear operator $T$ $:$ dom$(T)$ $\subseteq $ $\mathcal{H}$ $\rightarrow $
$\mathcal{H}$ is said to be densely defined if dom$(T)$ is a dense subspace
of $\mathcal{H}$. The adjoint of $T$ is a linear map $T^{\bigstar }:$ dom$%
(T^{\bigstar })$ $\subseteq $ $\mathcal{H}\rightarrow \mathcal{H}$, where%
\begin{equation*}
\text{dom}(T^{\bigstar })=\{\xi \in \mathcal{H};\eta \rightarrow
\left\langle T\eta ,\xi \right\rangle \ \text{is continuous for every }\eta
\in \text{dom}(T)\}
\end{equation*}%
satisfying $\left\langle T\eta ,\xi \right\rangle =\left\langle \eta
,T^{\bigstar }\xi \right\rangle $ for all $\xi \in $dom$(T^{\bigstar })$ and
$\eta \in $dom$(T).$ Let $T$ $:$ dom$(T)$ $\subseteq $ $\mathcal{H}$ $%
\rightarrow $ $\mathcal{H}$ and $S$ $:$ dom$(S)$ $\subseteq $ $\mathcal{H}$ $%
\rightarrow $ $\mathcal{H}$ be two linear operator. If dom$(T)\subseteq $dom$(S)$ and $T=\left. S\right\vert _{\text{dom}(T)}$, we denote by  $T\subseteq S$.

Let $\mathcal{E}=\{\mathcal{H}_{\iota };\iota \in \Upsilon \}$ be a
quantized domain in a Hilbert space $\mathcal{H}$ and
\begin{equation*}
C(\mathcal{D}_{\mathcal{E}})=\{T\in \mathcal{L}(\mathcal{D}_{\mathcal{E}%
});TP_{\iota }=P_{\iota }TP_{\iota }\in B(\mathcal{H})\text{ for all }\iota
\in \Upsilon \}
\end{equation*}%
where $\mathcal{L}(\mathcal{D}_{\mathcal{E}})$ is the collection of all
linear operators on $\mathcal{D}_{\mathcal{E}}$. If $T\in \mathcal{L}(%
\mathcal{D}_{\mathcal{E}})$, then $T\in C(\mathcal{D}_{\mathcal{E}})$ if and
only if $T(\mathcal{H}_{\iota })\subseteq \mathcal{H}_{\mathcal{\iota }}$
and $\left. T\right\vert _{\mathcal{H}_{\iota }}\in B(\mathcal{H}_{\iota })$
for all $\iota \in \Upsilon $, and so, $C(\mathcal{D}_{\mathcal{E}})$ is an
algebra. Let
\begin{equation*}
C^{\ast }(\mathcal{D}_{\mathcal{E}})=\{T\in C(\mathcal{D}_{\mathcal{E}%
});P_{\iota }T\subseteq TP_{\iota }\text{ for all }\iota \in \Upsilon \}.
\end{equation*}%
If $T\in C(\mathcal{D}_{\mathcal{E}})$, then $T\in C^{\ast }(\mathcal{D}_{%
\mathcal{E}})$ if and only if $T(\mathcal{H}_{\iota }^{\bot }\cap \mathcal{D}%
_{\mathcal{E}})\subseteq \mathcal{H}_{\iota }^{\bot }\cap \mathcal{D}_{%
\mathcal{E}}$ for all $\iota \in \Upsilon .\ $

If $T\in C^{\ast }(\mathcal{D}_{\mathcal{E}})$, then $\mathcal{D}_{\mathcal{E%
}}$ $\subseteq $ dom$(T^{\bigstar })$. Moreover, $T^{\bigstar }(\mathcal{H}%
_{\iota })\subseteq \mathcal{H}_{\iota }$ for all $\iota \in \Upsilon $.
Now, let $T^{\ast }=\left. T^{\bigstar }\right\vert _{\mathcal{D}_{\mathcal{E%
}}}$. It is easy to check that $T^{\ast }\in C^{\ast }(\mathcal{D}_{\mathcal{%
E}}),$ and so $C^{\ast }(\mathcal{D}_{\mathcal{E}})$ is a unital $\ast $%
-algebra.

For each $\iota \in \Upsilon ,$ the map $\left\Vert \cdot \right\Vert
_{\iota }:C^{\ast }(\mathcal{D}_{\mathcal{E}})\rightarrow \lbrack 0,\infty )$%
,
\begin{equation*}
\left\Vert T\right\Vert _{\iota }=\left\Vert \left. T\right\vert _{\mathcal{H%
}_{\iota }}\right\Vert =\sup \{\left\Vert T\left( \xi \right) \right\Vert
;\xi \in \mathcal{H}_{\iota },\left\Vert \xi \right\Vert \leq 1\}
\end{equation*}%
is a $C^{\ast }$-seminorm on $C^{\ast }(\mathcal{D}_{\mathcal{E}})$.
Moreover, $C^{\ast }(\mathcal{D}_{\mathcal{E}})$ is a locally $C^{\ast }$%
-algebra with respect to the topology determined by the family of $C^{\ast }$%
-seminorms $\{\left\Vert \cdot \right\Vert _{\iota }\}_{\iota \in \Upsilon }$
and $b(C^{\ast }(\mathcal{D}_{\mathcal{E}}))$ is isomorphic with the $%
C^{\ast }$-algebra $\{T\in B\left( \mathcal{H}\right) ;P_{\iota }T=TP_{\iota
}\ $for all $\iota \in \Upsilon \}.$

If $\mathcal{E}=\{\mathcal{H}_{\iota };\iota \in \Upsilon \}$ is a quantized
domain in the Hilbert space $\mathcal{H}$, then $\mathcal{D}_{\mathcal{E}}$
can be regarded as a strict inductive limit of the direct family of Hilbert
spaces $\mathcal{E}=\{\mathcal{H}_{\iota };\iota \in \Upsilon \},$ $\mathcal{%
D}_{\mathcal{E}}=\lim\limits_{\rightarrow }\mathcal{H}_{\iota },$ and it is
called a locally Hilbert space (see \cite{I}). If $T\in C^{\ast }(\mathcal{D}%
_{\mathcal{E}})$, then $T$ is continuous \cite[Lemma 5.2 ]{I}.

A \textit{local contractive }$\ast $\textit{-morphism }from $\mathcal{A}$ to
$C^{\ast }(\mathcal{D}_{\mathcal{E}})$ is a $\ast $-morphism $\pi :\mathcal{A%
}\rightarrow $ $C^{\ast }(\mathcal{D}_{\mathcal{E}})$ with the property that
for each $\iota \in \Upsilon $ there is $\lambda \in \Lambda $ such that $%
\left\Vert  \varphi \left( a\right) \right\Vert _{\iota }\leq p_{\lambda }\left( a\right) $  for all $a\in \mathcal{A}.$ If $\Lambda =\Upsilon$ and $
\left\Vert  \varphi \left( a\right) \right\Vert _{\iota }= p_{\lambda }\left( a\right) $ for all $a\in \mathcal{A}$, we say that the  $\ast$-morphism $\pi$ is a \textit{ local isometric}  $\ast$\textit{-morphism}. 
It is known that any $\ast $-morphism between $C^{\ast }$-algebras is a
contraction. This result is not true in the case of $\ast $-morphisms
between locally $C^{\ast }$-algebras.

\begin{remark}
Let $\pi :\mathcal{A}\rightarrow $ $C^{\ast }(\mathcal{D}_{\mathcal{E}})$ be
a linear map. Then $\pi $ is a local contractive $\ast $-morphism if and only if $\pi $ is a continuous $%
\ast $-morphism. Indeed, if $\pi $ is a local contractive $\ast $-morphism,
then, clearly $\pi $ is a continuous $\ast $-morphism. Conversely, if $\pi $
is a continuous $\ast $\textit{-}morphism then, for each $\iota \in \Upsilon
,$ there exist $\lambda \in \Lambda \ $and $M_{\iota \lambda }>0$ such that $%
\left\Vert \pi \left( a\right)\right\Vert _{\iota }\leq M_{\delta \lambda }p_{\lambda
}\left( a\right) $ for all $a\in \mathcal{A}.$ Therefore, for each $\iota
\in \Upsilon ,$ there exist $\lambda \in \Lambda $ and a $\ast $-morphism $%
\pi _{\iota \lambda }:\mathcal{A}_{\lambda }\rightarrow B(\mathcal{H}_{\iota
})$\ such that $\pi _{\iota \lambda }\left( \pi _{\lambda }^{\mathcal{A}%
}(a)\right) =\left. \pi \left( a\right) \right\vert _{\mathcal{H}_{\iota }}.$
Since $\pi _{\iota \lambda }$ is a $\ast $-morphism between$\ C^{\ast }$%
-algebras, we have $\left\Vert  \pi \left( a\right) \right\Vert _{\iota }=\left\Vert \pi
_{\iota \lambda }\left( \pi _{\lambda }^{\mathcal{A}}\left( a\right) \right)
\right\Vert _{B(\mathcal{H}_{\iota })}\leq \left\Vert \pi _{\lambda }^{%
\mathcal{A}}\left( a\right) \right\Vert _{\mathcal{A}_{\lambda }}=p_{\lambda
}\left( a\right) $ for all $a\in \mathcal{A}.$ Therefore, $\pi $ is a local
contractive $\ast $-morphism.
\end{remark}
For every locally $C^{\ast }$-algebra $\mathcal{A}$ there exist a quantized
domain $\mathcal{E}$ in a Hilbert space $\mathcal{H}$ and a local isometric $%
\ast $-morphism $\pi :\mathcal{A\rightarrow }C^{\ast }(\mathcal{D}_{%
\mathcal{E}})$ \cite[Theorem 7.2]{D1}. This result can be regarded as an
unbounded analog of the Gelfand-Naimark theorem.

\paragraph{\textbf{Local completely contractive and local completely
positive maps }}

Let $\mathcal{A}$ be a unital locally $C^{\ast }$-algebra with the topology
defined by the family of $C^{\ast }$-seminorms $\left\{ p_{\lambda }\right\}
_{\lambda \in \Lambda }$ and $\{\mathcal{H},\mathcal{E},\mathcal{D}_{%
\mathcal{E}}\}$ be a quantized domain in a Hilbert space $\mathcal{H\ }$with
$\mathcal{E=\{H}_{\iota }\mathcal{\}}_{\iota \in \Upsilon }.$ 
For each $n\in \mathbb{N},$ $%
M_{n}(\mathcal{A})$ denotes the collection of all matrices of order $n$ with
elements in $\mathcal{A}$. Note that $M_{n}(\mathcal{A})$ is a locally $%
C^{\ast }$-algebra where the associated family of $C^{\ast }$-seminorms is
denoted by $\{p_{\lambda }^{n}\}_{\lambda \in \Lambda }$ and $p_{\lambda
}^{n}\left( \left[ a_{ij}\right] _{i,j=1}^{n}\right) =\left\Vert \left[ \pi
_{\lambda }^{\mathcal{A}}\left( a_{ij}\right) \right] _{i,j=1}^{n}\right%
\Vert _{M_{n}(\mathcal{A}_{\lambda })}.$ Recall that $\left[ a_{ij}\right]
_{i,j=1}^{n}\geq _{\lambda }0$ if $\left[ \pi _{\lambda }^{\mathcal{A}%
}\left( a_{ij}\right) \right] _{i,j=1}^{n}$ is a positive element in $M_{n}(%
\mathcal{A}_{\lambda })$. For each $n\in \mathbb{N}$, the locally $C^{\ast}$-algebra $ M_{n}(C^{\ast }(\mathcal{D}_{\mathcal{E}}))$ can be identified to locally $C^{\ast}$-algebra $C^{\ast }(\mathcal{D}_{\mathcal{E}^{\oplus n}})$.

For each $n\in \mathbb{N}$, the $n$-amplification of the linear map $\varphi
:\mathcal{A}\rightarrow C^{\ast }(\mathcal{D}_{\mathcal{E}})$ is the map $%
\varphi ^{\left( n\right) }:M_{n}(\mathcal{A})$ $\rightarrow $ $C^{\ast }(%
\mathcal{D}_{\mathcal{E}^{\oplus n}})$ defined by
\begin{equation*}
\varphi ^{\left( n\right) }\left( \left[ a_{ij}\right] _{i,j=1}^{n}\right) =%
\left[ \varphi \left( a_{ij}\right) \right] _{i,j=1}^{n}
\end{equation*}%
for all $\left[ a_{ij}\right] _{i,j=1}^{n}\in M_{n}(\mathcal{A})$. 

A linear map $\varphi :\mathcal{A}\rightarrow C^{\ast }(\mathcal{D}_{\mathcal{E}})$ is called :

\begin{enumerate}
\item \textit{local contractive} if for each $\iota \in \Upsilon $, there
exists $\lambda \in \Lambda $ such that%
\begin{equation*}
\left\Vert \varphi \left( a\right) \right\Vert _{\iota }\leq p_{\lambda }\left( a\right)
\text{ for all }a\in \mathcal{A};
\end{equation*}

\item \textit{positive }if\textit{\ }$\varphi \left( a\right) \geq 0$
whenever $a\geq 0;$

\item \textit{local positive} if for each $\iota \in \Upsilon $, there
exists $\lambda \in \Lambda $\ such that $\left. \varphi \left( a\right)
\right\vert _{\mathcal{H}_{\iota }}$ is positive in $B(\mathcal{H}_{\iota })$
whenever $a\geq _{\lambda }0$ and $\left. \varphi \left( a\right)
\right\vert _{\mathcal{H}_{\iota }}=0$\ \ \ whenever $a=_{\lambda }0;$

\item \textit{local completely contractive }(\textit{local }$\mathcal{CC}$%
\textit{) } \textit{\ }if for each $\iota \in \Upsilon $, there exists $%
\lambda \in \Lambda $ such that
\begin{equation*}
\left\Vert  \varphi ^{\left( n\right) }\left( \left[ a_{ij}\right]
_{i,j=1}^{n}\right) \right\Vert _{\iota }\leq
p_{\lambda }^{n}\left( \left[ a_{ij}\right] _{i,j=1}^{n}\right) \text{ }
\end{equation*}%
for all $\left[ a_{ij}\right] _{i,j=1}^{n}\in M_{n}(\mathcal{A})\ $and for
all $n\in \mathbb{N};$

\item \textit{completely positive }if $\varphi ^{\left( n\right) }\left( %
\left[ a_{ij}\right] _{i,j=1}^{n}\right) \geq 0\ $whenever $\left[ a_{ij}%
\right] _{i,j=1}^{n}\geq 0$ for all $n\in \mathbb{N};$

\item \textit{local completely positive }(\textit{local }$\mathcal{CP}$)%
\textit{\ }if for each $\iota \in \Upsilon $, there exists $\lambda \in
\Lambda $ such that $\left. \varphi ^{\left( n\right) }\left( \left[ a_{ij}%
\right] _{i,j=1}^{n}\right) \right\vert _{\mathcal{H}_{\iota }^{\oplus n}}$
is positive in $B\left( \mathcal{H}_{\iota }^{\oplus n}\right) \ $whenever $%
\left[ a_{ij}\right] _{i,j=1}^{n}\geq _{\lambda }0$ and $\left. \varphi
^{\left( n\right) }\left( \left[ a_{ij}\right] _{i,j=1}^{n}\right)
\right\vert _{\mathcal{H}_{\iota }^{\oplus n}}=0\ \ $whenever $\left[ a_{ij}%
\right] _{i,j=1}^{n}=_{\lambda }0,\ $for all $n\in \mathbb{N}.$
\end{enumerate}

If $\pi :\mathcal{A}\rightarrow C^{\ast }(\mathcal{D}_{\mathcal{E}})$ is
a local contractive $\ast $-morphism, then $\pi ^{\left( n\right)
}:M_{n}\left( \mathcal{A}\right) \rightarrow $ $M_{n}\left( C^{\ast }(%
\mathcal{D}_{\mathcal{E}})\right) $ is a local contractive $\ast $-morphism,
and so $\pi $ is a local completely positive and completely contractive.

\textbf{\ }It is known that the positivity property of the linear maps
between $C^{\ast }$-algebras implies their continuity. This property is not
true in the case of positive maps between locally $C^{\ast }$-algebras, but
it is true in the case of local positive maps \cite[Proposition 3.1]{MJ3}.
Moreover, a map between two locally $C^{\ast }$-algebras is local positive
if and only if it is positive and continuous \cite[Corollary 3.4]{MJ3}.

Let $\mathcal{A}\ $be a unital locally $C^{\ast }$-algebra and $\{\mathcal{H}%
,\mathcal{E},\mathcal{D}_{\mathcal{E}}\}$ be a quantized domain in a Hilbert
space $\mathcal{H\ }$with $\mathcal{E=\{H}_{\iota }\mathcal{\}}_{\iota \in
\Upsilon }$. The set of all local\textit{\ }$\mathcal{CPCC}$\textit{-}maps
from\textit{\ }$\mathcal{A}$ to $C^{\ast }(\mathcal{D}_{\mathcal{E}})$%
\textit{\ }is denoted by $\mathcal{CPCC}_{\text{loc}}(\mathcal{A},C^{\ast }(%
\mathcal{D}_{\mathcal{E}})).$

\begin{theorem}
\label{s} \cite[Theorem 5.1]{D1} Let $\varphi \in \mathcal{CPCC}_{\text{loc}%
}(\mathcal{A},C^{\ast }(\mathcal{D}_{\mathcal{E}}))$. Then there exist a
quantized domain $\{\mathcal{H}^{\varphi },\mathcal{E}^{\varphi },\mathcal{D}%
_{\mathcal{E}^{\varphi }}\}$, where $\mathcal{E}^{\varphi }=\{\mathcal{H}%
_{\iota }^{\varphi };\iota \in \Upsilon \}$ is an upward filtered family of
closed subspaces of $\mathcal{H}^{\varphi }$, a contraction $V_{\varphi }:%
\mathcal{H}\rightarrow \mathcal{H}^{\varphi }$ and a unital local
contractive $\ast $-morphism $\pi _{\varphi }:\mathcal{A\rightarrow }C^{\ast
}(\mathcal{D}_{\mathcal{E}^{\varphi }})$ such that

\begin{enumerate}
\item $V_{\varphi }\left( \mathcal{E}\right) \subseteq \mathcal{E}^{\varphi
};$

\item $\varphi \left( a\right) \subseteq V_{\varphi }^{\ast }\pi _{\varphi
}\left( a\right) V_{\varphi };$

\item $\mathcal{H}_{\iota }^{\varphi }=\left[ \pi _{\varphi }\left( \mathcal{%
A}\right) V_{\varphi }\mathcal{H}_{\iota }\right] $ for all $\iota \in
\Upsilon \ $\cite[Proposition 3.3]{BGK}.

Moreover, if $\varphi \left( 1_{\mathcal{A}}\right) =$id$_{\mathcal{D}_{%
\mathcal{E}}}$, then $V_{\varphi }$ is an isometry.
\end{enumerate}
\end{theorem}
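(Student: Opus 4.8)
The plan is to construct the Stinespring dilation of $\varphi$ by imitating the classical proof, working level by level through the projective/inductive limit structure. First I would form the algebraic tensor product $\mathcal{A}\odot\mathcal{D}_{\mathcal{E}}$ and equip it with the sesquilinear form determined by
\begin{equation*}
\langle a\otimes\xi,\,b\otimes\eta\rangle_{\varphi}=\langle\varphi(b^{\ast}a)\xi,\eta\rangle,
\end{equation*}
extended by linearity. The local complete positivity of $\varphi$ is exactly what guarantees that, for each $\iota\in\Upsilon$ and the associated $\lambda\in\Lambda$, the restriction of this form to $\mathcal{A}\odot\mathcal{H}_{\iota}$ is positive semidefinite (apply condition (6) to the matrix $[b_{j}^{\ast}b_{i}]$, which is $\lambda$-positive). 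Quotienting by the null space and completing gives a Hilbert space $\mathcal{H}_{\iota}^{\varphi}$; the directedness of $\Upsilon$ and the filtered structure of $\mathcal{E}$ make $\{\mathcal{H}_{\iota}^{\varphi}\}_{\iota}$ an upward filtered family whose union $\mathcal{D}_{\mathcal{E}^{\varphi}}$ sits densely inside $\mathcal{H}^{\varphi}:=\overline{\bigcup_{\iota}\mathcal{H}_{\iota}^{\varphi}}$, i.e. a genuine quantized domain.

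Next I would define $\pi_{\varphi}(a)$ on elementary tensors by $\pi_{\varphi}(a)(b\otimes\xi)=(ab)\otimes\xi$ and check it descends to $\mathcal{D}_{\mathcal{E}^{\varphi}}$. Here the local complete \emph{contractivity} of $\varphi$ enters: it yields, for each $\iota$ with associated $\lambda$, the estimate $\|\pi_{\varphi}(a)\restriction_{\mathcal{H}_{\iota}^{\varphi}}\|\le p_{\lambda}(a)$ (via the standard $2\times2$-matrix trick comparing $\varphi^{(2)}$ on $[\begin{smallmatrix}b^{\ast}a^{\ast}ab & b^{\ast}a^{\ast}c\\ c^{\ast}ab & c^{\ast}c\end{smallmatrix}]$-type matrices with $p_{\lambda}(a)^{2}$). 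This shows $\pi_{\varphi}(a)$ extends to a bounded operator on each $\mathcal{H}_{\iota}^{\varphi}$ leaving it invariant, that it maps $(\mathcal{H}_{\iota}^{\varphi})^{\perp}\cap\mathcal{D}_{\mathcal{E}^{\varphi}}$ into itself (because $\pi_{\varphi}(a)$ and $\pi_{\varphi}(a)^{\ast}=\pi_{\varphi}(a^{\ast})$ both preserve $\mathcal{H}_{\iota}^{\varphi}$), hence $\pi_{\varphi}(a)\in C^{\ast}(\mathcal{D}_{\mathcal{E}^{\varphi}})$, and that $\pi_{\varphi}$ is a unital local contractive $\ast$-morphism. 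Then set $V_{\varphi}\xi=1_{\mathcal{A}}\otimes\xi$; local contractivity of $\varphi$ (taking $n=1$) gives $\|V_{\varphi}\xi\|^{2}=\langle\varphi(1)\xi,\xi\rangle\le p_{\lambda_{0}}(1)\|\xi\|^{2}$, but since $\varphi$ is contractive and $1$ has seminorm $1$ one gets $\|V_{\varphi}\|\le1$; moreover $V_{\varphi}(\mathcal{H}_{\iota})\subseteq\mathcal{H}_{\iota}^{\varphi}$, which is (1). A direct computation on elementary tensors gives $\langle\pi_{\varphi}(a)V_{\varphi}\xi,V_{\varphi}\eta\rangle=\langle\varphi(a)\xi,\eta\rangle$, and since $\varphi(a)$ and $V_{\varphi}^{\ast}\pi_{\varphi}(a)V_{\varphi}$ agree on the dense domain $\mathcal{D}_{\mathcal{E}}$ this yields the inclusion $\varphi(a)\subseteq V_{\varphi}^{\ast}\pi_{\varphi}(a)V_{\varphi}$ of (2). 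Property (3) is immediate from the construction since $\mathcal{H}_{\iota}^{\varphi}$ was defined precisely as the closure of $\pi_{\varphi}(\mathcal{A})V_{\varphi}\mathcal{H}_{\iota}$; if one prefers to cite \cite[Proposition 3.3]{BGK}, it can be invoked here. Finally, if $\varphi(1_{\mathcal{A}})=\mathrm{id}_{\mathcal{D}_{\mathcal{E}}}$ then $\|V_{\varphi}\xi\|^{2}=\langle\xi,\xi\rangle$, so $V_{\varphi}$ is an isometry.

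The main obstacle I anticipate is not any single estimate but the bookkeeping needed to make the ``level by level'' construction cohere across the directed index set: one must verify that the seminorm estimates furnished by local $\mathcal{CP}$ and local $\mathcal{CC}$ are \emph{compatible} with the inclusions $\mathcal{H}_{\iota}^{\varphi}\hookrightarrow\mathcal{H}_{\iota'}^{\varphi}$ for $\iota\le\iota'$, so that $\pi_{\varphi}(a)$ is a \emph{single} well-defined operator on $\mathcal{D}_{\mathcal{E}^{\varphi}}$ rather than an unrelated family, and that the quantized-subdomain conditions ($P_{\iota}^{\varphi}T\subseteq TP_{\iota}^{\varphi}$) genuinely hold. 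This requires care in choosing, for each $\iota$, an index $\lambda=\lambda(\iota)$ and arranging (using that $\Lambda$ is upward filtered) that these choices can be made monotone, so that the induced $\ast$-morphisms $\pi_{\iota\lambda}$ on the $C^{\ast}$-algebra quotients are consistent. Since the excerpt states this as a known theorem of Dosiev \cite[Theorem 5.1]{D1} with the refinement (3) from \cite[Proposition 3.3]{BGK}, in the paper itself one would simply recall the statement and cite these references rather than reproduce the argument; the sketch above is how one would reconstruct it if needed.
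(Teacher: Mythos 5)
Your construction is correct and is essentially the argument behind the cited result: the paper gives no proof of Theorem \ref{s}, quoting it from \cite[Theorem 5.1]{D1} together with \cite[Proposition 3.3]{BGK}, and your GNS-type dilation (positive semidefinite form on $\mathcal{A}\otimes_{\mathrm{alg}}\mathcal{D}_{\mathcal{E}}$, level spaces $\mathcal{H}_{\iota}^{\varphi}$, left multiplication $\pi_{\varphi}$, embedding $V_{\varphi}\xi=1_{\mathcal{A}}\otimes\xi$) is exactly the construction of those sources, the same one the paper itself carries out in two-variable form in the proof of Lemma \ref{max}. One small correction of emphasis: the estimate $\left\Vert \pi_{\varphi}(a)\right\Vert_{\iota}\leq p_{\lambda}(a)$ already follows from local complete positivity applied to the $\lambda$-positive matrix $p_{\lambda}(a)^{2}\left[ b_{j}^{\ast}b_{i}\right]-\left[ b_{j}^{\ast}a^{\ast}ab_{i}\right]$ (no $2\times 2$ trick or complete contractivity needed there), while local complete contractivity is what makes $V_{\varphi}$ a contraction via $\left\Vert \varphi(1_{\mathcal{A}})\right\Vert_{\iota}\leq p_{\lambda}(1_{\mathcal{A}})\leq 1$.
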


The triple $\left( \pi _{\varphi },V_{\varphi },\{\mathcal{H}^{\varphi },%
\mathcal{E}^{\varphi },\mathcal{D}_{\mathcal{E}^{\varphi }}\}\right) $
constructed in Theorem \ref{s} is called a minimal Stinespring dilation of $%
\varphi $ or a minimal Stinespring representation associated to $\varphi $.
Moreover, the minimal Stinespring dilation of $\varphi $ is unique up to
unitary equivalence in the following sense: if $\left( \pi _{\varphi
},V_{\varphi },\{\mathcal{H}^{\varphi },\mathcal{E}^{\varphi },\mathcal{D}_{%
\mathcal{E}^{\varphi }}\}\right) $ and$\left( \widetilde{\pi }_{\varphi },%
\widetilde{V}_{\varphi },\{\widetilde{\mathcal{H}}^{\varphi },\widetilde{%
\mathcal{E}}^{\varphi },\widetilde{\mathcal{D}}_{\widetilde{\mathcal{E}}%
^{\varphi }}\}\right) $\ are two minimal Stinespring dilations of $\varphi $%
, then there exists a unitary operator $U_{\varphi }:\mathcal{H}^{\varphi
}\rightarrow \widetilde{\mathcal{H}}^{\varphi }$ such that $U_{\varphi
}V_{\varphi }=\widetilde{V}_{\varphi }$ and $U_{\varphi }\pi _{\varphi
}\left( a\right) \subseteq \widetilde{\pi }_{\varphi }\left( a\right)
U_{\varphi }$ for all $a\in \mathcal{A\ }$\cite[Theorem 3.4]{BGK}.

If $\varphi \in \mathcal{CPCC}_{\text{loc}}(\mathcal{A},C^{\ast }(\mathcal{D}%
_{\mathcal{E}})),$ then $\varphi \left( b(\mathcal{A})\right) \subseteq
b(C^{\ast }(\mathcal{D}_{\mathcal{E}}))$. Moreover, there is a $\mathcal{CPCC%
}$-map $\left. \varphi \right\vert _{b(\mathcal{A})}:b(\mathcal{A}%
)\rightarrow B(\mathcal{H})$ such that $\left. \left. \varphi \right\vert
_{b(\mathcal{A})}\left( a\right) \right\vert _{\mathcal{D}_{\mathcal{E}%
}}=\varphi \left( a\right) \ $for all $a\in b(\mathcal{A})\ $\cite[p. 910]%
{MJ3}, and if $\left( \pi _{\varphi },V_{\varphi },\{\mathcal{H}^{\varphi },%
\mathcal{E}^{\varphi },\mathcal{D}_{\mathcal{E}^{\varphi }}\}\right) $ is a
minimal Stinespring dilation of $\varphi $, then $\left( \left. \pi
_{\varphi }\right\vert _{b(\mathcal{A})},V_{\varphi },\mathcal{H}^{\varphi
}\right) $, where $\left. \left. \pi _{\varphi }\right\vert _{b(\mathcal{A}%
)}\left( a\right) \right\vert _{\mathcal{D}_{\widetilde{\mathcal{E}}%
^{\varphi }}}=\pi _{\varphi }\left( a\right) \ $for all $a\in b(\mathcal{A})$%
, is a minimal Stinespring dilation of $\left. \varphi \right\vert _{b(%
\mathcal{A})}$ \cite[p. 910]{MJ3}.

 \section{Extremal points}

Let $\mathcal{A}$ be a unital locally $C^{\ast }$-algebra with the topology
defined by the family of $C^{\ast }$-seminorms $\left\{ p_{\lambda }\right\}
_{\lambda \in \Lambda }$ and $\{\mathcal{H},\mathcal{E},\mathcal{D}_{%
\mathcal{E}}\}$ be a quantized domain in a Hilbert space $\mathcal{H\ }$with
$\mathcal{E=\{H}_{\iota }\mathcal{\}}_{\iota \in \Upsilon }.$

For a local contractive $\ast $-morphism $\pi :\mathcal{A\rightarrow }%
C^{\ast }(\mathcal{D}_{\mathcal{E}}),$%
\begin{equation*}
\pi \left( \mathcal{A}\right) ^{^{\prime }}=\{T\in B\left( \mathcal{H}%
\right) ;T\pi \left( a\right) \subseteq \pi \left( a\right) T\ \ \text{for\
all\ }a\in \mathcal{A}\}.
\end{equation*}

Let $\varphi \in \mathcal{CPCC}_{\text{loc}}(\mathcal{A},C^{\ast }(\mathcal{D%
}_{\mathcal{E}}))$ and $\left( \pi _{\varphi },V_{\varphi },\{\mathcal{H}%
^{\varphi },\mathcal{E}^{\varphi },\mathcal{D}_{\mathcal{E}^{\varphi
}}\}\right) $ be a minimal Stinespring dilation of $\varphi $. If $T$ is a
positive element in $\pi _{\varphi }\left( \mathcal{A}\right) ^{^{\prime
}}\cap C^{\ast }(\mathcal{D}_{\mathcal{E}^{\varphi }}),$ then the map $%
\varphi _{T}:\mathcal{A}\rightarrow C^{\ast }(\mathcal{D}_{\mathcal{E}})$
defined by $\varphi _{T}\left( a\right) =\left. V_{\varphi }^{\ast }T\pi
_{\varphi }\left( a\right) V_{\varphi }\right\vert _{\mathcal{D}_{\mathcal{E}%
}}$is local completely positive, and if $0\leq T\leq $id$_{\mathcal{H}},$
then $\varphi _{T}\in \mathcal{CPCC}_{\text{loc}}(\mathcal{A},C^{\ast }(%
\mathcal{D}_{\mathcal{E}}))$ \cite[Proposition 4.3]{BGK}.

Let $\varphi ,\psi \in \mathcal{CPCC}_{\text{loc}}(\mathcal{A},C^{\ast }(%
\mathcal{D}_{\mathcal{E}}))$. We say that $\psi $ is \textit{dominated}%
\textbf{\ }by $\varphi $, and note by $\varphi \geq \psi $, if $\varphi
-\psi \in \mathcal{CPCC}_{\text{loc}}(\mathcal{A},C^{\ast }(\mathcal{D}_{%
\mathcal{E}}))\ $ \cite[Definition 4.1]{BGK}.

If $\psi $ is dominated by $\varphi $ and, if $\left( \pi _{\varphi },V_{\varphi },\{\mathcal{H}%
^{\varphi },\mathcal{E}^{\varphi },\mathcal{D}_{\mathcal{E}^{\varphi
}}\}\right) $ is a minimal Stinespring dilation of $\varphi $, by Radon Nikodym type theorem \cite[%
Theorem 4.5]{BGK}, there is a
unique element $T\in \pi _{\varphi }\left( \mathcal{A}\right) ^{^{\prime
}}\cap C^{\ast }(\mathcal{D}_{\mathcal{E}^{\varphi }}),0\leq T\leq $id$_{%
\mathcal{H}},$ such that $\psi =\varphi _{T}.$

The following proposition is a local convex version of Corollary 1.4.4 \cite{AW}.

\begin{proposition}
\label{0} Let $\varphi \in \mathcal{CPCC}_{\text{loc}}(\mathcal{A},C^{\ast }(%
\mathcal{D}_{\mathcal{E}}))\ $and $\left( \pi _{\varphi },V_{\varphi },\{%
\mathcal{H}^{\varphi },\mathcal{E}^{\varphi },\mathcal{D}_{\mathcal{E}%
^{\varphi }}\}\right) $ be a minimal Stinespring dilation of $\varphi $.
Then the extremal points in $[0,\varphi ]=\{\psi \in \mathcal{CPCC}_{\text{%
loc}}(\mathcal{A},C^{\ast }(\mathcal{D}_{\mathcal{E}}));\varphi \geq \psi \}$
are those maps of the form $\varphi _{P}$, where $P$ is a projection in $\pi
_{\varphi }\left( \mathcal{A}\right) ^{^{\prime }}\cap C^{\ast }(\mathcal{D}%
_{\mathcal{E}^{\varphi }}).$
\end{proposition}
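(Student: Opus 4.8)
The plan is to transfer the problem, via the Radon--Nikodym correspondence, to the operator system $\mathcal{R}=\pi_{\varphi}(\mathcal{A})'\cap C^{\ast}(\mathcal{D}_{\mathcal{E}^{\varphi}})$ and its positive part $[0,\mathrm{id}_{\mathcal{H}}]$, and then show that the affine bijection $T\mapsto\varphi_T$ carries extremal points of $[0,\mathrm{id}_{\mathcal{H}}]$ (in this operator sense) onto extremal points of $[0,\varphi]$, finally identifying those extremal $T$ as precisely the projections. First I would record that, by \cite[Theorem 4.5]{BGK} and the discussion preceding the proposition, the map $T\mapsto\varphi_T$, $\varphi_T(a)=\left.V_{\varphi}^{\ast}T\pi_{\varphi}(a)V_{\varphi}\right\vert_{\mathcal{D}_{\mathcal{E}}}$, is a bijection from $\{T\in\mathcal{R}:0\le T\le\mathrm{id}_{\mathcal{H}}\}$ onto $[0,\varphi]$. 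It is clearly affine: $\varphi_{sT+(1-s)S}=s\varphi_T+(1-s)\varphi_S$. Hence $\psi$ is extremal in $[0,\varphi]$ if and only if the corresponding $T$ is extremal in the convex set $\{T\in\mathcal{R}:0\le T\le\mathrm{id}_{\mathcal{H}}\}$.

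Next I would identify the extremal points of $\{T\in\mathcal{R}:0\le T\le\mathrm{id}_{\mathcal{H}}\}$. Since $\mathcal{R}$ is a von Neumann algebra — it is $\pi_{\varphi}(\mathcal{A})'$ intersected with the commutant-type algebra $b(C^{\ast}(\mathcal{D}_{\mathcal{E}^{\varphi}}))\cong\{S\in B(\mathcal{H}^{\varphi}):P_{\iota}^{\varphi}S=SP_{\iota}^{\varphi}\ \forall\iota\}$, both of which are weakly closed $\ast$-subalgebras of $B(\mathcal{H}^{\varphi})$ — the extremal points of its positive unit interval are exactly its projections. This is the standard fact (Corollary 1.4.4 of \cite{AW}, in the bounded setting) that in a von Neumann algebra the extreme points of $[0,1]$ are the projections; I would invoke it directly, after checking that an element $T$ belongs to $\mathcal{R}$ iff it is a bounded operator in $\pi_{\varphi}(\mathcal{A})'$ commuting with every $P_{\iota}^{\varphi}$, so that $\mathcal{R}$ really is a von Neumann algebra and a projection $P\in\mathcal{R}$ automatically lies in $C^{\ast}(\mathcal{D}_{\mathcal{E}^{\varphi}})$.

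Combining the two steps gives the statement: $\psi\in[0,\varphi]$ is extremal iff $\psi=\varphi_P$ for a projection $P\in\pi_{\varphi}(\mathcal{A})'\cap C^{\ast}(\mathcal{D}_{\mathcal{E}^{\varphi}})$. The main obstacle I anticipate is not the algebra of extreme points of $[0,1]$ in a von Neumann algebra — that is classical — but the bookkeeping needed to be sure that the Radon--Nikodym parametrization is a genuine \emph{affine} bijection onto \emph{all} of $[0,\varphi]$ and that $\mathcal{R}$ is honestly a von Neumann algebra in $B(\mathcal{H}^{\varphi})$ (in particular that intersecting with $C^{\ast}(\mathcal{D}_{\mathcal{E}^{\varphi}})$ does not destroy weak closedness and that every projection of $\pi_{\varphi}(\mathcal{A})'$ landing in $b(\cdot)$ is captured). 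One should also verify the translation of ``extreme in $[0,\varphi]$'' under the bijection in both directions: if $T$ decomposes nontrivially then so does $\varphi_T$ by injectivity, and conversely a nontrivial decomposition of $\varphi_T$ pulls back through the inverse (again by \cite[Theorem 4.5]{BGK}) to a nontrivial decomposition of $T$. Once these routine but slightly delicate identifications are in place, the result follows immediately from the classical characterization of extreme points.
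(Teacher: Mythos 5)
Your proposal is correct in substance, and its first half coincides with the paper's: both start from the Radon--Nikodym correspondence $T\mapsto\varphi_{T}$ of \cite[Theorem 4.5, Corollary 4.6]{BGK}, which is an affine order bijection between $\{T\in\pi_{\varphi}(\mathcal{A})^{\prime}\cap C^{\ast}(\mathcal{D}_{\mathcal{E}^{\varphi}}):0\leq T\leq\mathrm{id}\}$ and $[0,\varphi]$, so that extremality transfers both ways. Where you genuinely diverge is in identifying the extreme points of the operator interval: you propose to show that $\mathcal{R}=\pi_{\varphi}(\mathcal{A})^{\prime}\cap C^{\ast}(\mathcal{D}_{\mathcal{E}^{\varphi}})$ is itself a von Neumann algebra (equivalently, a weakly closed $\ast$-subalgebra of $B(\mathcal{H}^{\varphi})$) and then invoke the classical fact that the extreme points of its unit interval are its projections. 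The paper deliberately avoids this claim: it only uses the inclusion of the interval of $\mathcal{R}$ into the interval of the honest von Neumann algebra $\bigl(\left.\pi_{\varphi}\right\vert_{b(\mathcal{A})}(b(\mathcal{A}))\bigr)^{\prime}$ (via \cite[Remark 4.7(1)]{MJ3}), applies \cite[Corollary 1.4.4]{AW} there to get that projections in $\mathcal{R}$ are extreme, and for the converse argues at the level of maps: a non-projection extreme $S$ decomposes nontrivially in the larger interval, the corresponding completely positive maps on $b(\mathcal{A})$ are continuous, extend to local $\mathcal{CPCC}$-maps on $\mathcal{A}$, and yield a nontrivial decomposition of $\varphi_{S}$ in $[0,\varphi]$, a contradiction. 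Your route is cleaner and avoids that continuity-and-extension detour, but the burden you correctly flag is real and is exactly what the paper sidesteps: you must verify that the ``unbounded commutant'' condition $T\pi_{\varphi}(a)\subseteq\pi_{\varphi}(a)T$, intersected with commutation with all $P_{\iota}$, gives a set closed under adjoints and weak limits. This does work, but the $\ast$-closedness is not formal: it uses that commuting with every $P_{\iota}$ forces $T^{\ast}(\mathcal{D}_{\mathcal{E}^{\varphi}})\subseteq\mathcal{D}_{\mathcal{E}^{\varphi}}$ and that $\pi_{\varphi}(a)^{\ast}=\pi_{\varphi}(a^{\ast})$ in $C^{\ast}(\mathcal{D}_{\mathcal{E}^{\varphi}})$, and weak closedness uses boundedness of $\left.\pi_{\varphi}(a)\right\vert_{\mathcal{H}_{\iota}^{\varphi}}$; without these checks spelled out, your second step would be incomplete, since the commutant of a family of unbounded operators in the inclusion sense is not a von Neumann algebra in general. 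With them supplied, your argument gives the proposition by a more self-contained route, while the paper's proof trades that verification for reliance on \cite{MJ3} and \cite{AW} plus the extension argument (where, strictly, one should also note that the extended maps remain dominated by $\varphi$ so that the decomposition really lives in $[0,\varphi]$).
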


\begin{proof}
By \cite[Corollary 4.6]{BGK}, the map
\begin{equation*}
\zeta :\{T\in \pi _{\varphi }\left( \mathcal{A}\right) ^{^{\prime }}\cap
C^{\ast }(\mathcal{D}_{\mathcal{E}^{\varphi }}),0\leq T\leq id_{\mathcal{H}%
}\}\rightarrow \lbrack 0,\varphi ]\text{, defined by }\zeta \left( T\right)
=\varphi _{T}
\end{equation*}%
is an order isomorphism preserving convexity structure. Consequently, $%
\varphi _{T}$ is an extremal point in $[0,\varphi ]$ if and only if $T$ is
an extremal point in $\{T\in \pi _{\varphi }\left( \mathcal{A}\right)
^{^{\prime }}\cap C^{\ast }(\mathcal{D}_{\mathcal{E}^{\varphi }}),$ $0\leq
T\leq $id$_{\mathcal{H}}\}$. By \cite[Remark 4.7(1)]{MJ3}, $\{T\in \pi
_{\varphi }\left( \mathcal{A}\right) ^{^{\prime }}\cap C^{\ast }(\mathcal{D}%
_{\mathcal{E}^{\varphi }}),$ $0\leq T\leq $id$_{\mathcal{H}}\}\subseteq
\{T\in \left( \left. \pi _{\varphi }\right\vert _{b(\mathcal{A)}}\left( b(%
\mathcal{A)}\right) \right) ^{^{\prime }},$ $0\leq T\leq $id$_{\mathcal{H}%
}\} $. The extreme points in $\{T\in \left( \left. \pi _{\varphi
}\right\vert _{b(\mathcal{A)}}\left( b(\mathcal{A)}\right) \right)
^{^{\prime }},$ $0\leq T\leq $id$_{\mathcal{H}}\}$ are the projections in $%
\left( \left. \pi _{\varphi }\right\vert _{b(\mathcal{A)}}\left( b(\mathcal{%
A)}\right) \right) ^{^{\prime }}\ $\cite[Corollary 1.4.4]{AW}. If $P\in $ $%
\{T\in \pi _{\varphi }\left( \mathcal{A}\right) ^{^{\prime }}\cap C^{\ast }(%
\mathcal{D}_{\mathcal{E}^{\varphi }}),$ $0\leq T\leq $id$_{\mathcal{H}}\}$
is a projection, then $P $ is an extreme point in $\{T\in \left( \left. \pi
_{\varphi }\right\vert _{b(\mathcal{A)}}\left( b(\mathcal{A)}\right) \right)
^{^{\prime }},$ $0\leq T\leq $id$_{\mathcal{H}}\}$, and so, it is an extreme
point in $\{T\in \pi _{\varphi }\left( \mathcal{A}\right) ^{^{\prime }}\cap
C^{\ast }(\mathcal{D}_{\mathcal{E}^{\varphi }}),$ $0\leq T\leq $id$_{%
\mathcal{H}}\}.$

Conversely, let $S$ be an extreme point in $\{T\in \pi _{\varphi }\left(
\mathcal{A}\right) ^{^{\prime }}\cap C^{\ast }(\mathcal{D}_{\mathcal{E}%
^{\varphi }}),$ $0\leq T\leq $id$_{\mathcal{H}}\}$. Suppose that $S$ is not
a projection, then $S$ is not an extreme point in $\{T\in \left( \left. \pi_{\varphi }\right\vert _{b(\mathcal{A)}}\left( b(\mathcal{A)}\right) \right)
^{^{\prime }},$ $0\leq T\leq $id$_{\mathcal{H}}\}$, and so, there exist two completely positive maps $ \varphi_i:b(\mathcal{A})\rightarrow B(\mathcal{H}), i=1,2$ such that $\varphi_1 \neq \varphi_2$ and $\left( \left. \varphi
\right\vert _{b(\mathcal{A})}\right) _{S}=\frac{1}{2}\varphi _1+\frac{1%
}{2}\varphi _2$. Since $\varphi _i\leq 2\left( \left. \varphi
\right\vert _{b(\mathcal{A})}\right) _{S},i=1,2$, and $\left( \left. \varphi
\right\vert _{b(\mathcal{A})}\right) _{S}$ is continuous with respect to the
families of $C^{\ast }$-seminorms $\{\left. p_{\lambda }\right\vert _{b(%
\mathcal{A})}\}_{\lambda \in \Lambda }$ and $\{\left\Vert \cdot \right\Vert
_{\lambda }\}_{\lambda \in \Lambda },$ it follows that $\varphi _1$
and $\varphi _2$ are continuous too. Therefore, $\varphi _{i}$
extends to a local $\mathcal{CPCC}$-map from $\mathcal{A}$ to $C^{\ast }(%
\mathcal{D}_{\mathcal{E}})$, denoted by $\psi _{i},i=1,2.$ Moreover, $%
\varphi _{S}=\frac{1}{2}\psi _{1}+\frac{1}{2}\psi _{2}$. Therefore, there
exist $\psi _{1},\psi _{2}\in \mathcal{CPCC}_{\text{loc}}(\mathcal{A}%
,C^{\ast }(\mathcal{D}_{\mathcal{E}}))$, $\psi _{1}\neq \psi _{2},$ such
that $\varphi _{S}=\frac{1}{2}\psi _{1}+\frac{1}{2}\psi _{2}$, a
contradiction. Consequently, $S$ is a projection.
\end{proof}

Let $P\in C^{\ast }(\mathcal{D}_{\mathcal{E}})$ such that $0\leq P\leq $id$_{%
\mathcal{D}_{\mathcal{E}}}$. The set%
\begin{equation*}
\mathcal{CPCC}_{\text{loc,}P}(\mathcal{A},C^{\ast }(\mathcal{D}_{\mathcal{E}%
}))=\{\varphi \in \mathcal{CPCC}_{\text{loc}}(\mathcal{A},C^{\ast }(\mathcal{%
D}_{\mathcal{E}}));\varphi \left( 1_{\mathcal{A}}\right) =P\}
\end{equation*}%
is convex.

\begin{lemma}
\label{11}If $\varphi :\mathcal{A}\rightarrow C^{\ast }(\mathcal{D}_{%
\mathcal{E}})$ is a local completely positive map such that $\varphi \left(
1_{\mathcal{A}}\right) =P,$ then $\varphi \in \mathcal{CPCC}_{\text{loc,}P}(%
\mathcal{A},C^{\ast }(\mathcal{D}_{\mathcal{E}})).$
\end{lemma}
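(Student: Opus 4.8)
The plan is to show that a local completely positive map $\varphi$ with $\varphi(1_{\mathcal{A}})=P$ is automatically local completely contractive, i.e.\ it belongs to $\mathcal{CPCC}_{\text{loc}}(\mathcal{A},C^{\ast}(\mathcal{D}_{\mathcal{E}}))$; membership in $\mathcal{CPCC}_{\text{loc},P}$ then follows since $\varphi(1_{\mathcal{A}})=P$ by hypothesis. First I would recall that, by \cite[Proposition 3.1]{MJ3} and \cite[Corollary 3.4]{MJ3}, a local completely positive map is continuous, hence for each $\iota\in\Upsilon$ there is $\lambda\in\Lambda$ and a completely positive map $\varphi_{\iota\lambda}:\mathcal{A}_{\lambda}\to B(\mathcal{H}_{\iota})$ such that $\varphi_{\iota\lambda}\circ\pi_{\lambda}^{\mathcal{A}}=(\,\cdot\,)|_{\mathcal{H}_{\iota}}\circ\varphi$, exactly as in the Remark following the definition of local contractive $\ast$-morphism. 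This reduces everything to a statement about the $C^{\ast}$-algebra level maps $\varphi_{\iota\lambda}$.

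Next I would invoke the classical fact for $C^{\ast}$-algebras: a completely positive map $\Phi$ from a unital $C^{\ast}$-algebra satisfies $\|\Phi^{(n)}\|=\|\Phi(1)\|$ for all $n$, so in particular $\|\Phi^{(n)}([a_{ij}])\|\le\|\Phi(1)\|\,\|[a_{ij}]\|$ (this is the standard bound; see e.g.\ Paulsen). Here $\Phi=\varphi_{\iota\lambda}$ and $\Phi(1_{\mathcal{A}_{\lambda}})=\varphi_{\iota\lambda}(\pi_{\lambda}^{\mathcal{A}}(1_{\mathcal{A}}))=\varphi(1_{\mathcal{A}})|_{\mathcal{H}_{\iota}}=P|_{\mathcal{H}_{\iota}}$. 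Since $0\le P\le\mathrm{id}_{\mathcal{D}_{\mathcal{E}}}$, we get $\|P|_{\mathcal{H}_{\iota}}\|\le 1$, hence $\|\varphi_{\iota\lambda}^{(n)}\|\le 1$ for every $n$. Translating back through the identifications $M_n(C^{\ast}(\mathcal{D}_{\mathcal{E}}))\cong C^{\ast}(\mathcal{D}_{\mathcal{E}^{\oplus n}})$ and $M_n(\mathcal{A}_{\lambda})\cong(M_n(\mathcal{A}))_{\lambda}$, this says precisely that for each $\iota$ there is $\lambda$ with $\|\varphi^{(n)}([a_{ij}])\|_{\iota}\le p_{\lambda}^{n}([a_{ij}])$ for all $[a_{ij}]\in M_n(\mathcal{A})$ and all $n$, which is the definition of local completely contractive.

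The only mild subtlety — and the step I'd treat most carefully — is the bookkeeping that the same $\lambda$ that witnesses continuity of $\varphi$ for a given $\iota$ also works uniformly for all amplifications $\varphi^{(n)}$; this is because the factorization $\varphi|_{\mathcal{H}_{\iota}}=\varphi_{\iota\lambda}\circ\pi_{\lambda}^{\mathcal{A}}$ amplifies to $\varphi^{(n)}|_{\mathcal{H}_{\iota}^{\oplus n}}=\varphi_{\iota\lambda}^{(n)}\circ(\pi_{\lambda}^{\mathcal{A}})^{(n)}$ with the \emph{same} $\lambda$, so the single $C^{\ast}$-level estimate $\|\varphi_{\iota\lambda}^{(n)}\|\le 1$ (valid for all $n$ simultaneously, since $\|\varphi_{\iota\lambda}\|_{\mathrm{cb}}=\|\varphi_{\iota\lambda}(1)\|$) does all the work. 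No further obstacle arises; the proof is essentially an application of the $C^{\ast}$-algebraic norm identity for unital completely positive maps combined with the projective-limit description of $\mathcal{A}$ and $C^{\ast}(\mathcal{D}_{\mathcal{E}})$.
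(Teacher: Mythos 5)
Your proposal is correct and follows essentially the same route as the paper: factor $\varphi$ through completely positive maps $\varphi_{\iota\lambda}:\mathcal{A}_{\lambda}\to B(\mathcal{H}_{\iota})$ via \cite[Proposition 3.1]{MJ3}, use the $C^{\ast}$-algebraic identity $\Vert \varphi_{\iota\lambda}^{(n)}\Vert=\Vert \varphi_{\iota\lambda}(1)\Vert=\Vert P|_{\mathcal{H}_{\iota}}\Vert\leq 1$ (the paper cites \cite[Proposition 1.2.10]{AW} for this), and transfer the bound back to get $\Vert \varphi^{(n)}([a_{ij}])\Vert_{\iota}\leq p_{\lambda}^{n}([a_{ij}])$. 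Your extra remark that the same $\lambda$ serves all amplifications is exactly the (implicit) bookkeeping in the paper's argument.
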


\begin{proof}
Since $\varphi $ is a local completely positive map, by \cite[Proposition
3.1 ]{MJ3}, for each $\iota \in \Upsilon ,$ there exist $\lambda \in \Lambda
$ and a completely positive map $\varphi _{\iota \lambda }:\mathcal{A}%
_{\lambda }\rightarrow B(\mathcal{H}_{\iota })$ such that $\varphi _{\iota
\lambda }\left( \pi _{\lambda }^{\mathcal{A}}\left( a\right) \right) =\left.
\varphi \left( a\right) \right\vert _{\mathcal{H}_{\iota }}.$ Moreover, by
\cite[Proposition 1.2.10]{AW},
\begin{equation*}
\left\Vert \varphi _{\iota \lambda }^{\left( n\right) }\right\Vert
=\left\Vert \varphi _{\iota \lambda }\left( \pi _{\lambda }^{\mathcal{A}%
}\left( 1_{\mathcal{A}}\right) \right) \right\Vert _{B(\mathcal{H}_{\iota
})}=\left\Vert \left. P\right\vert _{\mathcal{H}_{\iota }}\right\Vert _{B(%
\mathcal{H}_{\iota })}\leq 1
\end{equation*}%
for all $n$. Then
\begin{eqnarray*}
\left\Vert \varphi ^{\left( n\right) }\left( \left[ a_{ij}\right]
_{i,j=1}^{n}\right) \right\Vert _{\iota } &=&\left\Vert \varphi
_{\iota \lambda }^{\left( n\right) }\left( \left[ \pi _{\lambda }^{\mathcal{A%
}}\left( a_{ij}\right) \right] _{i,j=1}^{n}\right) \right\Vert _{B(\mathcal{H%
}_{\iota }^{\oplus n})} \\
&\leq &\left\Vert \varphi _{\iota \lambda }^{\left( n\right) }\right\Vert \left\Vert \left[ \pi _{\lambda }^{\mathcal{A}}\left( a_{ij}\right) %
\right] _{i,j=1}^{n}\right\Vert _{M_{n}\left( \mathcal{A}_{\lambda }\right)
}\leq p_{\lambda }^{n}\left( \left[ a_{ij}\right] _{i,j=1}^{n}\right)
\end{eqnarray*}%
for all $\left[ a_{ij}\right] _{i,j=1}^{n}\in M_{n}\left( \mathcal{A}\right)
$ and for all $n.$ Therefore, $\varphi $ is completely contractive.
\end{proof}

\begin{proposition}
\label{1}Let $\varphi \in \mathcal{CPCC}_{\text{loc,}P}(\mathcal{A},C^{\ast
}(\mathcal{D}_{\mathcal{E}}))\ $and $\left( \pi _{\varphi },V_{\varphi },\{%
\mathcal{H}^{\varphi },\mathcal{E}^{\varphi },\mathcal{D}_{\mathcal{E}%
^{\varphi }}\}\right) $ be a minimal Stinespring dilation of $\varphi $.
Then $\varphi $ is extremal in $\mathcal{CPCC}_{\text{loc,}P}(\mathcal{A}%
,C^{\ast }(\mathcal{D}_{\mathcal{E}}))$ if and only if for any $T\in \pi
_{\varphi }\left( \mathcal{A}\right) ^{^{\prime }}\cap C^{\ast }(\mathcal{D}%
_{\mathcal{E}^{\varphi }})$, the condition $V_{\varphi }^{\ast }TV_{\varphi
}=0$ implies $T=0.$
\end{proposition}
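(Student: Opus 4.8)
The plan is to reduce the extremality question to a statement about the ``Radon--Nikodym'' operators living in $\pi_{\varphi}(\mathcal{A})^{\prime}\cap C^{\ast}(\mathcal{D}_{\mathcal{E}^{\varphi}})$, in the same spirit as the proof of Proposition \ref{0}. Suppose first that $\varphi$ is \emph{not} extremal in $\mathcal{CPCC}_{\text{loc,}P}(\mathcal{A},C^{\ast}(\mathcal{D}_{\mathcal{E}}))$, say $\varphi=\tfrac12\psi_1+\tfrac12\psi_2$ with $\psi_1\neq\psi_2$ and $\psi_i(1_{\mathcal{A}})=P$. Then $\psi_i\leq 2\varphi$, so each $\psi_i$ is dominated by $\varphi$ (after absorbing the factor $2$, or more directly: $\tfrac12\psi_i\leq\varphi$, hence $\varphi\geq\tfrac12\psi_i$), and the Radon--Nikodym theorem \cite[Theorem 4.5]{BGK} produces operators $T_i\in\pi_{\varphi}(\mathcal{A})^{\prime}\cap C^{\ast}(\mathcal{D}_{\mathcal{E}^{\varphi}})$ with $0\leq T_i\leq\mathrm{id}_{\mathcal{H}}$ and $\tfrac12\psi_i=\varphi_{T_i}$, i.e.\ $\psi_i=\varphi_{2T_i}$. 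Set $T=T_1-T_2$. Because $\tfrac12(\psi_1+\psi_2)=\varphi=\varphi_{\mathrm{id}}$ and $\zeta$ (the order isomorphism of \cite[Corollary 4.6]{BGK}, which is affine and injective) sends $\tfrac12(2T_1)+\tfrac12(2T_2)=T_1+T_2$ to $\tfrac12\psi_1+\tfrac12\psi_2=\varphi$, injectivity of $\zeta$ forces $T_1+T_2=\mathrm{id}_{\mathcal{H}}$ on $\mathcal{D}_{\mathcal{E}^{\varphi}}$; since $\psi_1\neq\psi_2$ we get $T\neq 0$. Now using $\psi_i(1_{\mathcal{A}})=P=\varphi(1_{\mathcal{A}})$ together with $\psi_i(1_{\mathcal{A}})\subseteq V_{\varphi}^{\ast}(2T_i)\pi_{\varphi}(1_{\mathcal{A}})V_{\varphi}=2V_{\varphi}^{\ast}T_iV_{\varphi}$ (recall $\pi_{\varphi}$ is unital), one obtains $2V_{\varphi}^{\ast}T_1V_{\varphi}=P=2V_{\varphi}^{\ast}T_2V_{\varphi}$ on $\mathcal{D}_{\mathcal{E}}$, whence $V_{\varphi}^{\ast}TV_{\varphi}=0$. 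This exhibits a nonzero $T$ in $\pi_{\varphi}(\mathcal{A})^{\prime}\cap C^{\ast}(\mathcal{D}_{\mathcal{E}^{\varphi}})$ with $V_{\varphi}^{\ast}TV_{\varphi}=0$, proving the contrapositive of one direction.

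For the converse, suppose there is a nonzero $T\in\pi_{\varphi}(\mathcal{A})^{\prime}\cap C^{\ast}(\mathcal{D}_{\mathcal{E}^{\varphi}})$ with $V_{\varphi}^{\ast}TV_{\varphi}=0$. Since this set is a $\ast$-algebra and $T\neq 0$, we may replace $T$ by $T^{\ast}T$ (which is still nonzero and still satisfies $V_{\varphi}^{\ast}T^{\ast}TV_{\varphi}=\|TV_{\varphi}\cdot\|$-type expression $=0$ because $V_{\varphi}^{\ast}TV_{\varphi}=0$ does not immediately give this --- so instead I take $T$ self-adjoint to begin with by splitting into real and imaginary parts $\tfrac12(T+T^{\ast})$ and $\tfrac{1}{2i}(T-T^{\ast})$, at least one of which is a nonzero self-adjoint element still killed in the compression, using that $V_{\varphi}^{\ast}TV_{\varphi}=0$ implies $V_{\varphi}^{\ast}T^{\ast}V_{\varphi}=0$). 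Having a nonzero self-adjoint $T$, rescale so that $-\mathrm{id}_{\mathcal{H}}\leq T\leq\mathrm{id}_{\mathcal{H}}$, and set $T_{\pm}=\tfrac12(\mathrm{id}_{\mathcal{H}}\pm T)$, so $0\leq T_{\pm}\leq\mathrm{id}_{\mathcal{H}}$, $T_{+}+T_{-}=\mathrm{id}_{\mathcal{H}}$, $T_{+}\neq T_{-}$, and both $T_{\pm}\in\pi_{\varphi}(\mathcal{A})^{\prime}\cap C^{\ast}(\mathcal{D}_{\mathcal{E}^{\varphi}})$. Define $\psi_{\pm}=\varphi_{T_{\pm}}$. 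By \cite[Proposition 4.3]{BGK} these are local $\mathcal{CPCC}$-maps, $\varphi=\varphi_{\mathrm{id}}=\varphi_{T_{+}}+\varphi_{T_{-}}$ wait --- $\varphi_{T_{+}}+\varphi_{T_{-}}=\varphi_{T_{+}+T_{-}}=\varphi$, so actually $\varphi=\tfrac12(2\psi_{+})+\tfrac12(2\psi_{-})$ is the convex decomposition I want with $2\psi_{\pm}=\varphi_{2T_{\pm}}=\varphi_{\mathrm{id}\pm T}$. Finally $\psi_{\pm}(1_{\mathcal{A}})=V_{\varphi}^{\ast}(\mathrm{id}\pm T)\pi_{\varphi}(1_{\mathcal{A}})V_{\varphi}|_{\mathcal{D}_{\mathcal{E}}}=V_{\varphi}^{\ast}V_{\varphi}\pm V_{\varphi}^{\ast}TV_{\varphi}=P\pm 0=P$ (using $\varphi(1_{\mathcal{A}})=P$ and $V_{\varphi}^{\ast}TV_{\varphi}=0$), so both lie in $\mathcal{CPCC}_{\text{loc,}P}$, and they are distinct because $\zeta$ is injective and $\mathrm{id}+T\neq\mathrm{id}-T$. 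Hence $\varphi$ is not extremal.

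The main obstacle I anticipate is bookkeeping the domain subtleties: all the compressions $V_{\varphi}^{\ast}(\cdot)\pi_{\varphi}(a)V_{\varphi}$ are only defined after restriction to $\mathcal{D}_{\mathcal{E}}$, and equalities like $\psi_i(1_{\mathcal{A}})\subseteq 2V_{\varphi}^{\ast}T_iV_{\varphi}$ hold as operators with the ``$\subseteq$'' of the excerpt, so one must check that the relevant identities among unbounded operators genuinely hold on $\mathcal{D}_{\mathcal{E}}$ rather than merely on a unitary-equivalence level. A secondary point requiring a line of care is the reduction to a self-adjoint $T$: from $V_{\varphi}^{\ast}TV_{\varphi}=0$ one gets $V_{\varphi}^{\ast}T^{\ast}V_{\varphi}=(V_{\varphi}^{\ast}TV_{\varphi})^{\ast}=0$ on the appropriate dense domain, so the real and imaginary parts of $T$ are each killed by the compression, and at least one is nonzero since $T\neq 0$; this is routine but should be stated. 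Apart from these, the argument is a direct transcription of the $C^{\ast}$-algebraic extremality criterion (Arveson) through the order isomorphism $\zeta$ and the unital normalization $\varphi(1_{\mathcal{A}})=P$.
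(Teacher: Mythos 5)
Your proof is correct and takes essentially the same route as the paper: both directions rest on the Radon--Nikodym correspondence of \cite[Theorem 4.5]{BGK}, with the converse obtained from the decomposition $\varphi=\tfrac12\varphi_{(\mathrm{id}+T)}+\tfrac12\varphi_{(\mathrm{id}-T)}$ (you state it contrapositively, the paper argues directly from extremality), and the other direction from compressing the difference of the two Radon--Nikodym operators of a nontrivial convex splitting. Your explicit reduction to a self-adjoint $T$ with $-\mathrm{id}\leq T\leq\mathrm{id}$ is in fact a bit more careful than the paper's ``we can suppose $0\leq T\leq\mathrm{id}$''; the only blemishes are harmless bookkeeping slips (the unneeded aside $T_{1}+T_{2}=\mathrm{id}$, and $\psi_{\pm}(1_{\mathcal{A}})$ should read $2\psi_{\pm}(1_{\mathcal{A}})=\varphi_{\mathrm{id}\pm T}(1_{\mathcal{A}})=P$).
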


\begin{proof}
The proof is similar to the case of $C^{\ast }$-algebras \cite{AW}. Suppose
that $\varphi $ is extremal in $\mathcal{CPCC}_{\text{loc,}P}(\mathcal{A}%
,C^{\ast }(\mathcal{D}_{\mathcal{E}}))$. Let $T\in \pi _{\varphi }\left(
\mathcal{A}\right) ^{^{\prime }}\cap C^{\ast }(\mathcal{D}_{\mathcal{E}%
^{\varphi }})$ such that $V_{\varphi }^{\ast }TV_{\varphi }=0$. We can
suppose that $0\leq T\leq $id$_{\mathcal{H}}.$ Then $\varphi _{\left( \text{%
id}_{\mathcal{H}}-T\right) },$ $\varphi _{\left( \text{id}_{\mathcal{H}%
}+T\right) }\ $are local completely positive. \ Since $V_{\varphi }^{\ast
}TV_{\varphi }=0$, it follows that $\varphi _{\left( \text{id}_{\mathcal{H}%
}-T\right) }\left( 1_{\mathcal{A}}\right) =$ $\varphi _{\left( \text{id}_{%
\mathcal{H}}+T\right) }\left( 1_{\mathcal{A}}\right) =P$, and by Lemma \ref%
{11}, $\varphi _{\left( \text{id}_{\mathcal{H}}-T\right) },$ $\varphi
_{\left( \text{id}_{\mathcal{H}}+T\right) }\in \mathcal{CPCC}_{\text{loc,}P}(%
\mathcal{A},C^{\ast }(\mathcal{D}_{\mathcal{E}}))$. On the other hand, $%
\frac{1}{2}\varphi _{\left( \text{id}_{\mathcal{H}}-T\right) }+\frac{1}{2}%
\varphi _{\left( \text{id}_{\mathcal{H}}+T\right) }=\varphi $, whence, since
$\varphi $ is extremal in $\mathcal{CPCC}_{\text{loc,}P}(\mathcal{A},C^{\ast
}(\mathcal{D}_{\mathcal{E}}))$, we deduce that $\varphi _{\left( \text{id}_{%
\mathcal{H}}-T\right) }=\varphi _{\left( \text{id}_{\mathcal{H}}+T\right)
}=\varphi $. Therefore, id$_{\mathcal{H}}-T=$id$_{%
\mathcal{H}},$ \cite[Theorem 4.5]{BGK}, and then $T=0.$

Conversely,\ suppose that $\varphi $ is not extremal in $\mathcal{CPCC}_{%
\text{loc,}P}(\mathcal{A},C^{\ast }(\mathcal{D}_{\mathcal{E}}))$. Then,
there exist $\psi _{1},\psi _{2}\in \mathcal{CPCC}_{\text{loc,}P}(\mathcal{A}%
,C^{\ast }(\mathcal{D}_{\mathcal{E}})),$ $\psi _{1}\neq \psi _{2}\ $such
that $\varphi =\frac{1}{2}\psi _{1}+\frac{1}{2}\psi _{2}$.\ Since $\frac{1}{2%
}\psi _{i}\leq \varphi ,i=1,2$, by Radon Nikodym type theorem \cite[Theorem
4.5]{BGK}, there exists $T_{i}\in \pi _{\varphi }\left( \mathcal{A}\right)
^{^{\prime }}\cap C^{\ast }(\mathcal{D}_{\mathcal{E}^{\varphi }})$ such that $0\leq T_i\leq $id$_{\mathcal{H}}$ and
$\psi _{i}\left( a\right) =2\left. V_{\varphi }^{\ast }T_{i}\pi _{\varphi
}\left( a\right) V_{\varphi }\right\vert _{\mathcal{D}_{\mathcal{E}}}$ for
all $a\in \mathcal{A},i=1,2$.$\ $Then,$\ P=2V_{\varphi }^{\ast
}T_{i}V_{\varphi },i=1,2,$ whence $0=V_{\varphi }^{\ast }\left(
T_{1}-T_{2}\right) V_{\varphi },$ and consequently $T_{1}=T_{2}$. This
implies that $\psi _{1}=\psi _{2}$, a contradiction.
\end{proof}

\begin{corollary}
Let $\varphi \in \mathcal{CPCC}_{\text{loc,id}_{\mathcal{D}_{\mathcal{E}}}}(%
\mathcal{A},C^{\ast }(\mathcal{D}_{\mathcal{E}}))$. If $\varphi $ is a local
contractive $\ast $-morphism, then $\varphi $ is extremal in $\mathcal{CPCC}%
_{\text{loc,id}_{\mathcal{D}_{\mathcal{E}}}}(\mathcal{A},C^{\ast }(\mathcal{D%
}_{\mathcal{E}})).$
\end{corollary}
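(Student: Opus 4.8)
The plan is to apply Proposition \ref{1} directly. Let $\varphi$ be a local contractive $\ast$-morphism with $\varphi(1_{\mathcal A}) = \mathrm{id}_{\mathcal D_{\mathcal E}}$, and let $\left(\pi_\varphi, V_\varphi, \{\mathcal H^\varphi, \mathcal E^\varphi, \mathcal D_{\mathcal E^\varphi}\}\right)$ be a minimal Stinespring dilation of $\varphi$. By Proposition \ref{1}, it suffices to show that whenever $T \in \pi_\varphi(\mathcal A)' \cap C^\ast(\mathcal D_{\mathcal E^\varphi})$ satisfies $V_\varphi^\ast T V_\varphi = 0$, then $T = 0$. The key structural fact is that when $\varphi$ is itself a $\ast$-morphism, its minimal Stinespring dilation is essentially trivial: one may take $\pi_\varphi = \varphi$ and $V_\varphi$ the inclusion. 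Indeed, since $\varphi(1_{\mathcal A}) = \mathrm{id}_{\mathcal D_{\mathcal E}}$, Theorem \ref{s} gives that $V_\varphi$ is an isometry; and by the minimality condition $\mathcal H_\iota^\varphi = [\pi_\varphi(\mathcal A) V_\varphi \mathcal H_\iota]$, together with the $\ast$-morphism property $\varphi(a)\varphi(b) = \varphi(ab)$, the range of $V_\varphi$ is already invariant, so $V_\varphi$ is unitary onto $\mathcal H^\varphi$ (intertwining the quantized domains).

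First I would establish this reduction carefully. The map $a \mapsto V_\varphi^\ast \pi_\varphi(a) V_\varphi$ agrees with $\varphi$ on $\mathcal D_{\mathcal E}$; since $\varphi$ is multiplicative and $\varphi(1) = \mathrm{id}$, one checks that $V_\varphi V_\varphi^\ast$ commutes with $\pi_\varphi(\mathcal A)$ (the standard argument: for a $\ast$-morphism, $V_\varphi^\ast \pi_\varphi(a^\ast a) V_\varphi = \varphi(a^\ast a) = \varphi(a)^\ast \varphi(a) = V_\varphi^\ast \pi_\varphi(a)^\ast V_\varphi V_\varphi^\ast \pi_\varphi(a) V_\varphi$, which forces $\pi_\varphi(a) V_\varphi$ to lie in the range of $V_\varphi$ modulo the kernel of $V_\varphi^\ast$, and by minimality these span everything). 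Hence $V_\varphi$ maps $\mathcal H$ unitarily onto $\mathcal H^\varphi$ and restricts to a unitary intertwiner of $\mathcal E$ and $\mathcal E^\varphi$.

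Given this, the verification of the criterion is immediate: if $T \in \pi_\varphi(\mathcal A)' \cap C^\ast(\mathcal D_{\mathcal E^\varphi})$ and $V_\varphi^\ast T V_\varphi = 0$, then since $V_\varphi$ is unitary, $T = V_\varphi(V_\varphi^\ast T V_\varphi)V_\varphi^\ast = 0$. By Proposition \ref{1}, $\varphi$ is extremal in $\mathcal{CPCC}_{\text{loc,id}_{\mathcal D_{\mathcal E}}}(\mathcal A, C^\ast(\mathcal D_{\mathcal E}))$.

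The main obstacle is the reduction step — proving that the minimal Stinespring dilation of a unital $\ast$-morphism is unitarily equivalent to the morphism itself in the locally $C^\ast$ / quantized-domain setting. In the $C^\ast$-algebra case this is classical, but here one must be attentive to the unbounded operators and to the compatibility of $V_\varphi$ with the filtered families $\mathcal E$ and $\mathcal E^\varphi$; in particular one should verify that $V_\varphi(\mathcal H_\iota) = \mathcal H_\iota^\varphi$ rather than merely $V_\varphi(\mathcal H_\iota)\subseteq \mathcal H_\iota^\varphi$, which uses the explicit description $\mathcal H_\iota^\varphi = [\pi_\varphi(\mathcal A) V_\varphi \mathcal H_\iota]$ from Theorem \ref{s}(3) combined with $\pi_\varphi(\mathcal A) V_\varphi \mathcal H_\iota = V_\varphi \varphi(\mathcal A)\mathcal H_\iota \subseteq V_\varphi \mathcal H_\iota$. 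Alternatively, one can bypass the full unitary-equivalence statement and argue directly: any $T$ as above satisfies $T \geq 0$ (after rescaling) with $V_\varphi^\ast T V_\varphi = 0$, so $T^{1/2}V_\varphi = 0$, hence $T^{1/2}\pi_\varphi(a)V_\varphi = \pi_\varphi(a)T^{1/2}V_\varphi = 0$ for all $a$, and by the minimality $\mathcal H_\iota^\varphi = [\pi_\varphi(\mathcal A)V_\varphi \mathcal H_\iota]$ we get $T^{1/2} = 0$ on a dense set, so $T = 0$. This direct route is probably cleaner and avoids delicate identifications of quantized domains.
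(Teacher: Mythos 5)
Your main argument is correct and it uses the same key tool the paper intends, namely the extremality criterion of Proposition \ref{1}; the corollary is stated in the paper without proof precisely because that criterion makes it short. Your execution, however, is heavier than necessary: instead of taking an arbitrary minimal Stinespring dilation and proving $V_{\varphi}$ is unitary (via the multiplicativity trick $(\mathrm{id}-V_{\varphi}V_{\varphi}^{\ast})\pi_{\varphi}(a)V_{\varphi}\xi=0$ and the identification $V_{\varphi}\mathcal{H}_{\iota}=\mathcal{H}_{\iota}^{\varphi}$), you can simply observe, as the paper itself does later in the proof of Theorem \ref{C}(3), that since $\varphi$ is a unital local contractive $\ast$-morphism with $\varphi(1_{\mathcal{A}})=\mathrm{id}_{\mathcal{D}_{\mathcal{E}}}$, the triple $\left(\varphi,\mathrm{id}_{\mathcal{D}_{\mathcal{E}}},\{\mathcal{H},\mathcal{E},\mathcal{D}_{\mathcal{E}}\}\right)$ is itself a minimal Stinespring dilation of $\varphi$ (minimality: $[\varphi(\mathcal{A})\mathcal{H}_{\iota}]\supseteq[\varphi(1_{\mathcal{A}})\mathcal{H}_{\iota}]=\mathcal{H}_{\iota}$); with this choice the criterion of Proposition \ref{1} reads ``$T|_{\mathcal{D}_{\mathcal{E}}}=0$ implies $T=0$,'' which is immediate since $T$ is bounded and $\mathcal{D}_{\mathcal{E}}$ is dense. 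Your unitary-equivalence route establishes the same thing and is sound, so both arguments buy the same conclusion, yours at the cost of verifying $V_{\varphi}(\mathcal{H}_{\iota})=\mathcal{H}_{\iota}^{\varphi}$, which you do handle correctly.

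One caution about your proposed ``direct route'': the criterion in Proposition \ref{1} quantifies over \emph{all} $T\in\pi_{\varphi}(\mathcal{A})^{\prime}\cap C^{\ast}(\mathcal{D}_{\mathcal{E}^{\varphi}})$, and the claim that any such $T$ ``satisfies $T\geq 0$ after rescaling'' is not true. Passing to $T^{\ast}$ and real/imaginary parts reduces to self-adjoint $T$ (since $V_{\varphi}^{\ast}T^{\ast}V_{\varphi}=(V_{\varphi}^{\ast}TV_{\varphi})^{\ast}=0$), but a self-adjoint $T$ cannot be reduced to a positive one: writing $T=T^{+}-T^{-}$ only gives $V_{\varphi}^{\ast}T^{+}V_{\varphi}=V_{\varphi}^{\ast}T^{-}V_{\varphi}$, not that each compression vanishes, so the argument $T^{1/2}V_{\varphi}=0$ is unavailable. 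Thus the direct route, as written, only verifies the criterion for positive $T$ and does not suffice on its own; the unitarity argument (or the trivial-dilation shortcut) is the one that closes the proof.
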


\section{Tensor product of unbounded local $\mathcal{CPCC}$-maps}

Let $\mathcal{A}$ and $\mathcal{B}$ be two unital locally $C^{\ast }$%
-algebras with the topologies defined by the family of $C^{\ast }$-seminorms
$\left\{ p_{\lambda }\right\} _{\lambda \in \Lambda }$ and $\left\{
q_{\delta }\right\} _{\delta \in \Delta }$, respectively, and $\{\mathcal{H},%
\mathcal{E},\mathcal{D}_{\mathcal{E}}\}$ be a quantized domain in a Hilbert
space $\mathcal{H\ }$with $\mathcal{E=\{H}_{\iota }\mathcal{\}}_{\iota \in
\Upsilon }.$

A $\ast $-representation of $\mathcal{A}$ on a Hilbert space $\mathcal{H}$
is a continuous $\ast $-morphism $\varphi :$ $\mathcal{A}$ $\rightarrow B(%
\mathcal{H})$. For each $\lambda \in \Lambda ,$ let $\mathcal{R}_{\lambda }(%
\mathcal{A})=\{\varphi \ $is a $\ast $-representation of $\mathcal{A}%
;\left\Vert \varphi \left( a\right) \right\Vert \leq p_{\lambda }(a)$ for
all $a\in \mathcal{A}\}$. The \textit{maximal tensor product} of the locally
$C^{\ast }$-algebras $\mathcal{A}$ and $\mathcal{B}$ is the locally $C^{\ast
}$-algebra $\mathcal{A}\otimes _{\max }\mathcal{B}$ obtained by the
completion of the algebraic tensor product $\mathcal{A}\otimes _{\text{alg}}%
\mathcal{B}$ with respect to the topology given by the family of $C^{\ast }$%
-seminorms $\{\upsilon _{\left( \lambda ,\delta \right) }\}_{(\lambda
,\delta )\in \Lambda \times \Delta }$, where

$\upsilon _{\left( \lambda ,\delta \right) }\left(
\sum\limits_{k=1}^{n}a_{k}\otimes b_{k}\right) =\sup \{\left\Vert \left(
\varphi \left( \sum\limits_{k=1}^{n}a_{k}\right) \psi \left(
\sum\limits_{k=1}^{n}b_{k}\right) \right) \right\Vert ;\varphi \in \mathcal{%
R}_{\lambda }(\mathcal{A}),\psi \in \mathcal{R}_{\delta }(\mathcal{B})$

\ \ \ \ \ \ \ \ \ \ \ \ \ \ \ \ \ \ \ \ \ \ \ \ \ \ \ \ \ \ \ \ \ \ \ \ \ \
\ \ \ \ \ \ \ with commuting ranges $\}.$ 

Moreover, for each $\left( \lambda ,\delta \right) \in \Lambda \times \Delta
$, the $C^{\ast }$-algebras $\left( \mathcal{A}\otimes _{\max }\mathcal{B}%
\right) _{(\lambda ,\delta )}$ and $\mathcal{A}_{\lambda }\otimes _{\max }%
\mathcal{B}_{\delta }$ are isomorphic and $\pi_{(\lambda,\delta)}^{\mathcal{A}\otimes\mathcal{B}}(a\otimes b)=\pi_{\lambda}^{\mathcal{A}}(a)\otimes\pi_{\delta}^{\mathcal{B}}(b)$. We refer the reader to \cite{F} for
further information about locally $C^{\ast }$-algebras and tensor products
of locally $C^{\ast }$-algebras.

\begin{remark}
\label{H}

\begin{enumerate}
\item Let $\varphi \in \mathcal{CPCC}_{\text{loc}}(\mathcal{A},C^{\ast }(%
\mathcal{D}_{\mathcal{E}}))$ and $\psi \in \mathcal{CPCC}_{\text{loc}}(%
\mathcal{B},C^{\ast}(\mathcal{D}_{\mathcal{E}}))$ such that $\varphi \left(
\mathcal{A}\right) $\ and $\psi \left( \mathcal{B}\right) $ commute. By \cite%
[Proposition 3.3]{MJ3}, for each $\iota \in \Upsilon $, there exist $\lambda
\in \Lambda $ and $\delta \in \Delta $ and there exist two completely
positive and completely contractive maps $\varphi _{\iota \lambda }:\mathcal{%
A}_{\lambda }\rightarrow B\left( \mathcal{H}_{\iota }\right) ,\varphi
_{\iota \lambda }\left( \pi _{\lambda }^{\mathcal{A}}\left( a\right) \right)
=\left. \varphi \left( a\right) \right\vert _{\mathcal{H}_{\iota }}$ and $%
\psi _{\iota \delta }:\mathcal{B}_{\delta }\rightarrow B\left( \mathcal{H}%
_{\iota }\right) ,\psi _{\iota \delta }\left( \pi _{\delta }^{\mathcal{B}%
}\left( b\right) \right) =\left. \psi \left( b\right) \right\vert _{\mathcal{%
H}_{\iota }}$. Moreover, $\varphi _{\iota \lambda }\left( \mathcal{A}%
_{\lambda }\right) \ $and $\psi _{\iota \delta }\left( \mathcal{B}_{\delta
}\right) \ $commute.

\item Let $\varphi :\mathcal{A}\rightarrow C^{\ast }(\mathcal{D}_{\mathcal{E}%
}))$ be a local completely positive map. As in the case of completely
positive map on $C^{\ast }$-algebras, $\varphi $ is self adjoint. Indeed,
since for each $\iota \in \Upsilon $, there exist $\lambda \in \Lambda $ and a completely positive map
$\varphi _{\iota \lambda }:\mathcal{A}_{\lambda }\rightarrow B\left(
\mathcal{H}_{\iota }\right) $ such that $\varphi _{\iota \lambda }\left( \pi
_{\lambda }^{\mathcal{A}}\left( a\right) \right) =\left. \varphi \left(
a\right) \right\vert _{\mathcal{H}_{\iota }}$ \cite[Proposition 3.3]{MJ3},
we have
\begin{equation*}
\left. \varphi \left( a^{\ast }\right) \right\vert _{\mathcal{H}_{\iota
}}=\varphi _{\iota \lambda }\left( \pi _{\lambda }^{\mathcal{A}}\left(
a^{\ast }\right) \right) =\varphi _{\iota \lambda }\left( \pi _{\lambda }^{%
\mathcal{A}}\left( a\right) ^{\ast }\right) =\varphi _{\iota \lambda }\left(
\pi _{\lambda }^{\mathcal{A}}\left( a\right) \right) ^{\ast }=\left. \varphi
\left( a\right) ^{\ast }\right\vert _{\mathcal{H}_{\iota }}.
\end{equation*}%
Consequently, $\varphi \left( a^{\ast }\right) =\varphi \left( a\right)
^{\ast }$ for all $a\in \mathcal{A}.$
\end{enumerate}
\end{remark}

\begin{lemma}
\label{max}\ Let $\varphi \in \mathcal{CPCC}_{\text{loc,id}_{\mathcal{D}_{%
\mathcal{E}}}}(\mathcal{A},C^{\ast }(\mathcal{D}_{\mathcal{E}}))$ and $\psi
\in \mathcal{CPCC}_{\text{loc,id}_{\mathcal{D}_{\mathcal{E}}}}(\mathcal{B}%
,C^{\ast }(\mathcal{D}_{\mathcal{E}}))$ such that $\varphi \left( \mathcal{A}%
\right) $\ and $\psi \left( \mathcal{B}\right) $ commute. Then there exist a
quantized domain $\{\mathcal{K},\mathcal{F},\mathcal{D}_{\mathcal{F}}\}$,\
two local contractive $\ast $-morphisms $\pi _{1}:\mathcal{A}\rightarrow $ $%
C^{\ast }(\mathcal{D}_{\mathcal{F}})$ and $\pi _{2}:\mathcal{B}\rightarrow $
$C^{\ast }(\mathcal{D}_{\mathcal{F}})$, and an isometry $V\in B\left(
\mathcal{H},\mathcal{K}\right) $ such that:

\begin{enumerate}
\item $V\left( \mathcal{E}\right) \subseteq \mathcal{F};$

\item $\pi _{1}\left( a\right) \pi _{2}\left( b\right) =\pi _{2}\left(
b\right) \pi _{1}\left( a\right) $ for all $a\in \mathcal{A}$ and for all $%
b\in \mathcal{B}$;

\item $\varphi \left( a\right) \subseteq V^{\ast }\pi _{1}\left( a\right) V\
$for all $a\in \mathcal{A};$

\item $\psi \left( b\right) \subseteq V^{\ast }\pi _{2}\left( b\right) V\ $%
for all $b\in \mathcal{B}.$
\end{enumerate}
\end{lemma}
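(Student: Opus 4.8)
The strategy is to combine the individual minimal Stinespring dilations of $\varphi$ and $\psi$ into a single joint dilation, using the commutation of the ranges to produce a pair of commuting $\ast$-morphisms on a common quantized domain. First I would apply Theorem~\ref{s} to $\varphi$ to obtain its minimal Stinespring dilation $\left(\pi_{\varphi}, V_{\varphi}, \{\mathcal{H}^{\varphi}, \mathcal{E}^{\varphi}, \mathcal{D}_{\mathcal{E}^{\varphi}}\}\right)$, with $V_{\varphi}$ an isometry because $\varphi(1_{\mathcal{A}}) = \mathrm{id}_{\mathcal{D}_{\mathcal{E}}}$. The idea is then to build a $\ast$-morphism $\pi_{2}$ of $\mathcal{B}$ on the dilation space $\mathcal{H}^{\varphi}$ that commutes with $\pi_{\varphi}(\mathcal{A})$ and "dilates" $\psi$ through $V_{\varphi}$; taking $\mathcal{K} = \mathcal{H}^{\varphi}$, $\mathcal{F} = \mathcal{E}^{\varphi}$, $\pi_{1} = \pi_{\varphi}$, $V = V_{\varphi}$ would then give conditions (1), (2) (with $\pi_{2}$), and (3). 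Condition (1) is immediate from Theorem~\ref{s}(1).

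To construct $\pi_{2}$, I would work level by level through the $C^{\ast}$-algebra quotients. By Remark~\ref{H}(1), for each $\iota \in \Upsilon$ there are $\lambda \in \Lambda$, $\delta \in \Delta$ and completely positive completely contractive maps $\varphi_{\iota\lambda}: \mathcal{A}_{\lambda} \to B(\mathcal{H}_{\iota})$ and $\psi_{\iota\delta}: \mathcal{B}_{\delta} \to B(\mathcal{H}_{\iota})$ with commuting ranges, representing $\varphi$ and $\psi$ restricted to $\mathcal{H}_{\iota}$. The minimality condition Theorem~\ref{s}(3), $\mathcal{H}_{\iota}^{\varphi} = [\pi_{\varphi}(\mathcal{A}) V_{\varphi} \mathcal{H}_{\iota}]$, together with the compatibility between the quotient level representations $\pi_{\varphi,\iota\lambda}$ of $\mathcal{A}_{\lambda}$ on $\mathcal{H}_{\iota}^{\varphi}$ and the unitary uniqueness of the (bounded) Stinespring dilation, lets me invoke the classical $C^{\ast}$-algebra fact (Arveson, \cite{AW}): given commuting CPCC maps into $B(\mathcal{H}_{\iota})$ with $\varphi_{\iota\lambda}$ unital, there is a unique representation $\rho_{\iota\delta}$ of $\mathcal{B}_{\delta}$ on the minimal dilation space commuting with $\pi_{\varphi,\iota\lambda}(\mathcal{A}_{\lambda})$ and compressing to $\psi_{\iota\delta}$. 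Assembling these $\rho_{\iota\delta}$ compatibly over the directed set $\Upsilon$ (using that the $\mathcal{H}_{\iota}^{\varphi}$ are upward filtered and the uniqueness forces the compatibility on overlaps) produces $\pi_{2}: \mathcal{B} \to C^{\ast}(\mathcal{D}_{\mathcal{E}^{\varphi}})$, and one checks it is a local contractive $\ast$-morphism by the bound $\|\rho_{\iota\delta}\| \le 1$ inherited from the $C^{\ast}$-level construction and Remark~stating local contractivity $\Leftrightarrow$ continuity. Conditions (3) and (4) then follow: (3) is Theorem~\ref{s}(2) for $\varphi$, and (4) holds since $\psi(b) \subseteq V_{\varphi}^{*} \pi_{2}(b) V_{\varphi}$ by the compression property at each level, while (2) is the commutation built into the construction.

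The main obstacle I anticipate is handling the unbounded/local features carefully: one cannot dilate $\psi$ directly on $\mathcal{H}$ and must instead descend to each $C^{\ast}$-algebra $\mathcal{A}_{\lambda}$, $\mathcal{B}_{\delta}$, do the bounded construction there, and then \emph{coherently glue} the resulting representations across the directed family $\{\mathcal{H}_{\iota}^{\varphi}\}_{\iota}$ so that the limit operator genuinely lands in $C^{\ast}(\mathcal{D}_{\mathcal{E}^{\varphi}})$ (i.e. preserves each $\mathcal{H}_{\iota}^{\varphi}$ and each $\mathcal{H}_{\iota}^{\varphi \perp} \cap \mathcal{D}_{\mathcal{E}^{\varphi}}$). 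The gluing is where the uniqueness clause in Arveson's commutant lifting is essential: for $\iota \le \iota'$ the representation $\rho_{\iota'\delta'}$ restricted appropriately must agree with $\rho_{\iota\delta}$, and without the uniqueness there would be no canonical choice. A secondary technical point is matching up the index pairs $(\lambda, \delta)$ as $\iota$ varies (the family of seminorms on $\mathcal{A} \otimes_{\max} \mathcal{B}$ is indexed by $\Lambda \times \Delta$), but since we only need existence of \emph{some} dominating $\lambda$ and $\delta$ for each $\iota$, this is a routine cofinality argument rather than a real obstruction.
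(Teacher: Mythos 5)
There is a genuine gap, and it sits at the heart of your construction: the ``classical $C^{\ast}$-algebra fact'' you attribute to Arveson is false. You claim that if $\varphi_{\iota\lambda}:\mathcal{A}_{\lambda}\rightarrow B(\mathcal{H}_{\iota})$ is unital CP with minimal Stinespring dilation $(\pi_{\varphi,\iota\lambda},V,\mathcal{H}_{\iota}^{\varphi})$ and $\psi_{\iota\delta}:\mathcal{B}_{\delta}\rightarrow B(\mathcal{H}_{\iota})$ is a CP map whose range commutes with that of $\varphi_{\iota\lambda}$, then there is a \emph{representation} $\rho_{\iota\delta}$ of $\mathcal{B}_{\delta}$ on $\mathcal{H}_{\iota}^{\varphi}$ commuting with $\pi_{\varphi,\iota\lambda}(\mathcal{A}_{\lambda})$ and compressing to $\psi_{\iota\delta}$. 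Take $\mathcal{A}_{\lambda}=\mathbb{C}$: then the minimal dilation of the unital map $\varphi_{\iota\lambda}$ is $\mathcal{H}_{\iota}$ itself with $V=\mathrm{id}$, so the compression is the identity and your claim would force $\psi_{\iota\delta}$ itself to be multiplicative --- which a unital CP map (e.g.\ the normalized trace on $M_{2}$, or any non-homomorphic unital CP map, whose range trivially commutes with $\mathbb{C}\cdot\mathrm{id}$) is not. Arveson's Radon--Nikodym/commutant machinery gives positive contractions in $\pi_{\varphi}(\mathcal{A})^{\prime}$ corresponding to CP maps \emph{dominated by} $\varphi$; it does not lift a commuting CP map of a \emph{different} algebra $\mathcal{B}$ to a $\ast$-representation on $\varphi$'s dilation space. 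What is true is the weaker statement that a CP map $E$ of $\mathcal{B}$ into $\pi_{\varphi}(\mathcal{A})^{\prime}\cap C^{\ast}(\mathcal{D}_{\mathcal{E}^{\varphi}})$ with the compression property exists --- but in this paper that is Theorem \ref{Help}, whose hypothesis is an already-given local $\mathcal{CPCC}$-map on $\mathcal{A}\otimes_{\max}\mathcal{B}$; since the existence of such a joint map is exactly what Lemma \ref{max} (via Proposition \ref{tmax}) is used to establish, invoking it here would be circular, and in any case it produces a CP map, not the $\ast$-morphism $\pi_{2}$ the lemma requires.

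The paper avoids this by dilating $\varphi$ and $\psi$ \emph{simultaneously} rather than dilating $\varphi$ first: it defines a sesquilinear form on $\mathcal{A}\otimes_{\mathrm{alg}}\mathcal{B}\otimes_{\mathrm{alg}}\mathcal{D}_{\mathcal{E}}$ by
\begin{equation*}
\left\langle c\otimes d\otimes \eta ,\,a\otimes b\otimes \xi \right\rangle
=\left\langle \eta ,\varphi \left( c^{\ast }a\right) \psi \left( d^{\ast
}b\right) \xi \right\rangle ,
\end{equation*}
proves positive semidefiniteness by passing to the bounded maps $\varphi_{\iota\lambda},\psi_{\iota\delta}$ and writing the positive matrix $\left[ \psi_{\iota\delta}\left( \pi_{\delta}^{\mathcal{B}}(b_{j})^{\ast}\pi_{\delta}^{\mathcal{B}}(b_{i})\right) \right]$ as $\left[\sum_{k}x_{ki}^{\ast}x_{kj}\right]$ with the $x_{ki}$ in the commutant of $\varphi_{\iota\lambda}(\mathcal{A}_{\lambda})$ (this is where the commutation of the ranges is actually used), and then takes $\mathcal{K}$ to be the completion, $\mathcal{K}_{\iota}$ the closures of the images of $\mathcal{A}\otimes\mathcal{B}\otimes\mathcal{H}_{\iota}$, $\pi_{1},\pi_{2}$ the left multiplications on the two tensor factors, and $V\xi=1_{\mathcal{A}}\otimes 1_{\mathcal{B}}\otimes\xi+\mathcal{N}$. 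Your gluing and cofinality concerns are secondary; the step that would fail is the commutant-lifting-to-a-representation step itself, so the level-by-level strategy cannot be repaired without replacing it by some form of joint (GNS-type) dilation of the pair $(\varphi,\psi)$.
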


\begin{proof}
By Remark \ref{H} $\left( 1\right) $, there exist $\lambda \in \Lambda $ and
$\delta \in \Delta $ and two completely positive maps $\varphi
_{\iota \lambda }:\mathcal{A}_{\lambda }\rightarrow B\left( \mathcal{H}%
_{\iota }\right) ,\varphi _{\iota \lambda }\left( \pi _{\lambda }^{\mathcal{A%
}}\left( a\right) \right) =\left. \varphi \left( a\right) \right\vert _{%
\mathcal{H}_{\iota }}$ and $\psi _{\iota \delta }:\mathcal{B}_{\delta
}\rightarrow B\left( \mathcal{H}_{\iota }\right) ,\psi _{\iota
\delta }\left( \pi _{\delta }^{\mathcal{B}}\left( b\right) \right) =\left.
\psi \left( b\right) \right\vert _{\mathcal{H}_{\iota }}$, such that $%
\varphi _{\iota \lambda }\left( \mathcal{A}_{\lambda }\right) \ $and $\psi
_{\iota \delta }\left( \mathcal{B}_{\delta }\right) \ $commute.

Let $\left\langle \cdot ,\cdot \right\rangle $ be the sesquilinear form on $%
\mathcal{A}\otimes _{\text{alg}}\mathcal{B}\otimes _{\text{alg}}\mathcal{D}_{%
\mathcal{E}}\ $defined by
\begin{equation*}
\left\langle \sum\limits_{j=1}^{m}c_{j}\otimes d_{j}\otimes \eta
_{j},\sum\limits_{i=1}^{n}a_{i}\otimes b_{i}\otimes \xi _{i}\right\rangle
=\sum\limits_{j=1}^{m}\sum\limits_{i=1}^{n}\left\langle \eta _{j},\varphi
\left( c_{j}^{\ast }a_{i}\right) \psi \left( d_{j}^{\ast }b_{i}\right) \xi
_{i}\right\rangle
\end{equation*}%
for all $a_{i},c_{j}\in \mathcal{A},$ for all $b_{i},d_{j}\in \mathcal{B}$
and for all $\xi _{i},\eta _{j}\in \mathcal{D}_{\mathcal{E}}$.$\ $To show
that the sesquilinear form $\left\langle \cdot ,\cdot \right\rangle $ is a
positive semidefinite form on $\mathcal{A}\otimes _{\text{alg}}\mathcal{B}%
\otimes _{\text{alg}}\mathcal{D}_{\mathcal{E}},$ let $\zeta
=\sum\limits_{i=1}^{n}a_{i}\otimes b_{i}\otimes \xi _{i}$. There is $\iota
\in \Upsilon $ such that $\xi _{i}\in \mathcal{H}_{\iota },i=1,...,n.$ Then
\begin{eqnarray*}
&&\left\langle \sum\limits_{i=1}^{n}a_{i}\otimes b_{i}\otimes \xi
_{i},\sum\limits_{i=1}^{n}a_{i}\otimes b_{i}\otimes \xi _{i}\right\rangle \\
&=&\sum\limits_{i,j=1}^{n}\left\langle \xi _{i},\varphi \left( a_{i}^{\ast
}a_{j}\right) \psi \left( b_{i}^{\ast }b_{j}\right) \xi _{j}\right\rangle \\
&=&\sum\limits_{i,j=1}^{n}\left\langle \xi _{i},\varphi _{\iota \lambda
}\left( \pi _{\lambda }^{\mathcal{A}}\left( a_{i}\right) ^{\ast }\pi
_{\lambda }^{\mathcal{A}}\left( a_{j}\right) \right) \psi _{\iota \delta
}\left( \pi _{\delta }^{\mathcal{B}}\left( b_{j}\right) ^{\ast }\pi _{\delta
}^{\mathcal{B}}\left( b_{i}\right) \right) \xi _{j}\right\rangle .
\end{eqnarray*}%
Since $\varphi _{\iota \lambda }:\mathcal{A}_{\lambda }\rightarrow B\left(
\mathcal{H}_{\iota }\right) $ and $\psi _{\iota \delta }:\mathcal{B}_{\delta
}\rightarrow B\left( \mathcal{H}_{\iota }\right) $ are completely
positive and $\varphi _{\iota \lambda }\left( \mathcal{A}_{\lambda }\right)
\ $and $\psi _{\iota \delta }\left( \mathcal{B}_{\delta }\right) \ $commute,
for each $i$ and $j$, there exist $x_{ki,}x_{kj}\in \varphi _{\iota \lambda }\left( \mathcal{A}_{\lambda }\right) ^{\prime
}=\{x\in B\left( \mathcal{H}_{\iota }\right) ;x\varphi _{\iota \lambda
}\left( a_{\lambda }\right) =\varphi _{\iota \lambda }\left( a_{\lambda
}\right) x$ for all $a_{\lambda }\in \mathcal{A}_{\lambda }$ $\},$ $k\in
\{1,...,n\}$ such that $\psi _{\iota \delta }\left( \pi _{\delta }^{\mathcal{%
B}}\left( b_{j}\right) ^{\ast }\pi _{\delta }^{\mathcal{B}}\left(
b_{i}\right) \right) =\sum\limits_{i=1}^{n}x_{ki}^{\ast }x_{kj}$ \cite[%
Proposition 4.23]{T} and then, for each $k\in \{1,...,n\},$ by \cite[Lemma
3.2]{T} , $\sum\limits_{i,j=1}^{n}\left\langle x_{ki}\xi _{i},\varphi
_{\iota \lambda }\left( \pi _{\lambda }^{\mathcal{A}}\left( a_{i}\right)
^{\ast }\pi _{\lambda }^{\mathcal{A}}\left( a_{j}\right) \right) x_{kj}\xi
_{j}\right\rangle \geq 0$ . Therefore, 
\begin{eqnarray*}
&&\left\langle \sum\limits_{i=1}^{n}a_{i}\otimes b_{i}\otimes \xi
_{i},\sum\limits_{i=1}^{n}a_{i}\otimes b_{i}\otimes \xi _{i}\right\rangle \\
&=&\sum\limits_{k=1}^{n}\left( \sum\limits_{i,j=1}^{n}\left\langle x_{ki}\xi
_{i},\varphi _{\iota \lambda }\left( \pi _{\lambda }^{\mathcal{A}}\left(
a_{i}\right) ^{\ast }\pi _{\lambda }^{\mathcal{A}}\left( a_{j}\right)
\right) x_{kj}\xi _{j}\right\rangle \right) \geq 0
\end{eqnarray*}%
and the sesquilinear form $\left\langle \cdot ,\cdot
\right\rangle $ is a positive semidefinite form on $\mathcal{A}\otimes _{%
\text{alg}}\mathcal{B}\otimes _{\text{alg}}\mathcal{D}_{\mathcal{E}}$. The quotient space $\left( \mathcal{A}\otimes _{\text{alg}}\mathcal{B}%
\otimes _{\text{alg}}\mathcal{D}_{\mathcal{E}}\right) /\mathcal{N},\ $where $%
\mathcal{N=\{\zeta \in A}\otimes _{\text{alg}}\mathcal{B}\otimes _{\text{alg}%
}\mathcal{D}_{\mathcal{E}};\left\langle \zeta ,\zeta \right\rangle =0\}$ is
a pre-Hilbert space.

Let $\mathcal{K}$ be the completion of the pre-Hilbert space $\left(
\mathcal{A}\otimes _{\text{alg}}\mathcal{B}\otimes _{\text{alg}}\mathcal{D}_{%
\mathcal{E}}\right) /\mathcal{N}$. For each $\iota \in \Upsilon ,$ let $%
K_{\iota }=$span$\{\sum\limits_{i=1}^{n}a_{i}\otimes b_{i}\otimes \xi _{i}+%
\mathcal{N};\xi _{i}\in \mathcal{H}_{\iota },a_{i}\in \mathcal{A},b_{i}\in
\mathcal{B}_{,}i=1,...,n\}$ be a subspace in $\mathcal{K}$, and let $%
\mathcal{K}_{\iota }$ be the closure of $K_{\iota }$ in $\mathcal{K}$. Then $%
\{\mathcal{K},\mathcal{F},\mathcal{D}_{\mathcal{F}}\}$ is a quantized domain
in the Hilbert space $\mathcal{K\ }$with $\mathcal{F=\{K}_{\iota }\mathcal{\}%
}_{\iota \in \Upsilon }$.

For each $a\in \mathcal{A}$, consider the linear map $\pi _{1}\left(
a\right) :$ $\mathcal{A}\otimes _{\text{alg}}\mathcal{B}\otimes _{\text{alg}}%
\mathcal{D}_{\mathcal{E}}\rightarrow $ $\mathcal{A}\otimes _{\text{alg}}%
\mathcal{B}\otimes _{\text{alg}}\mathcal{D}_{\mathcal{E}}$,$\ $defined by
\begin{equation*}
\pi _{1}\left( a\right) \left( c\otimes b\otimes \xi \right) =ac\otimes
b\otimes \xi \text{.}
\end{equation*}%
Let $\zeta =\sum\limits_{i=1}^{n}a_{i}\otimes b_{i}\otimes \xi _{i}$. Then there exists 
 $\iota \in \Upsilon $ such that $\xi _{i}\in \mathcal{H}_{\iota
},i=1,...,n\ $ and 
\begin{eqnarray*}
&&\left\langle \pi _{1}\left( a\right) \zeta ,\pi _{1}\left( a\right) \zeta
\right\rangle \\
&=&\sum\limits_{i,j=1}^{n}\left\langle \xi _{i},\varphi \left( a_{i}^{\ast
}a^{\ast }aa_{j}\right) \psi \left( b_{i}^{\ast }b_{j}\right) \xi
_{j}\right\rangle \\
&=&\sum\limits_{i,j=1}^{n}\left\langle \xi _{i},\varphi _{\iota \lambda
}\left( \pi _{\lambda }^{\mathcal{A}}\left( a_{i}\right) ^{\ast }\pi
_{\lambda }^{\mathcal{A}}\left( a\right) ^{\ast }\pi _{\lambda }^{\mathcal{A}%
}\left( a\right) \pi _{\lambda }^{\mathcal{A}}\left( a_{j}\right) \right)
\psi _{\iota \delta }\left( \pi _{\delta }^{\mathcal{B}}\left( b_{i}\right)
^{\ast }\pi _{\delta }^{\mathcal{B}}\left( b_{j}\right) \right) \xi
_{j}\right\rangle \\
&=&\sum\limits_{k=1}^{n}\left( \sum\limits_{i,j=1}^{n}\left\langle
x_{ki}\xi _{i},\varphi _{\iota \lambda }\left( \pi _{\lambda }^{\mathcal{A}%
}\left( a_{i}\right) ^{\ast }\pi _{\lambda }^{\mathcal{A}}\left( a\right)
^{\ast }\pi _{\lambda }^{\mathcal{A}}\left( a\right) \pi _{\lambda }^{%
\mathcal{A}}\left( a_{j}\right) \right) x_{kj}\xi _{j}\right\rangle \right)
\\
&=&\sum\limits_{k=1}^{n}\left\langle \left( x_{ki}\xi _{i}\right)
_{i=1}^{n},\varphi _{\iota \lambda }^{n}\left( B^{\ast }A^{\ast }AB\right)
\left( x_{ki}\xi _{j}\right) _{i=1}^{n}\right\rangle \\
\ \text{( where }B &=&\left[
\begin{array}{cccc}
\pi _{\lambda }^{\mathcal{A}}\left( a_{1}\right) & \cdot & \cdot \cdot & \pi
_{\lambda }^{\mathcal{A}}\left( a_{n}\right) \\
0 & \cdot & \cdot \cdot & 0 \\
\cdot & \cdot & \cdot \cdot & \cdot \\
0 & \cdot & \cdot \cdot & 0%
\end{array}%
\right] \text{ and }A=\left[
\begin{array}{cccc}
\pi _{\lambda }^{\mathcal{A}}\left( a\right) & \cdot & \cdot \cdot & 0 \\
0 & \cdot & \cdot \cdot & 0 \\
\cdot & \cdot & \cdot \cdot & \cdot \\
0 & \cdot & \cdot \cdot & 0%
\end{array}%
\right] \text{)} \\
&\leq &\sum\limits_{k=1}^{n}\left\Vert A\right\Vert _{M_{n}(\mathcal{A}%
_{\lambda })}^{2}\left\langle \left( x_{ki}\xi _{i}\right)
_{i=1}^{n},\varphi _{\iota \lambda }^{n}\left( B^{\ast }B\right) \left(
x_{ki}\xi _{j}\right) _{i=1}^{n}\right\rangle \\
&\leq &\left\Vert \pi _{\lambda }^{\mathcal{A}}\left( a\right) \right\Vert
^{2}\sum\limits_{k=1}^{n}\left( \sum\limits_{i,j=1}^{n}\left\langle
x_{ki}\xi _{i},\varphi _{\iota \lambda }\left( \pi _{\lambda }^{\mathcal{A}%
}\left( a_{i}\right) ^{\ast }\pi _{\lambda }^{\mathcal{A}}\left(
a_{j}\right) \right) x_{kj}\xi _{j}\right\rangle \right) \\
&\leq &\left\Vert \pi _{\lambda }^{\mathcal{A}}\left( a\right) \right\Vert
^{2}\sum\limits_{i,j=1}^{n}\left\langle \xi _{i},\varphi _{\iota \lambda
}\left( \pi _{\lambda }^{\mathcal{A}}\left( a_{i}\right) ^{\ast }\pi
_{\lambda }^{\mathcal{A}}\left( a_{j}\right) \right) \psi _{\iota \delta
}\left( \pi _{\delta }^{\mathcal{B}}\left( b_{i}\right) ^{\ast }\pi _{\delta
}^{\mathcal{B}}\left( b_{j}\right) \right) \xi _{j}\right\rangle \\
&=&p_{\lambda }\left( a\right) ^{2}\sum\limits_{i,j=1}^{n}\left\langle \xi
_{i},\varphi \left( a_{i}^{\ast }a_{j}\right) \psi \left( b_{i}^{\ast
}b_{j}\right) \xi _{j}\right\rangle \\
&=&p_{\lambda }\left( a\right) ^{2}\left\langle \zeta ,\zeta \right\rangle .
\end{eqnarray*}%
From the above relation, we conclude that $\pi _{1}\left( a\right) \left(
\mathcal{N}\right) \subseteq \mathcal{N}$. Therefore $\pi _{1}\left(
a\right) $ determines a linear operator on\ \ $\left( \mathcal{A}\otimes _{%
\text{alg}}\mathcal{B}\otimes _{\text{alg}}\mathcal{D}_{\mathcal{E}}\right) /%
\mathcal{N}$, denoted by $\pi _{1}\left( a\right) $ too. So, $\pi _{1}\left(
a\right) $ is a densely defined linear operator on $\mathcal{K}$. Moreover, $%
\left. \pi _{1}\left( a\right) \right\vert _{\mathcal{K}_{\iota }}\in B(%
\mathcal{K}_{\iota })$ and $\left\Vert \left. \pi _{1}\left( a\right)
\right\vert _{\mathcal{K}_{\iota }}\right\Vert \leq p_{\lambda _{\iota
}}\left( a\right) $. From
\begin{eqnarray*}
&&\left\langle \pi _{1}\left( a\right) \left(
\sum\limits_{i=1}^{n}a_{i}\otimes b_{i}\otimes \xi _{i}+\mathcal{N}\right)
,\left( \sum\limits_{j=1}^{m}c_{j}\otimes d_{j}\otimes \eta _{j}+\mathcal{N}%
\right) \right\rangle \\
&=&\sum\limits_{i=1}^{n}\sum\limits_{j=1}^{m}\left\langle \xi _{i},\varphi
\left( a_{i}^{\ast }a^{\ast }c_{j}\right) \psi \left( b_{i}^{\ast
}d_{j}\right) \eta _{j}\right\rangle \\
&=&\left\langle \left( \sum\limits_{i=1}^{n}a_{i}\otimes b_{i}\otimes \xi
_{i}+\mathcal{N}\right) ,\left( \sum\limits_{j=1}^{m}c_{j}\otimes a^{\ast
}d_{j}\otimes \eta _{j}+\mathcal{N}\right) \right\rangle
\end{eqnarray*}%
\begin{eqnarray*}
&=&\left\langle \left( \sum\limits_{i=1}^{n}a_{i}\otimes b_{i}\otimes \xi
_{i}+\mathcal{N}\right) ,\pi _{1}\left( a^{\ast }\right) \left(
\sum\limits_{j=1}^{m}c_{j}\otimes d_{j}\otimes \eta _{j}+\mathcal{N}\right)
\right\rangle
\end{eqnarray*}%
for all $a_{i},c_{j}\in \mathcal{A},b_{i},d_{j}\in \mathcal{B}$ and $\xi
_{i},\eta _{j}\in \mathcal{H}_{\iota }$, we conclude that $\left. \pi
_{1}\left( a^{\ast }\right) \right\vert _{\mathcal{K}_{\iota }}=\left. \pi
_{1}\left( a\right) \right\vert _{\mathcal{K}_{\iota }}^{\ast }$. Therefore,
$\pi _{1}\left( a\right) \in C^{\ast }\left( \mathcal{D}_{\mathcal{F}%
}\right) $ and $\pi _{1}\left( a^{\ast }\right) =\pi _{1}\left( a\right)
^{\ast }$. Thus, we obtained a unital $\ast $-morphism $\pi _{1}:\mathcal{A}%
\rightarrow C^{\ast }\left( \mathcal{D}_{\mathcal{F}}\right) $, which is a
local contractive map, since for each $\iota \in \Upsilon ,$ there is $%
\lambda \in \Lambda $ such that $\left\Vert \pi _{1}\left( a\right)
\right\Vert_{\iota} \leq p_{\lambda }\left(
a\right) $ for all $a\in \mathcal{A}.$

In the same way, we obtain a local contractive unital $\ast $-morphism $\pi
_{2}:\mathcal{B}\rightarrow C^{\ast }\left( \mathcal{D}_{\mathcal{F}}\right)
$, such that, for each $b\in \mathcal{B},$
\begin{equation*}
\pi _{2}\left( b\right) \left( c\otimes d\otimes \xi +\mathcal{N}\right)
=c\otimes bd\otimes \xi +\mathcal{N}\text{.}
\end{equation*}%
From
\begin{equation*}
\pi _{1}\left( a\right) \pi _{2}\left( b\right) \left( c\otimes d\otimes \xi
+\mathcal{N}\right) =ac\otimes bd\otimes \xi +\mathcal{N=}\pi _{2}\left(
b\right) \pi _{1}\left( a\right) \left( c\otimes d\otimes \xi +\mathcal{N}%
\right)
\end{equation*}%
for all $a,c\in \mathcal{A},b,d\in \mathcal{B}$ and $\xi \in \mathcal{D}_{%
\mathcal{E}},$ we deduce that $\pi _{1}\left( a\right) \pi _{2}\left(
b\right) =\pi _{2}\left( b\right) \pi _{1}\left( a\right) $ for all $a,c\in
\mathcal{A}$ and $b,d\in \mathcal{B}.$

Consider a linear map $V:\mathcal{D}_{\mathcal{E}}\rightarrow \mathcal{D}_{%
\mathcal{F}}$ defined by $V\xi =1_{\mathcal{A}}\otimes 1_{\mathcal{B}%
}\otimes \xi +\mathcal{N}.$ Clearly, $V\left( \mathcal{H}_{\iota }\right)
\subseteq \mathcal{K}_{\iota }$ for all $\iota \in \Upsilon $. For each $%
\iota \in \Upsilon $, we have
\begin{equation*}
\left\langle V\xi ,V\xi \right\rangle =\left\langle \xi ,\varphi \left( 1_{%
\mathcal{A}}\right) \psi \left( 1_{\mathcal{B}}\right) \xi \right\rangle
=\left\langle \xi ,\xi \right\rangle
\end{equation*}%
for all $\xi \in \mathcal{H}_{\iota }$. Therefore, $V$ extends to an
isometry from $\mathcal{H}$ to $\mathcal{K}$, denoted by $V$ too.

Let $a\in \mathcal{A},$ and $b\in \mathcal{B}$. From
\begin{eqnarray*}
\left\langle V^{\ast }\pi _{1}\left( a\right) V\xi ,\eta \right\rangle
&=&\left\langle \pi _{1}\left( a\right) V\xi ,V\eta \right\rangle
=\left\langle a\otimes 1_{\mathcal{B}}\otimes \xi ,1_{\mathcal{A}}\otimes 1_{%
\mathcal{B}}\otimes \eta \right\rangle \\
&=&\left\langle \xi ,\varphi \left( a^{\ast }\right) \psi \left( 1_{\mathcal{%
B}}\right) \eta \right\rangle =\left\langle \varphi \left( a\right) \xi
,\eta \right\rangle
\end{eqnarray*}%
and
\begin{eqnarray*}
\left\langle V^{\ast }\pi _{2}\left( b\right) V\xi ,\eta \right\rangle
&=&\left\langle \pi _{2}\left( b\right) V\xi ,V\eta \right\rangle
=\left\langle 1_{\mathcal{A}}\otimes b\otimes \xi ,1_{\mathcal{A}}\otimes 1_{%
\mathcal{B}}\otimes \eta \right\rangle \\
&=&\left\langle \xi ,\varphi \left( 1_{\mathcal{A}}\right) \psi \left(
b^{\ast }\right) \eta \right\rangle =\left\langle \psi \left( b\right) \xi
,\eta \right\rangle .
\end{eqnarray*}%
for all $\xi ,\eta \in \mathcal{D}_{\mathcal{E}}$, we deduce that $\varphi
\left( a\right) \subseteq V^{\ast }\pi _{1}\left( a\right) V$ and $\psi
\left( b\right) \subseteq V^{\ast }\pi _{2}\left( b\right) V$, and the lemma
is proved.
\end{proof}

\begin{proposition}
\label{tmax}Let $\varphi \in \mathcal{CPCC}_{\text{loc,id}_{\mathcal{D}_{%
\mathcal{E}}}}(\mathcal{A},C^{\ast }(\mathcal{D}_{\mathcal{E}}))$ and $\psi
\in \mathcal{CPCC}_{\text{loc,id}_{\mathcal{D}_{\mathcal{E}}}}(\mathcal{B}%
,C^{\ast }(\mathcal{D}_{\mathcal{E}}))$ such that $\varphi \left( \mathcal{A}%
\right) $\ and $\psi \left( \mathcal{B}\right) $ commute. Then there exists
a unique unital local \ $\mathcal{CPCC}$-map, $\varphi \otimes \psi :%
\mathcal{A\otimes }_{\max }\mathcal{B}\rightarrow C^{\ast }(\mathcal{D}_{%
\mathcal{E}})$, such that
\begin{equation*}
\left( \varphi \otimes \psi \right) \left( a\otimes b\right) =\varphi \left(
a\right) \psi \left( b\right)
\end{equation*}%
for all $a\in \mathcal{A}$ and for all $b\in \mathcal{B}$.
\end{proposition}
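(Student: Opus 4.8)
The approach is to realise $\varphi\otimes\psi$ as the compression, by a single isometry, of a local contractive $\ast$-morphism on $\mathcal{A}\otimes_{\max}\mathcal{B}$ assembled from the data of Lemma \ref{max}. First I apply Lemma \ref{max} to $\varphi$ and $\psi$, obtaining a quantized domain $\{\mathcal{K},\mathcal{F},\mathcal{D}_{\mathcal{F}}\}$, local contractive unital $\ast$-morphisms $\pi_{1}:\mathcal{A}\rightarrow C^{\ast}(\mathcal{D}_{\mathcal{F}})$ and $\pi_{2}:\mathcal{B}\rightarrow C^{\ast}(\mathcal{D}_{\mathcal{F}})$ with $\pi_{1}(\mathcal{A})$ and $\pi_{2}(\mathcal{B})$ commuting, and an isometry $V:\mathcal{H}\rightarrow\mathcal{K}$ with $V(\mathcal{E})\subseteq\mathcal{F}$, $\varphi(a)\subseteq V^{\ast}\pi_{1}(a)V$ and $\psi(b)\subseteq V^{\ast}\pi_{2}(b)V$. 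I also retain the explicit model from the proof of that lemma: $V\xi=1_{\mathcal{A}}\otimes 1_{\mathcal{B}}\otimes\xi+\mathcal{N}$, the subspace $\mathcal{K}_{\iota}$ is the closure of the span of the vectors $\sum_{i}a_{i}\otimes b_{i}\otimes\xi_{i}+\mathcal{N}$ with $\xi_{i}\in\mathcal{H}_{\iota}$, and the formulas $\pi_{1}(a)(c\otimes d\otimes\xi+\mathcal{N})=ac\otimes d\otimes\xi+\mathcal{N}$, $\pi_{2}(b)(c\otimes d\otimes\xi+\mathcal{N})=c\otimes bd\otimes\xi+\mathcal{N}$.

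The one genuine step is to show that $a\otimes b\mapsto\pi_{1}(a)\pi_{2}(b)$ extends to a unital local contractive $\ast$-morphism $\Pi:\mathcal{A}\otimes_{\max}\mathcal{B}\rightarrow C^{\ast}(\mathcal{D}_{\mathcal{F}})$ with $\Pi(a\otimes b)=\pi_{1}(a)\pi_{2}(b)$. Fixing $\iota\in\Upsilon$, choose $\lambda\in\Lambda$ and $\delta\in\Delta$ with $\left\Vert\pi_{1}(a)\right\Vert_{\iota}\leq p_{\lambda}(a)$ and $\left\Vert\pi_{2}(b)\right\Vert_{\iota}\leq q_{\delta}(b)$; then $a\mapsto\left.\pi_{1}(a)\right\vert_{\mathcal{K}_{\iota}}\in\mathcal{R}_{\lambda}(\mathcal{A})$ and $b\mapsto\left.\pi_{2}(b)\right\vert_{\mathcal{K}_{\iota}}\in\mathcal{R}_{\delta}(\mathcal{B})$ have commuting ranges, so the definition of the seminorm $\upsilon_{(\lambda,\delta)}$ gives $\left\Vert\sum_{k}\pi_{1}(a_{k})\pi_{2}(b_{k})\right\Vert_{\iota}\leq\upsilon_{(\lambda,\delta)}\!\left(\sum_{k}a_{k}\otimes b_{k}\right)$ for every $\sum_{k}a_{k}\otimes b_{k}\in\mathcal{A}\otimes_{\text{alg}}\mathcal{B}$; using the commutativity of the ranges, $\sum_{k}a_{k}\otimes b_{k}\mapsto\left.\sum_{k}\pi_{1}(a_{k})\pi_{2}(b_{k})\right\vert_{\mathcal{K}_{\iota}}$ is a $\ast$-morphism on $\mathcal{A}\otimes_{\text{alg}}\mathcal{B}$, hence extends to a $\upsilon_{(\lambda,\delta)}$-contractive $\ast$-morphism from $\mathcal{A}\otimes_{\max}\mathcal{B}$ to $B(\mathcal{K}_{\iota})$. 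Letting $\iota$ vary, these $B(\mathcal{K}_{\iota})$-valued extensions are consistent, since they agree with $\pi_{1}(a)\pi_{2}(b)\in C^{\ast}(\mathcal{D}_{\mathcal{F}})$ on the elementary tensors, which span a dense $\ast$-subalgebra; so they glue to the required $\Pi$, with $\left\Vert\Pi(x)\right\Vert_{\iota}\leq\upsilon_{(\lambda,\delta)}(x)$, and $\Pi(1_{\mathcal{A}\otimes\mathcal{B}})=\pi_{1}(1_{\mathcal{A}})\pi_{2}(1_{\mathcal{B}})=\mathrm{id}_{\mathcal{D}_{\mathcal{F}}}$. I expect this assembly over $\Upsilon$ on the completed tensor product, with the quantized domain $\mathcal{F}$ kept under control, to be the point requiring the most care; the remaining steps are a routine compression.

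Next I define $\varphi\otimes\psi:\mathcal{A}\otimes_{\max}\mathcal{B}\rightarrow C^{\ast}(\mathcal{D}_{\mathcal{E}})$ by $(\varphi\otimes\psi)(x)=\left.V^{\ast}\Pi(x)V\right\vert_{\mathcal{D}_{\mathcal{E}}}$. This lands in $C^{\ast}(\mathcal{D}_{\mathcal{E}})$ because $V(\mathcal{H}_{\iota})\subseteq\mathcal{K}_{\iota}$, $\Pi(x)(\mathcal{K}_{\iota})\subseteq\mathcal{K}_{\iota}$, and $V^{\ast}(\mathcal{K}_{\iota})\subseteq\mathcal{H}_{\iota}$, the last inclusion coming from the computation $V^{\ast}\!\left(\sum_{i}a_{i}\otimes b_{i}\otimes\xi_{i}+\mathcal{N}\right)=\sum_{i}\varphi(a_{i})\psi(b_{i})\xi_{i}$, valid for $\xi_{i}\in\mathcal{H}_{\iota}$ (it uses Remark \ref{H}(2) and the commutativity of $\varphi(\mathcal{A})$ with $\psi(\mathcal{B})$), since each $\varphi(a_{i})\psi(b_{i})$ preserves $\mathcal{H}_{\iota}$, together with the continuity of $V^{\ast}$ and the closedness of $\mathcal{H}_{\iota}$. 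Since $\Pi$ is a local contractive $\ast$-morphism, so is each amplification $\Pi^{(n)}$, and $(\varphi\otimes\psi)^{(n)}=V^{(n)\ast}\Pi^{(n)}(\cdot)V^{(n)}$ with $V^{(n)}$ an isometry and $V^{(n)}(\mathcal{E}^{\oplus n})\subseteq\mathcal{F}^{\oplus n}$; the same compression argument at level $n$ then yields $\left\Vert(\varphi\otimes\psi)^{(n)}(z)\right\Vert_{\iota}\leq\left\Vert\Pi^{(n)}(z)\right\Vert_{\iota}\leq\upsilon_{(\lambda,\delta)}^{n}(z)$ and the corresponding positivity statement, so $\varphi\otimes\psi$ is local $\mathcal{CPCC}$; and since $V$ is an isometry, $(\varphi\otimes\psi)(1_{\mathcal{A}\otimes\mathcal{B}})=\left.V^{\ast}\,\mathrm{id}_{\mathcal{D}_{\mathcal{F}}}V\right\vert_{\mathcal{D}_{\mathcal{E}}}=\mathrm{id}_{\mathcal{D}_{\mathcal{E}}}$, so it is unital. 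Finally, for $a\in\mathcal{A}$, $b\in\mathcal{B}$ and $\xi\in\mathcal{D}_{\mathcal{E}}$, the formulas from Lemma \ref{max} give $\Pi(a\otimes b)V\xi=\pi_{1}(a)\pi_{2}(b)(1_{\mathcal{A}}\otimes 1_{\mathcal{B}}\otimes\xi+\mathcal{N})=a\otimes b\otimes\xi+\mathcal{N}$, whence $(\varphi\otimes\psi)(a\otimes b)\xi=V^{\ast}(a\otimes b\otimes\xi+\mathcal{N})=\varphi(a)\psi(b)\xi$; that is, $(\varphi\otimes\psi)(a\otimes b)=\varphi(a)\psi(b)$.

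For uniqueness, if $\Phi\in\mathcal{CPCC}_{\text{loc}}(\mathcal{A}\otimes_{\max}\mathcal{B},C^{\ast}(\mathcal{D}_{\mathcal{E}}))$ also satisfies $\Phi(a\otimes b)=\varphi(a)\psi(b)$, then $\Phi$ and $\varphi\otimes\psi$ agree on the algebraic tensor product $\mathcal{A}\otimes_{\text{alg}}\mathcal{B}$ by linearity. A local $\mathcal{CC}$-map is in particular local contractive, hence continuous, so $\Phi-\varphi\otimes\psi$ is continuous and vanishes on the dense subspace $\mathcal{A}\otimes_{\text{alg}}\mathcal{B}$ of $\mathcal{A}\otimes_{\max}\mathcal{B}$; therefore $\Phi=\varphi\otimes\psi$.
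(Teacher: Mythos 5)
Your proof is correct and follows essentially the paper's own route: apply Lemma \ref{max}, assemble the commuting morphisms $\pi_{1},\pi_{2}$ into a unital local contractive $\ast$-morphism on $\mathcal{A}\otimes_{\max}\mathcal{B}$, compress by the isometry $V$, check the value on elementary tensors through the explicit model $a\otimes b\otimes\xi+\mathcal{N}$, and get uniqueness from continuity and the density of $\mathcal{A}\otimes_{\text{alg}}\mathcal{B}$. The only deviation is that where the paper obtains the morphism $\pi$ with $\pi(a\otimes b)=\pi_{1}(a)\pi_{2}(b)$ by citing \cite[Proposition 3.3]{Ph}, you prove that step by hand from the definition of the seminorms $\upsilon_{(\lambda,\delta)}$ and the restrictions to each $\mathcal{K}_{\iota}$, gluing over $\Upsilon$ — a sound, slightly more laborious substitute for the citation.
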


\begin{proof}
By Lemma \ref{max}, there exist a quantized domain $\{\mathcal{K},\mathcal{F}%
,\mathcal{D}_{\mathcal{F}}\}$,\ two local unital contractive $\ast $ -morphisms $%
\pi _{1}:\mathcal{A}\rightarrow $ $C^{\ast }(\mathcal{D}_{\mathcal{F}})$ and
$\pi _{2}:\mathcal{B}\rightarrow $ $C^{\ast }(\mathcal{D}_{\mathcal{F}})$,
and an isometry $V\in B\left( \mathcal{H},\mathcal{K}\right) $ such that: $%
V\left( \mathcal{E}\right) \subseteq \mathcal{F};\varphi \left( a\right)
\subseteq V^{\ast }\pi _{1}\left( a\right) V\ $for all $a\in \mathcal{A}%
;\psi \left( b\right) \subseteq V^{\ast }\pi _{2}\left( b\right) V\ $for all
$b\in \mathcal{B};\pi _{1}\left( a\right) \pi _{2}\left( b\right) =\pi
_{2}\left( b\right) \pi _{1}\left( a\right) $ for all $a\in \mathcal{A}$ and
for all $b\in \mathcal{B}$.

Since $\pi _{1}\left( a\right) \pi _{2}\left( b\right) =\pi _{2}\left(
b\right) \pi _{1}\left( a\right) $ for all $a\in \mathcal{A}$ and for all $%
b\in \mathcal{B}$, by \cite[Proposition 3.3]{Ph}, there exists a unique
local unital contractive $\ast $-morphism $\pi :\mathcal{A\otimes }_{\max }\mathcal{%
B}\rightarrow C^{\ast }(\mathcal{D}_{\mathcal{F}})$, such that
\begin{equation*}
\pi \left( a\otimes b\right) =\pi _{1}\left( a\right) \pi _{2}\left( b\right)
\end{equation*}%
for all $a\in \mathcal{A}$ and for all $b\in \mathcal{B}$. Consider the
linear map $\rho :\mathcal{A\otimes }_{\max }\mathcal{B}\rightarrow C^{\ast
}(\mathcal{D}_{\mathcal{E}})$, defined by
\begin{equation*}
\rho \left( x\right) =\left. V^{\ast }\pi \left( x\right) V\right\vert _{%
\mathcal{D}_{\mathcal{E}}}.
\end{equation*}%
Since $\pi \in \mathcal{CPCC}_{\text{loc,id}_{\mathcal{D}_{\mathcal{F}}}}(%
\mathcal{A\otimes }_{\max }\mathcal{B},C^{\ast }(\mathcal{D}_{\mathcal{F}}))$
\ and $\ V\ $ is an isometry, we deduce that $\rho \in $ \ $\mathcal{CPCC}_{\text{loc,id%
}_{\mathcal{D}_{\mathcal{E}}}}$\ $(\mathcal{A\otimes }_{\max }\mathcal{B}%
,C^{\ast }(\mathcal{D}_{\mathcal{E}}))$. Let $\xi \in \mathcal{D}_{\mathcal{E%
}},a\in \mathcal{A}$ and $b\in \mathcal{B}.$ From
\begin{eqnarray*}
\left\langle \eta ,\rho \left( a\otimes b\right) \xi \right\rangle
&=&\left\langle \eta ,V^{\ast }\pi _{1}\left( a\right) \pi _{2}\left(
b\right) V\xi \right\rangle =\left\langle V\eta ,\pi _{1}\left( a\right) \pi
_{2}\left( b\right) V\xi \right\rangle \\
&=&\left\langle 1_{\mathcal{A}}\otimes 1_{\mathcal{B}}\otimes \eta +\mathcal{%
N},\pi _{1}\left( a\right) \pi _{2}\left( b\right) \left( 1_{\mathcal{A}%
}\otimes 1_{\mathcal{B}}\otimes \xi +\mathcal{N}\right) \right\rangle \\
&=&\left\langle 1_{\mathcal{A}}\otimes 1_{\mathcal{B}}\otimes \eta +\mathcal{%
N},a\otimes b\otimes \xi +\mathcal{N}\right\rangle =\left\langle \eta
,\varphi \left( a\right) \psi \left( b\right) \xi \right\rangle
\end{eqnarray*}%
for all $\eta \in \mathcal{D}_{\mathcal{E}}$, and taking into account that $%
\mathcal{D}_{\mathcal{E}}$ is dense in $\mathcal{H}$, we deduce that $\rho
\left( a\otimes b\right) \xi =\varphi \left( a\right) \psi \left( b\right)
\xi .$ Consequently, $\rho \left( a\otimes b\right) =\varphi \left( a\right)
\psi \left( b\right) $ for all $a\in \mathcal{A}$ and for all $b\in \mathcal{%
B}.$

If $\widetilde{\rho }\in \mathcal{CPCC}_{\text{loc,id}_{\mathcal{D}_{%
\mathcal{E}}}}(\mathcal{A\otimes }_{\max }\mathcal{B},C^{\ast }(\mathcal{D}_{%
\mathcal{E}}))$ is another map such that%
\begin{equation*}
\widetilde{\rho }\left( a\otimes b\right) =\varphi \left( a\right) \psi
\left( b\right)
\end{equation*}%
for all $a\in \mathcal{A}$ and for all $b\in \mathcal{B}$. Then $\left.
\widetilde{\rho }\right\vert _{\mathcal{A\otimes }_{\text{alg}}\mathcal{B}}=$
$\left. \rho \right\vert _{\mathcal{A\otimes }_{\text{alg}}\mathcal{B}}$ and
since $\mathcal{A\otimes }_{\text{alg}}\mathcal{B}$ is dense in $\mathcal{%
A\otimes }_{\max }\mathcal{B}$, and since $\widetilde{\rho }$ and $\rho $ are
continuos (see \cite[Proposition 3.3]{MJ3}), it follows that $\widetilde{%
\rho }=\rho .$
\end{proof}

\begin{remark}
Let $\varphi \in \mathcal{CPCC}_{\text{loc,id}_{\mathcal{D}_{\mathcal{E}}}}(%
\mathcal{A},C^{\ast }(\mathcal{D}_{\mathcal{E}}))$ and $\psi \in \mathcal{%
CPCC}_{\text{loc,id}_{\mathcal{D}_{\mathcal{E}}}}(\mathcal{B},C^{\ast }(%
\mathcal{D}_{\mathcal{E}}))$ such that $\varphi \left( \mathcal{A}\right) $\
and $\psi \left( \mathcal{B}\right) $ commute. Then the triple $\left( \pi
,V,\{\mathcal{K},\mathcal{F},\mathcal{D}_{\mathcal{F}}\}\right) $
constructed in the proof of Proposition \ref{tmax} is a minimal dilation of $%
\varphi \otimes \psi .$

Indeed, we have:

\begin{enumerate}
\item $V\in B(\mathcal{H},\mathcal{K}),\ V$ is an isometry and $V\left(
\mathcal{E}\right) \subseteq \mathcal{F};$

\item $\left( \varphi \otimes \psi \right) \left( x\right) \subseteq V^{\ast
}\pi \left( x\right) V$, for all $x\in \mathcal{A\otimes }_{\max }\mathcal{B}%
;$

\item For each $\iota \in \Upsilon ,$%
\begin{eqnarray*}
\left[ \pi \left( \mathcal{A\otimes }_{\max }\mathcal{B}\right) V\mathcal{H}%
_{\iota }\right] &=&\overline{\text{span}}\{\pi \left( a\otimes b\right) V%
\mathcal{\xi };a\in \mathcal{A},b\in \mathcal{B},\mathcal{\xi \in H}_{\iota
}\} \\
&=&\overline{\text{span}}\{\pi _{1}\left( a\right) \pi _{2}\left( b\right) V%
\mathcal{\xi };a\in \mathcal{A},b\in \mathcal{B},\mathcal{\xi \in H}_{\iota
}\} \\
&=&\overline{\text{span}}\{a\otimes b\otimes \xi +\mathcal{N};a\in \mathcal{A%
},b\in \mathcal{B},\mathcal{\xi \in H}_{\iota }\}=\mathcal{K}_{\iota }
\end{eqnarray*}%
and by Theorem \ref{s}, the triple $\left( \pi ,V,\{\mathcal{K},\mathcal{F},%
\mathcal{D}_{\mathcal{F}}\}\right) $ constructed in the proof of Proposition %
\ref{tmax} is a minimal dilation of $\varphi \otimes \psi .$
\end{enumerate}
\end{remark}

\section{Marginals of a local $\mathcal{CPCC}$ -map}

Let $\mathcal{A}$ and $\mathcal{B}$ be two unital locally $C^{\ast }$%
-algebras with the topologies defined by the families of $C^{\ast }$%
-seminorms $\left\{ p_{\lambda }\right\} _{\lambda \in \Lambda }$ and $%
\left\{ q_{\delta }\right\} _{\delta \in \Delta },$ respectively, and $\{%
\mathcal{H},\mathcal{E},\mathcal{D}_{\mathcal{E}}\}$ be a quantized domain
in a Hilbert space $\mathcal{H\ }$with $\mathcal{E=\{H}_{\iota }\mathcal{\}}%
_{\iota \in \Upsilon }.$

Let $\varphi \in \mathcal{CPCC}_{\text{loc,}P}(\mathcal{A\otimes }_{\max }%
\mathcal{B},C^{\ast }(\mathcal{D}_{\mathcal{E}}))$.$\ $Clearly, the maps $%
\varphi _{\left( 1\right) }:\mathcal{A\rightarrow }C^{\ast }(\mathcal{D}_{%
\mathcal{E}})$ given by
\begin{equation*}
\varphi _{\left( 1\right) }\left( a\right) =\varphi \left( a\otimes 1_{%
\mathcal{B}}\right)
\end{equation*}%
and $\varphi _{\left( 2\right) }:\mathcal{B\rightarrow }C^{\ast }(\mathcal{D}%
_{\mathcal{E}})$ given by%
\begin{equation*}
\varphi _{\left( 2\right) }\left( b\right) =\varphi \left( 1_{\mathcal{A}%
}\otimes b\right)
\end{equation*}%
are elements in $\mathcal{CPCC}_{\text{loc,}P}(\mathcal{A},C^{\ast }(%
\mathcal{D}_{\mathcal{E}}))\ $and $\mathcal{CPCC}_{\text{loc,}P}(\mathcal{B}%
,C^{\ast }(\mathcal{D}_{\mathcal{E}})$, respectively.

\begin{definition}
Let $\varphi \in \mathcal{CPCC}_{\text{loc,}P}(\mathcal{A\otimes }_{\max }%
\mathcal{B},C^{\ast }(\mathcal{D}_{\mathcal{E}}))$.$\ $The maps $\varphi
_{\left( 1\right) }$ and $\varphi _{\left( 2\right) }$ defined the above are
called the marginals of $\varphi .$
\end{definition}

\begin{definition}
Let $\varphi _{1}\in \mathcal{CPCC}_{\text{loc,}P}(\mathcal{A},C^{\ast }(%
\mathcal{D}_{\mathcal{E}}))\ $and $\varphi _{2}\in \mathcal{CPCC}_{\text{loc,%
}P}(\mathcal{B},C^{\ast }(\mathcal{D}_{\mathcal{E}}))$. If there is $\psi \in
\mathcal{CPCC}_{\text{loc,}P}(\mathcal{A\otimes }_{\max }\mathcal{B},C^{\ast
}(\mathcal{D}_{\mathcal{E}}))$ such that $\psi _{\left( 1\right) }=\varphi
_{1}$ and $\psi _{\left( 2\right) }=\varphi _{2}$, we say that $\varphi _{1}$
and $\varphi _{2}$ are compatible and $\psi $ is a joint map for $\varphi
_{1}$ and $\varphi _{2}$.
\end{definition}

\begin{remark}
Let $\varphi \in \mathcal{CPCC}_{\text{loc,id}_{\mathcal{D}_{\mathcal{E}}}}(%
\mathcal{A},C^{\ast }(\mathcal{D}_{\mathcal{E}}))$ and $\psi \in \mathcal{%
CPCC}_{\text{loc,id}_{\mathcal{D}_{\mathcal{E}}}}(\mathcal{B},C^{\ast }(%
\mathcal{D}_{\mathcal{E}}))$ such that $\varphi \left( a\right) \psi \left(
b\right) =\psi \left( b\right) \varphi \left( a\right) $ for all $a\in
\mathcal{A}$ and for all $b\in \mathcal{B}$. Then $\varphi $ and $\psi $ are
compatible and $\varphi \otimes \psi $ (see Proposition \ref{tmax}) is a
unique joint map for $\varphi $ and $\psi .$
\end{remark}

The structure of an unbounded local $\mathcal{CPCC}$-map in terms of a
minimal Stinespring dilation of one of its marginal maps is given in the
following theorem.

\begin{theorem}
\label{Help} Let $\varphi \in \mathcal{CPCC}_{\text{loc,}P}(\mathcal{%
A\otimes }_{\max }\mathcal{B},C^{\ast }(\mathcal{D}_{\mathcal{E}}))$ and $%
(\pi _{\varphi _{\left( 1\right) }},V_{\varphi _{\left( 1\right) }},$ $\{%
\mathcal{H}^{\varphi _{\left( 1\right) }},\mathcal{E}^{\varphi _{\left(
1\right) }},$ $\mathcal{D}_{\mathcal{E}^{\varphi _{\left( 1\right) }}}\})$
be a minimal Stinespring dilation of $\varphi _{\left( 1\right) }$. Then,
there exists a unique $E\mathcal{\in CPCC}_{\text{loc,id}_{\mathcal{D}_{%
\mathcal{E}^{\varphi _{\left( 1\right) }}}}}(\mathcal{B},C^{\ast }(\mathcal{D%
}_{\mathcal{E}^{\varphi _{\left( 1\right) }}}))$\ such that $E\left(
b\right) \pi _{\varphi _{\left( 1\right) }}\left( a\right) =\pi _{\varphi
_{\left( 1\right) }}\left( a\right) E\left( b\right) $ for all $b\in
\mathcal{B}$ and for all $a\in \mathcal{A}$, and
\begin{equation*}
\varphi \left( a\otimes b\right) \subseteq V_{\varphi _{\left( 1\right)
}}^{\ast }E\left( b\right) \pi _{\varphi _{\left( 1\right) }}\left( a\right)
V_{\varphi _{\left( 1\right) }}
\end{equation*}%
for all $a\in \mathcal{A}$ and $b\in \mathcal{B}$.
\end{theorem}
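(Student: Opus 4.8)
The plan is to build $E$ directly from $\varphi$ and the given minimal Stinespring dilation of $\varphi_{(1)}$ by prescribing the matrix coefficients of each $E(b)$ against the total set $\pi_{\varphi_{(1)}}(\mathcal{A})V_{\varphi_{(1)}}\mathcal{D}_{\mathcal{E}}$. Put $\mathcal{D}_{0}=\operatorname{span}\{\pi_{\varphi_{(1)}}(a)V_{\varphi_{(1)}}\xi:a\in\mathcal{A},\ \xi\in\mathcal{D}_{\mathcal{E}}\}$; by part $(3)$ of Theorem~\ref{s}, $\mathcal{D}_{0}$ is dense in $\mathcal{D}_{\mathcal{E}^{\varphi_{(1)}}}$, and $\operatorname{span}\{\pi_{\varphi_{(1)}}(a)V_{\varphi_{(1)}}\xi:a\in\mathcal{A},\ \xi\in\mathcal{H}_{\iota}\}$ is dense in $\mathcal{H}_{\iota}^{\varphi_{(1)}}$ for each $\iota$. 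For $b\in\mathcal{B}$ I would consider on $\mathcal{D}_{0}$ the sesquilinear form
\[
s_{b}\Bigl(\sum_{i}\pi_{\varphi_{(1)}}(a_{i})V_{\varphi_{(1)}}\xi_{i},\ \sum_{j}\pi_{\varphi_{(1)}}(c_{j})V_{\varphi_{(1)}}\eta_{j}\Bigr)=\sum_{i,j}\bigl\langle\varphi(c_{j}^{\ast}a_{i}\otimes b)\xi_{i},\eta_{j}\bigr\rangle ,
\]
and try to produce $E(b)$ with $\langle E(b)\zeta,\zeta'\rangle=s_{b}(\zeta,\zeta')$; the whole issue is to show that this defines an operator in $C^{\ast}(\mathcal{D}_{\mathcal{E}^{\varphi_{(1)}}})$ with the asserted properties.

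For boundedness (and well-definedness) I would fix $\iota$, pick $(\lambda,\delta)$ and a completely positive $\varphi_{\iota,(\lambda,\delta)}\colon\mathcal{A}_{\lambda}\otimes_{\max}\mathcal{B}_{\delta}\to B(\mathcal{H}_{\iota})$ with $\varphi_{\iota,(\lambda,\delta)}\circ\pi_{(\lambda,\delta)}^{\mathcal{A}\otimes\mathcal{B}}=\varphi(\cdot)|_{\mathcal{H}_{\iota}}$ (local complete positivity of $\varphi$ together with $(\mathcal{A}\otimes_{\max}\mathcal{B})_{(\lambda,\delta)}\cong\mathcal{A}_{\lambda}\otimes_{\max}\mathcal{B}_{\delta}$), and a Stinespring decomposition $\varphi_{\iota,(\lambda,\delta)}=W_{\iota}^{\ast}\rho_{\iota}(\cdot)W_{\iota}$. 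Since $\rho_{\iota}(\pi_{\lambda}(a)\otimes1)$ and $\rho_{\iota}(1\otimes\pi_{\delta}(b))$ commute, a direct computation on $\xi_{i},\eta_{j}\in\mathcal{H}_{\iota}$ gives $s_{b}(\zeta,\zeta')=\langle\rho_{\iota}(1\otimes\pi_{\delta}(b))u,v\rangle$ with $u=\sum_{i}\rho_{\iota}(\pi_{\lambda}(a_{i})\otimes1)W_{\iota}\xi_{i}$ and $v=\sum_{j}\rho_{\iota}(\pi_{\lambda}(c_{j})\otimes1)W_{\iota}\eta_{j}$, while $\|u\|^{2}=\sum_{i,j}\langle\varphi(a_{j}^{\ast}a_{i}\otimes1)\xi_{i},\xi_{j}\rangle=\|\zeta\|^{2}$ and $\|v\|^{2}=\|\zeta'\|^{2}$ (using $\varphi_{(1)}(x)\subseteq V_{\varphi_{(1)}}^{\ast}\pi_{\varphi_{(1)}}(x)V_{\varphi_{(1)}}$ on $\mathcal{D}_{\mathcal{E}}$). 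Hence $|s_{b}(\zeta,\zeta')|\le q_{\delta}(b)\,\|\zeta\|\,\|\zeta'\|$ on $\mathcal{H}_{\iota}^{\varphi_{(1)}}$; in particular $s_{b}$ is independent of the chosen representations of $\zeta,\zeta'$, and the Riesz lemma produces, for each $\iota$, an operator $E(b)|_{\mathcal{H}_{\iota}^{\varphi_{(1)}}}\in B(\mathcal{H}_{\iota}^{\varphi_{(1)}})$ of norm $\le q_{\delta(\iota)}(b)$ with $\langle E(b)\zeta,\zeta'\rangle=s_{b}(\zeta,\zeta')$.

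The main obstacle is to see that these pieces are compatible, i.e.\ that $E(b)\in C^{\ast}(\mathcal{D}_{\mathcal{E}^{\varphi_{(1)}}})$; this is where the quantized-domain structure must be reconciled with the construction. I would prove: for $\iota\le\iota_{1}$, $\zeta=\pi_{\varphi_{(1)}}(a)V_{\varphi_{(1)}}\xi$ with $\xi\in\mathcal{H}_{\iota}$, and $\zeta'\in\mathcal{H}_{\iota_{1}}^{\varphi_{(1)}}$ with $\zeta'\perp\mathcal{H}_{\iota}^{\varphi_{(1)}}$, one has $s_{b}(\zeta,\zeta')=0$. Writing $\zeta'=\sum_{k}\pi_{\varphi_{(1)}}(c_{k})V_{\varphi_{(1)}}\eta_{k}$ ($\eta_{k}\in\mathcal{H}_{\iota_{1}}$), testing the orthogonality against $\pi_{\varphi_{(1)}}(a')V_{\varphi_{(1)}}\xi'$ ($\xi'\in\mathcal{H}_{\iota}$) and using $P_{\iota}\varphi_{(1)}(x)\subseteq\varphi_{(1)}(x)P_{\iota}$ gives $\sum_{k}\varphi_{(1)}(a'^{\ast}c_{k})\mu_{k}=0$ for every $a'\in\mathcal{A}$, where $\mu_{k}:=P_{\iota}\eta_{k}\in\mathcal{H}_{\iota}$; taking $a'=c_{l}$, pairing with $\mu_{l}$ and summing over $l$ yields $\sum_{k,l}\langle\varphi(c_{l}^{\ast}c_{k}\otimes1)\mu_{k},\mu_{l}\rangle=0$, so by the identity of the previous paragraph $\|v\|^{2}=0$ for $v:=\sum_{k}\rho_{\iota}(\pi_{\lambda}(c_{k})\otimes1)W_{\iota}\mu_{k}$, i.e.\ $v=0$. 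Since $\langle\xi,\varphi(a^{\ast}c_{k}\otimes b^{\ast})\eta_{k}\rangle=\langle\xi,\varphi(a^{\ast}c_{k}\otimes b^{\ast})\mu_{k}\rangle$ (again by $\xi\in\mathcal{H}_{\iota}$ and $P_{\iota}\varphi(\cdot)\subseteq\varphi(\cdot)P_{\iota}$) and $\varphi(a^{\ast}c_{k}\otimes b^{\ast})=W_{\iota}^{\ast}\rho_{\iota}(\pi_{\lambda}(a)\otimes1)^{\ast}\rho_{\iota}(1\otimes\pi_{\delta}(b^{\ast}))\rho_{\iota}(\pi_{\lambda}(c_{k})\otimes1)W_{\iota}$, summing over $k$ gives $s_{b}(\zeta,\zeta')=\langle\rho_{\iota}(\pi_{\lambda}(a)\otimes1)W_{\iota}\xi,\ \rho_{\iota}(1\otimes\pi_{\delta}(b^{\ast}))v\rangle=0$. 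By density and boundedness this gives $E(b)(\mathcal{H}_{\iota}^{\varphi_{(1)}})\subseteq\mathcal{H}_{\iota}^{\varphi_{(1)}}$, and these restrictions are then mutually consistent; moreover $s_{b}(\zeta,\zeta')=\overline{s_{b^{\ast}}(\zeta',\zeta)}$ gives $E(b)^{\ast}=E(b^{\ast})$, so $E(b)$ also maps $\mathcal{H}_{\iota}^{\varphi_{(1)}\,\perp}\cap\mathcal{D}_{\mathcal{E}^{\varphi_{(1)}}}$ into itself. Hence the pieces glue to $E(b)\in C^{\ast}(\mathcal{D}_{\mathcal{E}^{\varphi_{(1)}}})$ with $\|E(b)\|_{\iota}\le q_{\delta(\iota)}(b)$.

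It then remains to read off the remaining assertions. Linearity of $b\mapsto E(b)$ and $E(1_{\mathcal{B}})=\operatorname{id}_{\mathcal{D}_{\mathcal{E}^{\varphi_{(1)}}}}$ (since $s_{1_{\mathcal{B}}}(\zeta,\zeta')=\sum_{i,j}\langle\varphi_{(1)}(c_{j}^{\ast}a_{i})\xi_{i},\eta_{j}\rangle=\langle\zeta,\zeta'\rangle$) are immediate from the definition of $s_{b}$. For local complete positivity: if $[b_{kl}]\ge_{\delta}0$ in $M_{n}(\mathcal{B})$, write $[b_{kl}]=[\beta_{mk}]^{\ast}[\beta_{mk}]$, so that $\sum_{k,l}s_{b_{kl}}(\zeta_{l},\zeta_{k})=\langle E^{(n)}([b_{kl}])(\zeta_{l})_{l},(\zeta_{k})_{k}\rangle$ is, after absorbing the $\beta_{mk}$ into the first tensor legs, of the form $\langle\varphi^{(N)}(X)\widetilde{\xi},\widetilde{\xi}\rangle$ with $X\ge_{(\lambda,\delta)}0$, hence $\ge0$; together with $E(1_{\mathcal{B}})=\operatorname{id}$ and Lemma~\ref{11} this gives $E\in\mathcal{CPCC}_{\text{loc,id}_{\mathcal{D}_{\mathcal{E}^{\varphi_{(1)}}}}}(\mathcal{B},C^{\ast}(\mathcal{D}_{\mathcal{E}^{\varphi_{(1)}}}))$. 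The intertwining $E(b)\pi_{\varphi_{(1)}}(a)=\pi_{\varphi_{(1)}}(a)E(b)$ holds because $\langle E(b)\pi_{\varphi_{(1)}}(a)\zeta,\zeta'\rangle$ and $\langle\pi_{\varphi_{(1)}}(a)E(b)\zeta,\zeta'\rangle$ both equal $\sum_{i,j}\langle\varphi(c_{j}^{\ast}aa_{i}\otimes b)\xi_{i},\eta_{j}\rangle$ on $\mathcal{D}_{0}$ (absorb $a$ into the $a_{i}$, resp.\ into the $c_{j}$), and both operators lie in $C^{\ast}(\mathcal{D}_{\mathcal{E}^{\varphi_{(1)}}})$ and are bounded on each $\mathcal{H}_{\iota}^{\varphi_{(1)}}$. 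Finally, for $\xi,\eta\in\mathcal{D}_{\mathcal{E}}$, $\langle V_{\varphi_{(1)}}^{\ast}E(b)\pi_{\varphi_{(1)}}(a)V_{\varphi_{(1)}}\xi,\eta\rangle=s_{b}(\pi_{\varphi_{(1)}}(a)V_{\varphi_{(1)}}\xi,V_{\varphi_{(1)}}\eta)=\langle\varphi(a\otimes b)\xi,\eta\rangle$, so $\varphi(a\otimes b)\subseteq V_{\varphi_{(1)}}^{\ast}E(b)\pi_{\varphi_{(1)}}(a)V_{\varphi_{(1)}}$. Uniqueness is forced: any $E'$ with these two properties satisfies $\langle E'(b)\pi_{\varphi_{(1)}}(a)V_{\varphi_{(1)}}\xi,\pi_{\varphi_{(1)}}(c)V_{\varphi_{(1)}}\eta\rangle=\langle V_{\varphi_{(1)}}^{\ast}E'(b)\pi_{\varphi_{(1)}}(c^{\ast}a)V_{\varphi_{(1)}}\xi,\eta\rangle=\langle\varphi(c^{\ast}a\otimes b)\xi,\eta\rangle=s_{b}(\pi_{\varphi_{(1)}}(a)V_{\varphi_{(1)}}\xi,\pi_{\varphi_{(1)}}(c)V_{\varphi_{(1)}}\eta)$, so $E'=E$ on $\mathcal{D}_{0}$, hence everywhere. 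The delicate step is the third paragraph — verifying that the form-defined $E(b)$ genuinely lies in $C^{\ast}(\mathcal{D}_{\mathcal{E}^{\varphi_{(1)}}})$.
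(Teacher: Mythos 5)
Your construction is correct, but it follows a genuinely different route from the paper. The paper works first with $b\in b(\mathcal{B})$, $0\leq b\leq 1_{\mathcal{B}}$: it shows $\varphi_{(1)}\geq\varphi_b$ where $\varphi_b(a)=\varphi(a\otimes b)$ and invokes the Radon--Nikodym theorem of Bhat--Ghatak--Pamula to produce $E(b)\in\pi_{\varphi_{(1)}}(\mathcal{A})'\cap C^{\ast}(\mathcal{D}_{\mathcal{E}^{\varphi_{(1)}}})$ abstractly (so commutant membership, membership in $C^{\ast}(\mathcal{D}_{\mathcal{E}^{\varphi_{(1)}}})$ and $0\leq E(b)\leq\mathrm{id}$ come for free), then proves local complete positivity of $E$ on $b(\mathcal{B})$ by essentially the same matrix-coefficient computation you use (with $d^r_{ik}=a_{ik}\otimes b_{ri}$), extends to $\mathcal{B}$ by continuity, gets the $\mathcal{CC}$ property from unitality, and derives uniqueness from Radon--Nikodym uniqueness plus density of $b(\mathcal{B})$. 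You instead define $E(b)$ for every $b\in\mathcal{B}$ at once by the sesquilinear form $s_b$ on $\pi_{\varphi_{(1)}}(\mathcal{A})V_{\varphi_{(1)}}\mathcal{D}_{\mathcal{E}}$, prove the bound $\|E(b)\|_{\iota}\leq q_{\delta}(b)$ by factoring $\varphi$ locally through a Stinespring representation of $\mathcal{A}_{\lambda}\otimes_{\max}\mathcal{B}_{\delta}$, and verify by hand that $E(b)$ lies in $\pi_{\varphi_{(1)}}(\mathcal{A})'\cap C^{\ast}(\mathcal{D}_{\mathcal{E}^{\varphi_{(1)}}})$; this avoids the Radon--Nikodym theorem and the bounded-part/continuity-extension detour, and your uniqueness argument is in fact slightly stronger since it does not presuppose continuity. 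Two points you should make explicit: in the invariance step, finite sums $\sum_k\pi_{\varphi_{(1)}}(c_k)V_{\varphi_{(1)}}\eta_k$ that are orthogonal to $\mathcal{H}_{\iota}^{\varphi_{(1)}}$ must be shown dense in $\mathcal{H}_{\iota_1}^{\varphi_{(1)}}\ominus\mathcal{H}_{\iota}^{\varphi_{(1)}}$ --- this follows from your own reduction $\eta_k\mapsto P_{\iota}\eta_k$, which in fact shows directly that $s_b(\zeta,\zeta')=s_b(\zeta,Q_{\iota}\zeta')$ for every finite sum $\zeta'$ over $\mathcal{H}_{\iota_1}$ ($Q_{\iota}$ the projection onto $\mathcal{H}_{\iota}^{\varphi_{(1)}}$), which yields the invariance without any density of orthogonal sums; and in the local positivity step a $\lambda$-positive matrix is $[\beta_{mk}]^{\ast}[\beta_{mk}]+c$ with $q_{\delta}^{n}(c)=0$, so you must dispose of the $c$-part (and of the condition $[c_{ij}]=_{\delta}0\Rightarrow E^{(n)}([c_{ij}])=_{\iota}0$) via your entrywise norm bound before the Schur-type decomposition applies.
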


\begin{proof}
Let $b\in b(\mathcal{B)}$. Consider the map $\varphi _{b}:\mathcal{%
A\rightarrow }C^{\ast }(\mathcal{D}_{\mathcal{E}})\ $defined by%
\begin{equation*}
\varphi _{b}\left( a\right) =\varphi \left( a\otimes b\right) .
\end{equation*}%
Suppose that $0\leq b\leq 1_{\mathcal{B}}$. We will show that $\varphi
_{b}\in \mathcal{CPCC}_{\text{loc}}(\mathcal{A},C^{\ast }(\mathcal{D}_{%
\mathcal{E}}))$ and $\varphi _{\left( 1\right) }\geq \varphi _{b}$.

Since $\varphi \in \mathcal{CPCC}_{\text{loc,}P}(\mathcal{A\otimes }_{\max }%
\mathcal{B},C^{\ast }(\mathcal{D}_{\mathcal{E}}))$, for each $\iota \in
\Upsilon $, there is $\left( \lambda ,\delta \right) \in \Lambda \times
\Delta $ such that $\varphi ^{\left( n\right) }\left( \left[ x_{ij}\right]
_{i,j=1}^{n}\right) \geq _{\iota }0$ whenever $\left[ x_{ij}\right]
_{i,j=1}^{n}\geq _{\left( \lambda ,\delta \right) }0$ and $\varphi ^{\left(
n\right) }\left( \left[ x_{ij}\right] _{i,j=1}^{n}\right) =_{\iota }0$
whenever $\left[ x_{ij}\right] _{i,j=1}^{n}=_{\left( \lambda ,\delta \right)
}0$ for all $n$. Let $n$ be a positive integer and $\left[ a_{ij}\right]
_{i,j=1}^{n}\in M_{n}\left( \mathcal{A}\right) $. If $\left[ a_{ij}\right]
_{i,j=1}^{n}\geq _{\lambda }0$, then $\left[ a_{ij}\otimes b\right]
_{i,j=1}^{n}\geq _{\left( \lambda ,\delta \right) }0$, and%
\begin{equation*}
\varphi _{b}^{\left( n\right) }\left( \left[ a_{ij}\right]
_{i,j=1}^{n}\right) =\left[ \varphi _{b}\left( a_{ij}\right) \right]
_{i,j=1}^{n}=\left[ \varphi \left( a_{ij}\otimes b\right) \right]
_{i,j=1}^{n}=\varphi ^{\left( n\right) }\left( \left[ a_{ij}\otimes b\right]
_{i,j=1}^{n}\right) \geq _{\iota }0.
\end{equation*}%
If $\left[ a_{ij}\right] _{i,j=1}^{n}=_{\lambda }0$, then $\left[
a_{ij}\otimes b\right] _{i,j=1}^{n}=_{\left( \lambda ,\delta \right) }0,$ and%
\begin{equation*}
\varphi _{b}^{\left( n\right) }\left( \left[ a_{ij}\right]
_{i,j=1}^{n}\right) =\varphi ^{\left( n\right) }\left( \left[ a_{ij}\otimes b%
\right] _{i,j=1}^{n}\right) =_{\iota }0.
\end{equation*}%
Thus, we showed that $\varphi _{b}$ is local completely positive.

Let $\iota \in \Upsilon .$ Since $\varphi $ is local completely contractive,
there exists $\left( \lambda ,\delta \right) \in \Lambda \times \Delta $
such that $\left\Vert \varphi ^{\left( n\right) }\left( \left[ x_{ij}%
\right] _{i,j=1}^{n}\right) 
\right\Vert _{\iota }\leq \upsilon _{\left(
\lambda ,\delta \right) }^{ n }\left( \left[ x_{ij}\right]
_{i,j=1}^{n}\right) $ for all $\left[ x_{ij}\right] _{i,j=1}^{n}\in
M_{n}\left( \mathcal{A\otimes }_{\max }\mathcal{B}\right) $ and for all $n.$
Then
\begin{eqnarray*}
\left\Vert \varphi _{b}^{\left( n\right) }\left( \left[ a_{ij}\right]
_{i,j=1}^{n}\right) \right\Vert _{\iota } &=&\left\Vert
\varphi ^{\left( n\right) }\left( \left[ a_{ij}\otimes b\right]
_{i,j=1}^{n}\right) \right\Vert _{\iota} \\
&\leq &\upsilon _{\left( \lambda ,\delta \right) }^{ n }\left(
\left[ a_{ij}\otimes b\right] _{i,j=1}^{n}\right) \leq p_{\lambda
}^{n}\left( \left[ a_{ij}\right] _{i,j=1}^{n}\right)
\end{eqnarray*}%
for all $\left[ a_{ij}\right] _{i,j=1}^{n}\in M_{n}\left( \mathcal{A}\right)
$ and for all $n$ . Therefore, $\varphi _{b}$ is local completely
contractive. Thus, we showed that $\varphi _{b}\in \mathcal{CPCC}_{\text{loc}%
}(\mathcal{A},C^{\ast }(\mathcal{D}_{\mathcal{E}}))$.

To show that $\varphi _{\left( 1\right) }\geq \varphi _{b}$, it is sufficient
to prove that $\varphi _{\left( 1\right) }-\varphi _{b}$ is a local
completely positive map. From
\begin{eqnarray*}
&&\varphi _{\left( 1\right) }^{\left( n\right) }\left( \left[ a_{ij}\right]
_{i,j=1}^{n}\right) -\varphi _{b}^{\left( n\right) }\left( \left[ a_{ij}%
\right] _{i,j=1}^{n}\right)  \\
&=&\varphi ^{\left( n\right) }\left( \left[ a_{ij}\otimes 1_{\mathcal{B}}%
\right] _{i,j=1}^{n}\right) -\varphi ^{\left( n\right) }\left( \left[
a_{ij}\otimes b\right] _{i,j=1}^{n}\right)  \\
&=&\varphi ^{\left( n\right) }\left( \left[ a_{ij}\otimes \left( 1_{\mathcal{%
B}}-b\right) \right] _{i,j=1}^{n}\right)
\end{eqnarray*}%
and taking into account that $\left[ a_{ij}\otimes \left( 1_{\mathcal{B}%
}-b\right) \right] _{i,j=1}^{n}\geq _{\left( \lambda ,\delta \right) }0$ if $%
\left[ a_{ij}\right] _{i,j=1}^{n}\geq _{\lambda }0$, and $\left[
a_{ij}\otimes \left( 1_{\mathcal{B}}-b\right) \right] _{i,j=1}^{n}=_{\left(
\lambda ,\delta \right) }0$ if $\left[ a_{ij}\right] _{i,j=1}^{n}=_{\lambda
}0$, we conclude that $\varphi _{\left( 1\right) }-\varphi _{b}$ is local
completely positive. Consequently, $\varphi _{\left( 1\right) }\geq \varphi
_{b}$ and by \cite[Theorem 4.5]{BGK}, there exists a unique $E\left(
b\right) \in \pi _{\varphi _{\left( 1\right) }}\left( \mathcal{A}\right)
^{\prime }\cap C^{\ast }(\mathcal{D}_{\mathcal{E}^{\varphi _{\left( 1\right)
}}}),$ $0\leq $ $E\left( b\right) \leq $id$_{\mathcal{H}^{\varphi _{\left(
1\right) }}},\ $such that $\varphi _{b}=\left( \varphi _{\left( 1\right)
}\right) _{E(b)}.$ In this way, we obtained a map $E:b(\mathcal{B})\mathcal{%
\rightarrow }C^{\ast }(\mathcal{D}_{\mathcal{E}^{\varphi _{\left( 1\right)
}}})\ \cap B(\mathcal{H}^{\varphi _{\left( 1\right) }})$ such that $E\left( b\right) \pi _{\varphi _{\left( 1\right) }}\left( a\right)
=\left. \pi _{\varphi _{\left( 1\right) }}\left( a\right) E\left( b\right)
\right\vert _{\mathcal{D}_{\mathcal{E}^{\varphi _{\left( 1\right) }}}}$ and 
\begin{equation*}
\varphi \left( a\otimes b\right) =\left. V_{\varphi _{\left( 1\right)
}}^{\ast }E\left( b\right) \pi _{\varphi _{\left( 1\right) }}\left( a\right)
V_{\varphi _{\left( 1\right) }}\right\vert _{\mathcal{D}_{\mathcal{E}%
^{\varphi _{\left( 1\right) }}}}
\end{equation*}%
for all $a\in \mathcal{A}$ and $b\in b(\mathcal{B)}$. Moreover, since $\varphi
_{1_{\mathcal{B}}}=\varphi _{\left( 1\right) }$, it follows that $E\left( 1_{%
\mathcal{B}}\right) =$id$_{\mathcal{D}_{\mathcal{E}^{\varphi _{\left(
1\right) }}}}$ \cite[Corollary 4.6]{BGK}.

To show that $E:b(\mathcal{B})\mathcal{\rightarrow }C^{\ast }(\mathcal{D}_{%
\mathcal{E}^{\varphi _{\left( 1\right) }}})$ is local completely positive,
let $n$ be a positive integer$\ $and $\xi _{i}\in \mathcal{H}_{\iota
}^{\varphi _{\left( 1\right) }},i=1,...,n$. Since $\left( \pi _{\varphi
_{\left( 1\right) }},V_{\varphi _{\left( 1\right) }},\{\mathcal{H}^{\varphi
_{\left( 1\right) }},\mathcal{E}^{\varphi _{\left( 1\right) }},\mathcal{D}_{%
\mathcal{E}^{\varphi _{\left( 1\right) }}}\}\right) $ is a minimal
Stinespring dilation of $\varphi _{\left( 1\right) },$ we have $\left[ \pi
_{\varphi _{\left( 1\right) }}\left( \mathcal{A}\right) V_{\varphi _{\left(
1\right) }}\mathcal{H}_{\iota }\right] =\mathcal{H}_{\iota }^{\varphi
_{\left( 1\right) }},$ and then, we can suppose that for each $i\in
\{1,...,n\}$, $\xi _{i}=\sum\limits_{k=1}^{n_{i}}\pi _{\varphi _{\left(
1\right) }}\left( a_{ik}\right) V_{\varphi _{\left( 1\right) }}\eta _{ik},$
with $\eta _{ik}\in \mathcal{H}_{\iota },$ and $a_{ik}\in \mathcal{A},$ $%
k=1,...,n_{i}$.

Let $\left[ c_{ij}\right] _{i,j=1}^{n}\in M_{n}\left( b(\mathcal{B)}\right) $
such that $\left[ c_{ij}\right] _{i,j=1}^{n}=_{\delta }0.\ $ Then $\left[
d_{ij}\otimes c_{ij}\right] _{i,j=1}^{n}$ $=_{\left( \lambda ,\delta \right)
}0$ for all $\left[ d_{ij}\right] _{i,j=1}^{n}\in M_{n}\left( \mathcal{A}%
\right) $ and $\varphi ^{\left( n\right) }\left( \left[ d_{ij}\otimes c_{ij}%
\right] _{i,j=1}^{n}\right) $ $=_{\iota }0$. From
\begin{eqnarray*}
&&\left\langle \left( \xi _{i}\right) _{i=1}^{n},E^{\left( n\right) }\left(
\left[ c_{ij}\right] _{i,j=1}^{n}\right) \left( \xi _{i}\right)
_{i=1}^{n}\right\rangle \\
&=&\sum\limits_{i,j=1}^{n}\left\langle \xi _{i},E\left( c_{ij}\right) \xi
_{j}\right\rangle \\
&=&\sum\limits_{i,j=1}^{n}\sum\limits_{k=1}^{n_{i}}\sum%
\limits_{l=1}^{n_{j}}\left\langle \pi _{\varphi _{\left( 1\right) }}\left(
a_{ik}\right) V_{\varphi _{\left( 1\right) }}\eta _{ik},E\left(
c_{ij}\right) \pi _{\varphi _{\left( 1\right) }}\left( a_{jl}\right)
V_{\varphi _{\left( 1\right) }}\eta _{jl}\right\rangle \\
&=&\sum\limits_{i,j=1}^{n}\sum\limits_{k=1}^{n_{i}}\sum%
\limits_{l=1}^{n_{j}}\left\langle \eta _{ik},V_{\varphi _{\left( 1\right)
}}^{\ast }\pi _{\varphi _{\left( 1\right) }}\left( a_{ik}^{\ast
}a_{jl}\right) E\left( c_{ij}\right) V_{\varphi _{\left( 1\right) }}\eta
_{jl}\right\rangle \\
&=&\sum\limits_{i,j=1}^{n}\sum\limits_{k=1}^{n_{i}}\sum%
\limits_{l=1}^{n_{j}}\left\langle \eta _{ik},\varphi \left( a_{ik}^{\ast
}a_{jl}\otimes c_{ij}\right) \eta _{jl}\right\rangle \\
&=&\sum\limits_{k=1}^{n_{i}}\sum\limits_{l=1}^{n_{j}}\left\langle \left(
\eta _{ik}\right) _{i=1}^{n},\varphi ^{\left( n\right) }\left( \left[
a_{ik}^{\ast }a_{jl}\otimes c_{ij}\right] _{i,j=1}^{n}\right) \left( \eta
_{jl}\right) _{j=1}^{n}\right\rangle =0
\end{eqnarray*}%
it follows that $E^{\left( n\right) }\left( \left[ c_{ij}\right]
_{i,j=1}^{n}\right) =_{\iota }0.$

Let $\left[ b_{ij}\right] _{i,j=1}^{n},\left[ c_{ij}\right] _{i,j=1}^{n}\in
M_{n}\left( b(\mathcal{B)}\right) $ such that $\left[ c_{ij}\right]
_{i,j=1}^{n}=_{\delta }0$. Then
\begin{eqnarray*}
&&\left\langle \left( \xi _{i}\right) _{i=1}^{n},E^{\left( n\right) }\left(
\left( \left[ b_{ij}\right] _{i,j=1}^{n}\right) ^{\ast }\left[ b_{ij}\right]
_{i,j=1}^{n}+\left[ c_{ij}\right] _{i,j=1}^{n}\right) \left( \xi _{i}\right)
_{i=1}^{n}\right\rangle \\
&=&\sum\limits_{i,j=1}^{n}\sum\limits_{r=1}^{n}\left\langle \xi
_{i},E\left( b_{ri}^{\ast }b_{rj}\right) \xi _{j}\right\rangle \\
&=&\sum\limits_{i,j=1}^{n}\sum\limits_{r=1}^{n}\left\langle
\sum\limits_{k=1}^{n_{i}}\pi _{\varphi _{\left( 1\right) }}\left(
a_{ik}\right) V_{\varphi _{\left( 1\right) }}\eta _{ik},E\left( b_{ri}^{\ast
}b_{rj}\right) \sum\limits_{l=1}^{n_{j}}\pi _{\varphi _{\left( 1\right)
}}\left( a_{jl}\right) V_{\varphi _{\left( 1\right) }}\eta _{jl}\right\rangle
\\
&=&\sum\limits_{i,j=1}^{n}\sum\limits_{r=1}^{n}\sum\limits_{k=1}^{n_{i}}%
\sum\limits_{l=1}^{n_{j}}\left\langle \eta _{ik},V_{\varphi _{\left(
1\right) }}^{\ast }E\left( b_{ri}^{\ast }b_{rj}\right)\pi _{\varphi _{\left( 1\right) }}\left( a_{ik}^{\ast
}a_{jl}\right) V_{\varphi _{\left(
1\right) }}\eta _{jl}\right\rangle \\
&=&\sum\limits_{i,j=1}^{n}\sum\limits_{r=1}^{n}\sum\limits_{k=1}^{n_{i}}%
\sum\limits_{l=1}^{n_{j}}\left\langle \eta _{ik},\varphi \left(
a_{ik}^{\ast }a_{jl}\otimes b_{ri}^{\ast }b_{rj}\right) \eta
_{jl}\right\rangle \\
&=&\sum\limits_{r=1}^{n}\sum\limits_{i,j=1}^{n}\sum\limits_{k=1}^{n_{i}}%
\sum\limits_{l=1}^{n_{j}}\left\langle \eta _{ik},\varphi \left( \left(
a_{ik}\otimes b_{ri}\right) ^{\ast }\left( a_{jl}\otimes b_{rj}\right)
\right) \eta _{jl}\right\rangle \\
&=&\sum\limits_{r=1}^{n}\left(
\sum\limits_{i,j=1}^{n}\sum\limits_{k=1}^{n_{i}}\sum\limits_{l=1}^{n_{j}}%
\left\langle \eta _{ik},\varphi \left( \left( d_{ik}^{r}\right) ^{\ast
}d_{jl}^{r}\right) \eta _{jl}\right\rangle \right)
\end{eqnarray*}%
where $d_{ik}^{r}=a_{ik}\otimes b_{ri}.$ Since $\varphi $ is local
completely positive, by \cite[Proposition 3.3]{MJ3}, $\varphi $ is
completely positive and then, for each $r\in \{1,...,n\}$, 
\begin{eqnarray*}
\sum\limits_{i,j=1}^{n}\sum\limits_{k=1}^{n_{i}}\sum\limits_{l=1}^{n_{j}}
\left\langle \eta _{ik},\varphi \left( \left( d_{ik}^{r}\right)^{\ast}d_{jl}^{r}\right) \eta _{jl}\right\rangle  \geq 0.
\end{eqnarray*} Therefore, 

\begin{eqnarray*}
\left\langle
\left( \xi _{i}\right) _{i=1}^{n},E^{\left( n\right) }\left( \left( \left[
b_{ij}\right] _{i,j=1}^{n}\right) ^{\ast }\left[ b_{ij}\right] _{i,j=1}^{n}+%
\left[ c_{ij}\right] _{i,j=1}^{n}\right) \left( \xi _{i}\right)
_{i=1}^{n}\right\rangle \geq 0
\end{eqnarray*}
and so, $E$ is a local $\mathcal{CP}$-map.
On the other hand, by \cite[Proposition 3.3]{MJ3}, $E$ is continuous with
respect to the families of $C^{\ast }$-seminorms $\{\left. p_{\delta
}\right\vert _{b(\mathcal{B})}\}_{\delta \in \Delta }\ $and $\{\left\Vert
\cdot \right\Vert _{\iota }\}_{\iota \in \Upsilon }$, and so, it  extends
to a unital local $\mathcal{CP}$-map from $\mathcal{B}$ to $C^{\ast }(%
\mathcal{D}_{\mathcal{E}^{\varphi _{\left( 1\right) }}})$, denoted by $E$
too. Since $E\left( 1_{\mathcal{B}}\right) =$id$_{\mathcal{D}_{\mathcal{E}%
^{\varphi _{\left( 1\right) }}}},$ by \cite[Corollary 4.1]{D1}, $E$ is also a
local $\mathcal{CC}$-map. Moreover, $E\left( b\right) \pi _{\varphi _{\left(
1\right) }}\left( a\right) =\pi _{\varphi _{\left( 1\right) }}\left(
a\right) E\left( b\right) $ for all $b\in \mathcal{B}$ and $a\in \mathcal{A}$%
, and
\begin{equation*}
\varphi \left( a\otimes b\right) =\left. V_{\varphi _{\left( 1\right)
}}^{\ast }E\left( b\right) \pi _{\varphi _{\left( 1\right) }}\left( a\right)
V_{\varphi _{\left( 1\right) }}\right\vert _{\mathcal{D}_{\mathcal{E}%
^{\varphi _{\left( 1\right) }}}}
\end{equation*}%
for all $a\in \mathcal{A}$ and $b\in \mathcal{B}.$

To prove the uniqueness of $E$, suppose that $\widetilde{E}:\mathcal{B}%
\rightarrow C^{\ast }(\mathcal{D}_{\mathcal{E}^{\varphi _{\left( 1\right)
}}})\ $is another local $\mathcal{CPCC}$-map such that $\widetilde{E}\left( b\right) \pi _{\varphi _{\left( 1\right) }}\left(
a\right) =\pi _{\varphi _{\left( 1\right) }}\left( a\right) \widetilde{E}%
\left( b\right) $  and 

\begin{equation*}
\varphi \left( a\otimes b\right) =\left. V_{\varphi _{\left( 1\right)
}}^{\ast }\widetilde{E}\left( b\right) \pi _{\varphi _{\left( 1\right)
}}\left( a\right) V_{\varphi _{\left( 1\right) }}\right\vert _{\mathcal{D}_{%
\mathcal{E}^{\varphi _{\left( 1\right) }}}}
\end{equation*}%
for all $a\in \mathcal{A}$ and $b\in \mathcal{B}$.$\ $For
each $b\in b(\mathcal{B)},$ since $\widetilde{E}\left( b\right) \in
b(C^{\ast }(\mathcal{D}_{\mathcal{E}^{\varphi _{\left( 1\right) }}})),$ $%
\widetilde{E}\left( b\right) $ extends to an element in $B(\mathcal{H}%
^{\varphi _{\left( 1\right) }})$ denoted by $\widetilde{E}\left( b\right) $
too. Therefore, $\widetilde{E}\left( b\right) \in \pi _{\varphi _{\left(
1\right) }}\left( \mathcal{A}\right) ^{\prime }\cap C^{\ast }(\mathcal{D}_{%
\mathcal{E}^{\varphi _{\left( 1\right) }}})$ and, for each $b\in b\left(
\mathcal{B}\right) \ $we have
\begin{eqnarray*}
\left( \varphi _{\left( 1\right) }\right) _{\widetilde{E}\left( b\right)
}\left( a\right) \xi &=&V_{\varphi _{\left( 1\right) }}^{\ast }\widetilde{E}%
\left( b\right) \pi _{\varphi _{\left( 1\right) }}\left( a\right) V_{\varphi
_{\left( 1\right) }}\xi =\varphi \left( a\otimes b\right) \xi \\
&=&V_{\varphi _{\left( 1\right) }}^{\ast }E\left( b\right) \pi _{\varphi
_{\left( 1\right) }}\left( a\right) V_{\varphi _{\left( 1\right) }}\xi
=\left( \varphi _{\left( 1\right) }\right) _{E\left( b\right) }\left(
a\right) \xi
\end{eqnarray*}%
for all $\xi \in \mathcal{D}_{\mathcal{E}}$ and for all $a\in \mathcal{A}$,
whence, by \cite[Theorem 4.5]{BGK}, $\widetilde{E}\left( b\right) =E\left(
b\right) $. Consequently, $\left. \widetilde{E}\right\vert _{b(\mathcal{B)}%
}=\left. E\right\vert _{b(\mathcal{B)}}$, whence, since $b(\mathcal{B)}$ is
dense in $\mathcal{B}$ and the maps $\widetilde{E}$ and $E$ are continuous,
we deduce that $\widetilde{E}=E.$
\end{proof}

The above theorem extends a result of Haapasalo, Heinosaari and Pellonp\"{a}%
\"{a} \cite[Lemma 1]{HHP} in the context of unbounded local $\mathcal{CPCC}$%
-maps. As in \cite{HHP}, the quadruple $\left( \pi _{\varphi _{\left(
1\right) }},E,V_{\varphi _{\left( 1\right) }},\{\mathcal{H}^{\varphi
_{\left( 1\right) }},\mathcal{E}^{\varphi _{\left( 1\right) }},\mathcal{D}_{%
\mathcal{E}^{\varphi _{\left( 1\right) }}}\}\right) $ is called an $\mathcal{%
A}$-subminimal dilation of $\varphi $. In the same way, we define a $%
\mathcal{B}$-subminimal dilation of $\varphi $.

\begin{theorem}
\label{C} Let $\varphi _{1}\in \mathcal{CPCC}_{\text{loc,}P}(\mathcal{A}%
,C^{\ast }(\mathcal{D}_{\mathcal{E}}))\ $and $\varphi _{2}\in \mathcal{CPCC}%
_{\text{loc,}P}(\mathcal{B},C^{\ast }(\mathcal{D}_{\mathcal{E}})$. Suppose
that $\varphi _{1}$ and $\varphi _{2}$ are compatible.

\begin{enumerate}
\item If $\varphi _{1}$ is extremal in $\mathcal{CPCC}_{\text{loc,}P}(%
\mathcal{A},C^{\ast }(\mathcal{D}_{\mathcal{E}}))$ or $\varphi _{2}$ is
extremal in

$\mathcal{CPCC}_{\text{loc,}P}(\mathcal{B},C^{\ast }(\mathcal{D}_{\mathcal{E}%
}))$, then they have a unique joint map.

\item If $\varphi _{1}$ is extremal in $\mathcal{CPCC}_{\text{loc,}P}(%
\mathcal{A},C^{\ast }(\mathcal{D}_{\mathcal{E}}))$ and $\varphi _{2}$ is
extremal in

$\mathcal{CPCC}_{\text{loc,}P}(\mathcal{B},C^{\ast }(\mathcal{D}_{\mathcal{E}%
}))$, then their unique joint map is extremal

in $\mathcal{CPCC}_{\text{loc,}P}(\mathcal{A\otimes }_{\max }\mathcal{B}%
,C^{\ast }(\mathcal{D}_{\mathcal{E}})).$

\item If $\varphi _{1}$ or $\varphi _{2}$ is a local contractive $\ast $%
-morphism, then

$\varphi _{1}\left( a\right) \varphi _{2}\left( b\right) =\varphi _{2}\left(
b\right) \varphi _{1}\left( a\right) $ for all $a\in \mathcal{A}$ and $b\in
\mathcal{B}$ and the

unique joint map $\psi \in \mathcal{CPCC}_{\text{loc,}P}(\mathcal{A\otimes }%
_{\max }\mathcal{B},C^{\ast }(\mathcal{D}_{\mathcal{E}}))$ is of the form

$\psi \left( a\otimes b\right) =\varphi _{1}\left( a\right) \varphi
_{2}\left( b\right) $ for all $a\in \mathcal{A}$ and $b\in \mathcal{B}$.
\end{enumerate}
\end{theorem}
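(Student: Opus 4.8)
The plan is to derive all three parts from the $\mathcal{A}$-subminimal dilation of Theorem \ref{Help} together with the extremality criterion of Proposition \ref{1}: part (2) will follow from part (1) by a short argument with marginals, and part (3) from part (1) by computing the $\mathcal{A}$-subminimal dilation of a $\ast$-morphism explicitly. Throughout, by interchanging $\mathcal{A}$ and $\mathcal{B}$ (and using the $\mathcal{B}$-subminimal dilation) I may assume that the distinguished marginal in each case is $\varphi_1$. The substantive step is part (1), whose only delicate point is a bounded/unbounded bookkeeping issue described below.

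For part (1), fix a minimal Stinespring dilation $(\pi_{\varphi_1},V_{\varphi_1},\{\mathcal{H}^{\varphi_1},\mathcal{E}^{\varphi_1},\mathcal{D}_{\mathcal{E}^{\varphi_1}}\})$ of $\varphi_1$ and let $\psi,\psi'$ be two joint maps, so $\psi_{(1)}=\psi'_{(1)}=\varphi_1$ and $\psi_{(2)}=\psi'_{(2)}=\varphi_2$. Applying Theorem \ref{Help} to $\psi$ and to $\psi'$ with this \emph{same} dilation produces maps $E,E'\in\mathcal{CPCC}_{\text{loc,id}_{\mathcal{D}_{\mathcal{E}^{\varphi_1}}}}(\mathcal{B},C^{\ast}(\mathcal{D}_{\mathcal{E}^{\varphi_1}}))$, each with range commuting with $\pi_{\varphi_1}(\mathcal{A})$, such that $\psi(a\otimes b)=V_{\varphi_1}^{\ast}E(b)\pi_{\varphi_1}(a)V_{\varphi_1}$ on $\mathcal{D}_{\mathcal{E}}$, and likewise for $\psi'$ with $E'$. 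Setting $a=1_{\mathcal{A}}$ gives $\varphi_2(b)=V_{\varphi_1}^{\ast}E(b)V_{\varphi_1}=V_{\varphi_1}^{\ast}E'(b)V_{\varphi_1}$, hence $V_{\varphi_1}^{\ast}(E(b)-E'(b))V_{\varphi_1}=0$ for every $b\in\mathcal{B}$. For $b\in b(\mathcal{B})$ the elements $E(b),E'(b)$ lie in $b(C^{\ast}(\mathcal{D}_{\mathcal{E}^{\varphi_1}}))$, hence extend to bounded operators on $\mathcal{H}^{\varphi_1}$, so $E(b)-E'(b)\in\pi_{\varphi_1}(\mathcal{A})'\cap C^{\ast}(\mathcal{D}_{\mathcal{E}^{\varphi_1}})$; since $\varphi_1$ is extremal, Proposition \ref{1} forces $E(b)=E'(b)$. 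By continuity of $E,E'$ and density of $b(\mathcal{B})$ in $\mathcal{B}$ we obtain $E=E'$, so $\psi$ and $\psi'$ agree on $\mathcal{A}\otimes_{\text{alg}}\mathcal{B}$ and then, by continuity and density, on all of $\mathcal{A}\otimes_{\max}\mathcal{B}$. The one point requiring care is precisely the restriction to $b(\mathcal{B})$: it is what makes $E(b)-E'(b)$ a bounded operator belonging to the commutant $\pi_{\varphi_1}(\mathcal{A})'$ to which Proposition \ref{1} applies.

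For part (2), let $\psi$ be the joint map, unique by part (1), and suppose $\psi=\frac{1}{2}\psi_1+\frac{1}{2}\psi_2$ with $\psi_1,\psi_2\in\mathcal{CPCC}_{\text{loc,}P}(\mathcal{A}\otimes_{\max}\mathcal{B},C^{\ast}(\mathcal{D}_{\mathcal{E}}))$. Taking $(1)$-marginals, $\varphi_1=\psi_{(1)}=\frac{1}{2}\psi_{1,(1)}+\frac{1}{2}\psi_{2,(1)}$ with $\psi_{i,(1)}\in\mathcal{CPCC}_{\text{loc,}P}(\mathcal{A},C^{\ast}(\mathcal{D}_{\mathcal{E}}))$, so extremality of $\varphi_1$ gives $\psi_{1,(1)}=\psi_{2,(1)}=\varphi_1$; likewise extremality of $\varphi_2$ gives $\psi_{1,(2)}=\psi_{2,(2)}=\varphi_2$. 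Hence $\psi_1$ and $\psi_2$ are both joint maps of $\varphi_1$ and $\varphi_2$, so by part (1) $\psi_1=\psi_2=\psi$, which is exactly the extremality of $\psi$.

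For part (3), assume $\varphi_1$ is a local contractive $\ast$-morphism. Then $P=\varphi_1(1_{\mathcal{A}})$ is a projection in $C^{\ast}(\mathcal{D}_{\mathcal{E}})$ commuting with each $P_\iota$; writing $\mathcal{E}^{\varphi_1}=\{P\mathcal{H}_\iota\}_\iota$ one has $\mathcal{D}_{\mathcal{E}^{\varphi_1}}=P\mathcal{D}_{\mathcal{E}}$, and $(\pi_{\varphi_1},V_{\varphi_1},\{P\mathcal{H},\mathcal{E}^{\varphi_1},P\mathcal{D}_{\mathcal{E}}\})$ with $\pi_{\varphi_1}(a)=\varphi_1(a)|_{P\mathcal{D}_{\mathcal{E}}}$ and $V_{\varphi_1}\xi=P\xi$ is a minimal Stinespring dilation of $\varphi_1$. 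Since for $T\in\pi_{\varphi_1}(\mathcal{A})'\cap C^{\ast}(P\mathcal{D}_{\mathcal{E}})$ the condition $V_{\varphi_1}^{\ast}TV_{\varphi_1}=0$ means $T$ vanishes on the dense subspace $P\mathcal{D}_{\mathcal{E}}$ of $P\mathcal{H}$, whence $T=0$, Proposition \ref{1} shows $\varphi_1$ is extremal, so by part (1) the joint map $\psi$ is unique. Feeding this dilation into Theorem \ref{Help} yields the unique $E\colon\mathcal{B}\to C^{\ast}(P\mathcal{D}_{\mathcal{E}})$ with $E(1_{\mathcal{B}})=\mathrm{id}$, $E(b)$ commuting with $\pi_{\varphi_1}(\mathcal{A})$, and $\psi(a\otimes b)=V_{\varphi_1}^{\ast}\pi_{\varphi_1}(a)E(b)V_{\varphi_1}$ on $\mathcal{D}_{\mathcal{E}}$. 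Evaluating at $a=1_{\mathcal{A}}$ gives $\varphi_2(b)\xi=E(b)P\xi$ for $\xi\in\mathcal{D}_{\mathcal{E}}$; moreover $\varphi_1(a)\xi=\varphi_1(a)P\xi\in P\mathcal{D}_{\mathcal{E}}$ for all $a$ and $\xi$, so using $\pi_{\varphi_1}(a)E(b)=E(b)\pi_{\varphi_1}(a)$ on $P\mathcal{D}_{\mathcal{E}}$ one computes $\varphi_1(a)\varphi_2(b)\xi=\varphi_1(a)E(b)P\xi=E(b)\varphi_1(a)\xi=\varphi_2(b)\varphi_1(a)\xi$; and since $\psi(a\otimes b)\xi=V_{\varphi_1}^{\ast}\pi_{\varphi_1}(a)E(b)V_{\varphi_1}\xi=\varphi_1(a)E(b)P\xi$, the same chain gives $\psi(a\otimes b)\xi=\varphi_1(a)\varphi_2(b)\xi$. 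Thus $\varphi_1(\mathcal{A})$ and $\varphi_2(\mathcal{B})$ commute and $\psi(a\otimes b)=\varphi_1(a)\varphi_2(b)$, as claimed. Here the only real work is the explicit identification of the minimal Stinespring dilation of a $\ast$-morphism and the bookkeeping with the corner $P\mathcal{H}$.
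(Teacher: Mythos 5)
Your parts (1) and (3) are essentially the paper's own arguments: in (1) you apply Theorem \ref{Help} to both joint maps over one fixed minimal Stinespring dilation of $\varphi _{1}$, evaluate at $a=1_{\mathcal{A}}$, restrict to $b\in b(\mathcal{B})$ so that $E\left( b\right) -E^{\prime }\left( b\right) $ is a bounded element of $\pi _{\varphi _{1}}\left( \mathcal{A}\right) ^{\prime }\cap C^{\ast }(\mathcal{D}_{\mathcal{E}^{\varphi _{1}}})$, and finish with Proposition \ref{1} plus density of $b(\mathcal{B})$; in (3) you exhibit the minimal Stinespring dilation of the $\ast $-morphism $\varphi _{1}$ on the corner $P\mathcal{H}$ (the paper phrases this as reducing to $P\mathcal{H}_{\iota }=\mathcal{H}_{\iota }$) and read off commutation and $\psi \left( a\otimes b\right) =\varphi _{1}\left( a\right) \varphi _{2}\left( b\right) $ from Theorem \ref{Help}, just as the paper does. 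Part (2) is where you genuinely diverge, and your route is correct and shorter. The paper returns to the Radon--Nikodym theorem: from $\varphi _{1}\geq \frac{1}{2}\left( \psi _{i}\right) _{\left( 1\right) }$ it produces operators $S_{i}$, builds new quantized domains $\left[ S_{i}^{1/2}\mathcal{H}^{\varphi _{1}}\right] $ carrying minimal dilations of $\left( \psi _{i}\right) _{\left( 1\right) }$, applies Theorem \ref{Help} to each $\psi _{i}$, and only then invokes extremality of $\varphi _{2}$ (to equate second marginals) and of $\varphi _{1}$ (via Proposition \ref{1}) to force $\psi _{1}=\psi _{2}$. You instead note that the marginal operation is affine, so $\varphi _{j}=\frac{1}{2}\left( \psi _{1}\right) _{\left( j\right) }+\frac{1}{2}\left( \psi _{2}\right) _{\left( j\right) }$ with both halves in $\mathcal{CPCC}_{\text{loc,}P}$ (as recorded at the start of Section 5), so extremality of $\varphi _{1}$ and $\varphi _{2}$ already forces $\left( \psi _{i}\right) _{\left( 1\right) }=\varphi _{1}$ and $\left( \psi _{i}\right) _{\left( 2\right) }=\varphi _{2}$; hence $\psi _{1},\psi _{2}$ are both joint maps and part (1) gives $\psi _{1}=\psi _{2}$ at once. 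This makes the implication from (1) to (2) transparent and avoids the auxiliary dilation construction, which in the paper essentially re-proves part (1) in a slightly more general setting without yielding anything extra for the statement at hand.
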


\begin{proof}
$\left( 1\right) $\ Since $\varphi _{1}$ and $\varphi _{2}$ are compatible,
there is $\varphi \in \mathcal{CPCC}_{\text{loc,}P}(\mathcal{A\otimes }%
_{\max }\mathcal{B},C^{\ast }(\mathcal{D}_{\mathcal{E}}))$ such that $%
\varphi _{\left( 1\right) }=\varphi _{1}$ and $\varphi _{\left( 2\right)
}=\varphi _{2}$. Suppose that $\varphi $ is not unique and $\varphi _{1}$ is
extremal in $\mathcal{CPCC}_{\text{loc,}P}(\mathcal{A},C^{\ast }(\mathcal{D}%
_{\mathcal{E}}))$. Let $\psi \in \mathcal{CPCC}_{\text{loc,}P}(\mathcal{%
A\otimes }_{\max }\mathcal{B},C^{\ast }(\mathcal{D}_{\mathcal{E}}))$ be such
that $\psi _{\left( 1\right) }=\varphi _{1},$ $\psi _{\left( 2\right)
}=\varphi _{2}$ and $\varphi \neq \psi $, and $\left( \pi _{\varphi
_{1}},V_{\varphi _{1}},\{\mathcal{H}^{\varphi _{1}},\mathcal{E}^{\varphi
_{1}},\mathcal{D}_{\mathcal{E}^{\varphi _{1}}}\}\right) $ be a minimal
Stinespring dilation of $\varphi _{1}$. By Theorem \ref{Help}, there exist $%
E,\widetilde{E}\in $ $\mathcal{CPCC}_{\text{loc,}\text{id}_{\mathcal{D}_{\mathcal{E}^{{\varphi}_1}}}}(\mathcal{B},C^{\ast }(%
\mathcal{D}_{\mathcal{E}^{\varphi _{1}}}))$ such that $E\left( b\right) \pi
_{\varphi _{1}}\left( a\right) =\pi _{\varphi _{1}}\left( a\right) E\left(
b\right) $ and $\widetilde{E}\left( b\right) \pi _{\varphi _{1}}\left(
a\right) =\pi _{\varphi _{1}}\left( a\right) \widetilde{E}\left( b\right) $
for all $a\in \mathcal{A}$ and for all $b\in \mathcal{B}$ and
\begin{equation*}
\varphi \left( a\otimes b\right) =\left. V_{\varphi _{1}}^{\ast }E\left(
b\right) \pi _{\varphi _{1}}\left( a\right) V_{\varphi _{1}}\right\vert _{%
\mathcal{D}_{\mathcal{E}}}\text{ and }\psi \left( a\otimes b\right) =\left.
V_{\varphi _{1}}^{\ast }\widetilde{E}\left( b\right) \pi _{\varphi
_{1}}\left( a\right) V_{\varphi _{1}}\right\vert _{\mathcal{D}_{\mathcal{E}%
}}.
\end{equation*}%
Then
\begin{equation*}
V_{\varphi _{1}}^{\ast }\left( E\left( b\right) -\widetilde{E}\left(
b\right) \right) V_{\varphi _{1}}\xi =\varphi \left( 1_{\mathcal{A}}\otimes
b\right) \xi -\psi \left( 1_{\mathcal{A}}\otimes b\right) \xi =\varphi
_{\left( 2\right) }\left( b\right) \xi -\psi _{\left( 2\right) }\left(
b\right) \xi =0
\end{equation*}%
for all $\xi \in \mathcal{D}_{\mathcal{E}}$ and for all $b\in b(\mathcal{B)}$%
.$\ $From this relation, and taking into account that $E\left( b\right) -%
\widetilde{E}\left( b\right) \in $ $\pi _{\varphi _{1}}\left( \mathcal{A}%
\right) ^{\prime }\cap C^{\ast }(\mathcal{D}_{\mathcal{E}^{\varphi _{1}}})$
for all $b\in b(\mathcal{B)}$, and $\varphi _{1}$ is extremal in $\mathcal{%
CPCC}_{\text{loc,}P}(\mathcal{A},C^{\ast }(\mathcal{D}_{\mathcal{E}}))$,
we deduce that $E\left( b\right) =\widetilde{E}\left( b\right) $ for
all $b\in b(\mathcal{B)}$ (Proposition \ref{1}). Since $E$ and $\widetilde{E}\ $are continuous and
$b(\mathcal{B)}$ is dense in $\mathcal{B}$, from $\left. E\right\vert _{b(%
\mathcal{B})}=\left. \widetilde{E}\right\vert _{b(\mathcal{B})}$, we deduce
that $E=$ $\widetilde{E}$. Therefore, $\varphi =\psi $, a contradiction.

$\left( 2\right) $ Since $\varphi _{1}$ and $\varphi _{2}$ are extremal, the
joint map of $\varphi _{1}\ $and $\varphi _{2}$ is unique. Let $\psi \in
\mathcal{CPCC}_{\text{loc,}P}(\mathcal{A\otimes }_{\max }\mathcal{B},C^{\ast
}(\mathcal{D}_{\mathcal{E}}))$ be the joint map of $\varphi _{1}\ $and $%
\varphi _{2}.$ Suppose that $\psi $ is not extremal. Then, there exist $\psi
_{1},\psi _{2}\in \mathcal{CPCC}_{\text{loc,}P}(\mathcal{A\otimes }_{\max }%
\mathcal{B},C^{\ast }(\mathcal{D}_{\mathcal{E}})),\psi _{1}\neq \psi _{2}$
such that $\psi =\frac{1}{2}\psi _{1}+\frac{1}{2}\psi _{2}$, and
consequently, $\varphi _{i}=\psi _{\left( i\right) }=\frac{1}{2}\left( \psi
_{1}\right) _{\left( i\right) }+\frac{1}{2}\left( \psi _{2}\right) _{\left(
i\right) },i=1,2.$

Let $\left( \pi _{\varphi _{1}},V_{\varphi _{1}},\{\mathcal{H}^{\varphi
_{1}},\mathcal{E}^{\varphi _{1}},\mathcal{D}_{\mathcal{E}^{\varphi
_{1}}}\}\right) $ be a minimal Stinespring dilation of $\varphi _{1}$. From $%
\ \varphi _{1}\geq \frac{1}{2}\left( \psi _{i}\right) _{\left( 1\right)
},i=1,2$,$\ $and Radon Nikodym theorem \cite[Theorem 4.5]{BGK}, we deduce
that there exist $T_{i}\in \pi _{\varphi _{1}}\left( \mathcal{A}\right)
^{\prime }\cap C^{\ast }(\mathcal{D}_{\mathcal{E}^{\varphi _{1}}}),0\leq
T\leq $id$_{\mathcal{H}^{\varphi_1}},i=1,2$, such that
\begin{equation*}
\left( \psi _{i}\right) _{\left( 1\right) }\left( a\right) =\left.
2V_{\varphi _{1}}^{\ast }T_{i}\pi _{\varphi _{1}}\left( a\right) V_{\varphi
_{1}}\right\vert _{\mathcal{D}_{\mathcal{E}}},i=1,2.
\end{equation*}%
Let $S_{i}=2T_{i},$ $i=1,2.$ Clearly, $S_{i}\in \pi _{\varphi _{1}}\left(
\mathcal{A}\right) ^{\prime }\cap C^{\ast }(\mathcal{D}_{\mathcal{E}%
^{\varphi _{1}}})$ $,i=1,2.$

For each $\iota \in \Upsilon $, put $\left[ \left( S_{i}\right) ^{\frac{1}{2}%
}\mathcal{H}_{\iota }^{\varphi _{1}}\right] =\mathcal{H}_{\iota }^{i},i=1,2$%
. Clearly, $\mathcal{H}_{\iota _{1}}^{i}\subseteq \mathcal{H}_{\iota
_{2}}^{i}\ $for all $\iota _{1},\iota _{2}\in \Upsilon \ $with $\iota
_{1}\leq \iota _{2}$, $i=1,2$. Let $\mathcal{H}^{i}=\left[ \left(
S_{i}\right) ^{\frac{1}{2}}\mathcal{H}^{\varphi _{1}}\right] ,$ $i=1,2.$
Then, $\{\mathcal{H}^{i};\mathcal{E}^{i};\mathcal{D}_{\mathcal{E}^{i}}\}$,
where $\mathcal{E}^{i}\mathcal{=\{H}_{\iota }^{i}\mathcal{\}}_{\iota \in
\Upsilon }$, is a quantized domain in the Hilbert space $\mathcal{H}^{i},%
\mathcal{\ }i=1,2.\ $We have :

$(a)\ \left( S_{i}\right) ^{\frac{1}{2}}V_{\varphi _{1}}\in B(\mathcal{H},%
\mathcal{H}^{i}),$ $\left( S_{i}\right) ^{\frac{1}{2}}V_{\varphi _{1}}\left(
\mathcal{E}\right) \subseteq \mathcal{E}^{i};$

$(b)$\ $\left( \psi _{i}\right) _{\left( 1\right) }\left( a\right) =\left.
V_{\varphi _{1}}^{\ast }\left( S_{i}\right) ^{\frac{1}{2}}\pi _{\varphi
_{1}}^{i}\left( a\right) \left( S_{i}\right) ^{\frac{1}{2}}V_{\varphi
_{1}}\right\vert _{\mathcal{D}_{\mathcal{E}}}$,\ \ \ for all $a\in \mathcal{A%
}$, where $\pi _{\varphi _{1}}^{i}\left( a\right) =\left. \pi _{\varphi
_{1}}\left( a\right) \right\vert _{\mathcal{D}_{\mathcal{E}^{i}}};$

$(c)\ \ \left[ \pi _{\varphi _{1}}^{i}\left( \mathcal{A}\right) \left(
S_{i}\right) ^{\frac{1}{2}}V_{\varphi _{1}}\mathcal{H}_{_{\iota }}\right] =%
\left[ \left( S_{i}\right) ^{\frac{1}{2}}\pi _{\varphi _{1}}\left( \mathcal{A%
}\right) V_{\varphi _{1}}\mathcal{H}_{\iota }\right] =$ $\left[ \left(
S_{i}\right) ^{\frac{1}{2}}\mathcal{H}_{\iota }^{\varphi _{1}}\right] =%
\mathcal{H}_{\iota }^{i}$ for all $\iota \in \Upsilon $.

From the above relations and Theorem \ref{s}, we deduce that $\left( \pi
_{\varphi _{1}}^{i},\left( S_{i}\right) ^{\frac{1}{2}}V_{\varphi _{1}},\{%
\mathcal{H}^{i},\mathcal{E}^{i},\mathcal{D}_{\mathcal{E}^{i}}\}\right) $, $%
i=1,2$, is a minimal Stinespring dilation of $\left( \psi _{i}\right)
_{\left( 1\right) },i=1,2$. Then, by Theorem \ref{Help}, there exist
$E_{i}\in $ $\mathcal{CPCC}_{\text{loc,id}_{\mathcal{D}_{\mathcal{E}^{i}}}}(%
\mathcal{B},C^{\ast }(\mathcal{D}_{\mathcal{E}^{i}}))$ such that $%
E_{i}\left( b\right) \in \pi _{\varphi _{1}}\left( \mathcal{A}\right)
^{\prime }\cap C^{\ast }(\mathcal{D}_{\mathcal{E}^{i}})$ and 
\begin{eqnarray*}
\psi _{i}\left( a\otimes b\right)  &=&\left. V_{\varphi _{1}}^{\ast }\left(
S_{i}\right) ^{\frac{1}{2}}E_{i}\left( b\right) \pi _{\varphi
_{1}}^{i}\left( a\right) \left( S_{i}\right) ^{\frac{1}{2}}V_{\varphi
_{1}}\right\vert _{\mathcal{D}_{\mathcal{E}}} \\
&=&\left. V_{\varphi _{1}}^{\ast }\left( S_{i}\right) ^{\frac{1}{2}%
}E_{i}\left( b\right) \left( S_{i}\right) ^{\frac{1}{2}}\pi _{\varphi
_{1}}\left( a\right) V_{\varphi _{1}}\right\vert _{\mathcal{D}_{\mathcal{E}}}
\end{eqnarray*}%
for all $a\in \mathcal{A}$ and for all $b\in b(\mathcal{B)},i=1,2$. 
 Since $\varphi _{2}\ $is extremal, from $\varphi
_{2}=\psi _{\left( 2\right) }=\frac{1}{2}\left( \psi _{1}\right) _{\left(
2\right) }+\frac{1}{2}\left( \psi _{2}\right) _{\left( 2\right) }$, we
deduce that $\left( \psi _{1}\right) _{\left( 2\right) }=\left( \psi
_{2}\right) _{\left( 2\right) }$. Therefore, for each $b\in b(\mathcal{B)},$
\begin{eqnarray*}
0 &=&\left( \psi _{1}\right) _{\left( 2\right) }\left( b\right) -\left( \psi
_{2}\right) _{\left( 2\right) }\left( b\right) \\
&=&\left. V_{\varphi _{1}}^{\ast }\left( \left( S_{1}\right) ^{\frac{1}{2}%
}E_{1}\left( b\right) \left( S_{1}\right) ^{\frac{1}{2}}-\left( S_{2}\right)
^{\frac{1}{2}}E_{2}\left( b\right) \left( S_{2}\right) ^{\frac{1}{2}}\right)
V_{\varphi _{1}}\right\vert _{\mathcal{D}_{\mathcal{E}}}.
\end{eqnarray*}%
From the above relation, \ \ and \ taking\ into\ \ account\ that $\ \left(
S_{1}\right) ^{\frac{1}{2}}E_{1}\left( b\right) \left( S_{1}\right) ^{\frac{1%
}{2}}-$ $\ \left( S_{2}\right) ^{\frac{1}{2}}E_{2}\left( b\right) \left(
S_{2}\right) ^{\frac{1}{2}}\in \pi _{\varphi _{1}}\left( \mathcal{A}\right)
^{\prime }\cap C^{\ast }(\mathcal{D}_{\mathcal{E}})$ $\ $for all $b\in b(%
\mathcal{B})$ and $\varphi _{1}\ $is extremal, we
deduce that $\left( S_{1}\right) ^{\frac{1}{2}}E_{1}\left( b\right) \left(
S_{1}\right) ^{\frac{1}{2}}=\left( S_{2}\right) ^{\frac{1}{2}}E_{2}\left(
b\right) \left( S_{2}\right) ^{\frac{1}{2}}$ for all $b\in b(\mathcal{B)}$ (Proposition \ref{1}).
Consequently, $\psi _{1}\left( a\otimes b\right) =\psi _{2}\left( a\otimes
b\right) $ for all $a\in \mathcal{A}$ and for all $b\in b\left( \mathcal{B}%
\right) ,$ and since $\psi _{1}$ and $\psi _{2}$ are continuous, $\psi _{1}$ $%
=$ $\psi _{2},$ a contradiction. Therefore, $\psi $ is extremal.

$\left( 3\right) \ $Suppose that $\varphi _{1}$ is a local contractive $\ast
$-morphism. Then $P\ $is a projection in $\varphi _{1}\left( \mathcal{A}%
\right) ^{^{\prime }}\cap C^{\ast }(\mathcal{D}_{\mathcal{E}})$. Since $%
\left[ \varphi _{1}\left( \mathcal{A}\right) P\mathcal{H}_{\iota }\right] =P%
\mathcal{H}_{\iota }$ and $\left[ \varphi _{1}\left( \mathcal{A}\right)
\left( \text{id}_{\mathcal{H}}-P\right) \mathcal{H}_{\iota }\right] =\{0\}$
for all $\iota \in \Upsilon ,$ we can suppose that, for each $\iota \in
\Upsilon ,P\mathcal{H}_{\iota }=\mathcal{H}_{\iota }$. Then $\left[ \varphi
_{1}\left( \mathcal{A}\right) \mathcal{H}_{\iota }\right] =\mathcal{H}%
_{\iota }$ for all $\iota \in \Upsilon $ and $\left( \varphi _{1},\text{id}_{%
\mathcal{D}_{\mathcal{E}}},\{\mathcal{H},\mathcal{E},\mathcal{D}_{\mathcal{E}%
}\}\right) $ is a minimal Stinespring dilation of $\varphi _{1}$. Since $%
\varphi _{1}\ $is extremal\ in $\mathcal{CPCC}_{\text{loc,}P}(\mathcal{A}%
,C^{\ast }(\mathcal{D}_{\mathcal{E}}))$, by $(1)$, there exists a unique
joint map $\psi \in \mathcal{CPCC}_{\text{loc,}P}(\mathcal{A\otimes }_{\max }%
\mathcal{B},C^{\ast }(\mathcal{D}_{\mathcal{E}}))$, and by Theorem \ref{Help}%
, there exists a unique $E\in \mathcal{CPCC}_{\text{loc,id}_{\mathcal{D}_{%
\mathcal{E}}}}(\mathcal{B},C^{\ast }(\mathcal{D}_{\mathcal{E}})) $\ such
that $E\left( b\right) \varphi _{1}\left( a\right) =\varphi _{1}\left(
a\right) E\left( b\right) $ for all $b\in \mathcal{B}$ and for all $a\in
\mathcal{A}$, and
\begin{equation*}
\psi \left( a\otimes b\right) =\varphi _{1}\left( a\right) E\left( b\right)
=E\left( b\right) \varphi _{1}\left( a\right)
\end{equation*}%
for all $a\in \mathcal{A}$ and for all $b\in \mathcal{B}$. From the above
relation, we deduce that $\varphi _{2}=E,$ and so $\varphi _{1}$ and $%
\varphi _{2}$ commute.
\end{proof}

\begin{corollary}
Let $\varphi _{1}:\mathcal{A}\rightarrow C^{\ast }(\mathcal{D}_{\mathcal{E}%
}))\ $be a unital local contractive $\ast $-morphism and $\varphi _{2}:%
\mathcal{B}\rightarrow C^{\ast }(\mathcal{D}_{\mathcal{E}})\ $be a unital $%
\mathcal{CPCC}$-map. Then, $\varphi _{1}$ and $\varphi _{2}$ are compatible
if and only if the ranges of $\varphi _{1}$ and $\varphi _{2}\ $commute.
\end{corollary}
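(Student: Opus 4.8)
The plan is to obtain both implications directly from the results already in place, with essentially no fresh computation; the corollary is really just a repackaging of Proposition~\ref{tmax} and of Theorem~\ref{C}$(3)$.

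For the ``if'' direction, assume the ranges of $\varphi_{1}$ and $\varphi_{2}$ commute. Since a local contractive $\ast$-morphism is in particular local completely positive and completely contractive (see Section~2), and $\varphi_{1}$ is unital, we have $\varphi_{1}\in\mathcal{CPCC}_{\text{loc,id}_{\mathcal{D}_{\mathcal{E}}}}(\mathcal{A},C^{\ast}(\mathcal{D}_{\mathcal{E}}))$; by hypothesis $\varphi_{2}\in\mathcal{CPCC}_{\text{loc,id}_{\mathcal{D}_{\mathcal{E}}}}(\mathcal{B},C^{\ast}(\mathcal{D}_{\mathcal{E}}))$ as well. Proposition~\ref{tmax} then yields a unital local $\mathcal{CPCC}$-map $\varphi_{1}\otimes\varphi_{2}\colon\mathcal{A}\otimes_{\max}\mathcal{B}\rightarrow C^{\ast}(\mathcal{D}_{\mathcal{E}})$ with $(\varphi_{1}\otimes\varphi_{2})(a\otimes b)=\varphi_{1}(a)\varphi_{2}(b)$. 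Computing its marginals and using $\varphi_{1}(1_{\mathcal{A}})=\varphi_{2}(1_{\mathcal{B}})=\mathrm{id}_{\mathcal{D}_{\mathcal{E}}}$ gives $(\varphi_{1}\otimes\varphi_{2})_{(1)}(a)=\varphi_{1}(a)\varphi_{2}(1_{\mathcal{B}})=\varphi_{1}(a)$ and, symmetrically, $(\varphi_{1}\otimes\varphi_{2})_{(2)}=\varphi_{2}$. Hence $\varphi_{1}$ and $\varphi_{2}$ are compatible (with $P=\mathrm{id}_{\mathcal{D}_{\mathcal{E}}}$), $\varphi_{1}\otimes\varphi_{2}$ being a joint map.

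For the ``only if'' direction, assume $\varphi_{1}$ and $\varphi_{2}$ are compatible. As just noted, unitality places $\varphi_{1}$ and $\varphi_{2}$ in $\mathcal{CPCC}_{\text{loc,}P}$ with $P=\mathrm{id}_{\mathcal{D}_{\mathcal{E}}}$, and $\varphi_{1}$ is a local contractive $\ast$-morphism, so the hypotheses of Theorem~\ref{C}$(3)$ are met. That theorem gives $\varphi_{1}(a)\varphi_{2}(b)=\varphi_{2}(b)\varphi_{1}(a)$ for all $a\in\mathcal{A}$ and $b\in\mathcal{B}$, i.e.\ the ranges of $\varphi_{1}$ and $\varphi_{2}$ commute.

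There is no real obstacle here: the statement is a corollary precisely in the sense that each half is an immediate citation. The only point that deserves a moment's care is the verification of the standing hypotheses needed to invoke Proposition~\ref{tmax} and Theorem~\ref{C} --- namely that the common value $P$ is $\mathrm{id}_{\mathcal{D}_{\mathcal{E}}}$ (automatic from unitality of $\varphi_{1},\varphi_{2}$) and that ``unital local contractive $\ast$-morphism'' genuinely places $\varphi_{1}$ inside $\mathcal{CPCC}_{\text{loc,id}_{\mathcal{D}_{\mathcal{E}}}}(\mathcal{A},C^{\ast}(\mathcal{D}_{\mathcal{E}}))$, which is exactly the remark recorded after the definition of local $\mathcal{CPCC}$-maps in Section~2.
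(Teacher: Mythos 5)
Your proof is correct and follows exactly the route the paper intends: the paper states this corollary without proof immediately after Theorem \ref{C}, and the evident derivation is the one you give --- the ``only if'' direction is a direct citation of Theorem \ref{C}$(3)$, and the ``if'' direction follows from Proposition \ref{tmax} (equivalently, the remark on $\varphi\otimes\psi$ being a joint map), after noting that a unital local contractive $\ast$-morphism lies in $\mathcal{CPCC}_{\text{loc,id}_{\mathcal{D}_{\mathcal{E}}}}(\mathcal{A},C^{\ast}(\mathcal{D}_{\mathcal{E}}))$ and computing the marginals. Your care in checking that $P=\mathrm{id}_{\mathcal{D}_{\mathcal{E}}}$ is exactly the verification needed, so nothing is missing.
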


\begin{acknowledgement}
The author would like to thank the referee for his/her useful comments.
\end{acknowledgement}

\end{document}